\documentclass[11pt]{article}

\usepackage[a4paper,
            includefoot,
            left=2cm, right=1.5cm, top=2cm, bottom=2cm, headsep=1cm, footskip=1cm]{geometry}
\usepackage[x11names]{xcolor}
\usepackage[unicode=true,pdfusetitle,
 bookmarks=true,bookmarksnumbered=false,bookmarksopen=true,bookmarksopenlevel=1,
 breaklinks=false,pdfborder={0 0 1},backref=false]
 {hyperref}
 \hypersetup{
    colorlinks = false,
    linkbordercolor  = green,
    citebordercolor = red,
 }

\usepackage{lipsum}
\usepackage{bm}
\usepackage{amsmath}
\usepackage{amssymb}
\usepackage{amsthm}
\usepackage{graphicx}
\usepackage{epstopdf}

\ifpdf
  \DeclareGraphicsExtensions{.eps,.pdf,.png,.jpg}
\else
  \DeclareGraphicsExtensions{.eps}
\fi

\usepackage{amsmath}
\numberwithin{equation}{section}

\usepackage{amsopn}
\DeclareMathOperator{\diag}{diag}
\DeclareMathOperator{\mes}{mes}

\usepackage{euscript}
\usepackage{relsize}
\usepackage{mathdots}
\usepackage{graphicx}
\usepackage{indentfirst}
\usepackage{empheq}
\usepackage{multirow}
\usepackage{thmtools}
\usepackage{thm-restate}
\usepackage{tikz}
\usepackage{algorithm}
\usepackage{algpseudocode}

\hyphenation{Struc-tu-red}
\hyphenation{Ran-do-mized}
\hyphenation{Ma-xi-mi-za-tion}
\DeclareMathOperator*{\argmax}{arg\,max}
\DeclareMathOperator*{\argmin}{arg\,min}

\newcommand{\row}[1]{#1 ,\,\boldsymbol{:}\,}
\newcommand{\vecrow}[1]{#1}
\newcommand{\col}[1]{\,\boldsymbol{:}, #1}

\def\rank{\mathop{\mathrm{rank}}}

\def\colspace{\mathop{\mathrm{colspace}}}
\def\rowspace{\mathop{\mathrm{rowspace}}}

\def\Sigminus{\bfW}
\newcommand\winverse[2]{\left(#1 \right)^{\dag}_{#2}}
\newcommand\inverse[1]{#1^{\dag}}
\def\i0{\tau}
\def\Si0{\dot \bfs}
\def\Ai0{\dot \bfa}
\def\Proj{\mathbf{\Pi}}

\def\fullop{H_{\tau}}

\newcommand{\Input}{\hspace*{\algorithmicindent} \textbf{Input}:\ }
\newcommand\upangle[2]{\mathord{<\mspace{-9mu}\mathrel{)}\mspace{2mu}}(#1, #2)}

\newtheorem{proposition}{Proposition}[section]
\newtheorem{theorem}{Theorem}[section]
\newtheorem{remark}{Remark}[section]
\newtheorem{lemma}{Lemma}[section]

\usepackage{euscript}

\newcommand{\spC}{\mathbb{C}}

\newcommand{\spR}{\mathbb{R}}

\newcommand{\spT}{\mathbb{T}}

\newcommand{\tsA}{\mathsf{A}}
\newcommand{\tsB}{\mathsf{B}}
\newcommand{\tsC}{\mathsf{C}}
\newcommand{\tsD}{\mathsf{D}}

\newcommand{\tsN}{\mathsf{N}}

\newcommand{\tsS}{\mathsf{S}}

\newcommand{\tsX}{\mathsf{X}}
\newcommand{\tsY}{\mathsf{Y}}
\newcommand{\tsZ}{\mathsf{Z}}

\newcommand{\bfA}{\mathbf{A}}
\newcommand{\bfB}{\mathbf{B}}
\newcommand{\bfC}{\mathbf{C}}
\newcommand{\bfG}{\mathbf{G}}
\newcommand{\bfL}{\mathbf{L}}
\newcommand{\bfX}{\mathbf{X}}
\newcommand{\bfF}{\mathbf{F}}
\newcommand{\bfY}{\mathbf{Y}}
\newcommand{\bfZ}{\mathbf{Z}}
\newcommand{\bfM}{\mathbf{M}}
\newcommand{\bfP}{\mathbf{P}}
\newcommand{\bfQ}{\mathbf{Q}}
\newcommand{\bfT}{\mathbf{T}}
\newcommand{\bfV}{\mathbf{V}}
\newcommand{\bfW}{\mathbf{W}}
\newcommand{\bfI}{\mathbf{I}}
\newcommand{\bfR}{\mathbf{R}}
\newcommand{\bfS}{\mathbf{S}}
\newcommand{\bfU}{\mathbf{U}}
\newcommand{\bfO}{\mathbf{O}}
\newcommand{\bfzero}{\mathbf{0}}

\newcommand{\bfJ}{\mathbf{J}}

\newcommand{\rmT}{\mathrm{T}}

\newcommand{\calB}{\mathcal{B}}
\newcommand{\calD}{\mathcal{D}}
\newcommand{\calC}{\mathcal{C}}
\newcommand{\calT}{T}
\newcommand{\calF}{\mathcal{F}}
\newcommand{\calJ}{\mathcal{J}}
\newcommand{\calH}{\mathcal{H}}
\newcommand{\calL}{\mathcal{L}}
\newcommand{\calM}{\mathcal{M}}

\newcommand{\calG}{\mathcal{G}}
\newcommand{\calK}{\mathcal{K}}
\newcommand{\calI}{\mathcal{I}}
\newcommand{\calQ}{\mathcal{Q}}
\newcommand{\calS}{\mathcal{S}}

\newcommand{\calW}{\mathcal{W}}
\newcommand{\calZ}{\mathcal{Z}}

\newcommand{\bfa}{\mathbf{a}}
\newcommand{\bfb}{\mathbf{b}}
\newcommand{\bfc}{\mathbf{c}}
\newcommand{\bfd}{\mathbf{d}}
\newcommand{\bfe}{\mathbf{e}}

\newcommand{\bfp}{\mathbf{p}}
\newcommand{\bfq}{\mathbf{q}}

\newcommand{\bfs}{\mathbf{s}}

\newcommand{\bfu}{\mathbf{u}}
\newcommand{\bfv}{\mathbf{v}}

\newcommand{\bfx}{\mathbf{x}}
\newcommand{\bfy}{\mathbf{y}}

\newcommand{\bt}{\begin{theorem}}
\newcommand{\et}{\end{theorem}}
\newcommand{\bl}{\begin{lemma}}
\newcommand{\el}{\end{lemma}}
\newcommand{\bp}{\begin{proposition}}
\newcommand{\ep}{\end{proposition}}
\newcommand{\bc}{\begin{corollary}}
\newcommand{\ec}{\end{corollary}}

\newcommand{\bd}{\begin{definition}\rm}
\newcommand{\ed}{\end{definition}}
\newcommand{\bex}{\begin{example}\rm}
\newcommand{\eex}{\end{example}}
\newcommand{\br}{\begin{remark}\rm}
\newcommand{\er}{\end{remark}}

\newcommand{\btbh}{\begin{table}[!ht]}
\newcommand{\etb}{\end{table}}
\newcommand{\bfgh}{\begin{figure}[!ht]}
\newcommand{\efg}{\end{figure}}

\newcommand{\bea}{\begin{eqnarray*}}
\newcommand{\eea}{\end{eqnarray*}}
\newcommand{\be}{\begin{eqnarray}}
\newcommand{\ee}{\end{eqnarray}}
%

%


\def\sspan{\mathop{\mathrm{span}}}
\def\rank{\mathop{\mathrm{rank}}}
\def\span{\mathop{\mathrm{span}}}

\newcommand{\Arg}{\mathop\mathrm{Arg}}

\makeatletter
\def\adots{\mathinner{\mkern2mu\raise\p@\hbox{.}
\mkern2mu\raise4\p@\hbox{.}\mkern1mu
\raise7\p@\vbox{\kern7\p@\hbox{.}}\mkern1mu}}
\newcommand{\l@abcd}[2]{\hbox to\textwidth{#1\dotfill #2}}
\makeatother

\def\unit{\mathfrak{i}}

\def\code#1{\texttt{#1}}

\begin{document}

\title{Image space projection for low-rank signal estimation:\\ Modified Gauss-Newton method}

\author{Nikita Zvonarev\footnote{{Faculty of Mathematics and Mechanics}, {St.Petersburg State University}, {Universitetskaya nab. 7/9, St.Petersburg, 199034}, {Russia}, nikitazvonarev@gmail.com}, Nina Golyandina\footnote{{Faculty of Mathematics and Mechanics}, {St.Petersburg State University}, {Universitetskaya nab. 7/9, St.Petersburg, 199034}, {Russia}, n.golyandina@spbu.ru, nina@gistatgroup.com}}

\maketitle

\abstract{\small The paper is devoted to the solution of a weighted nonlinear least-squares problem for low-rank signal estimation, which is related to Hankel structured low-rank approximation problems. A modified weighted Gauss-Newton method, which uses projecting on the image space of the signal, is proposed to solve this problem. The advantage of the proposed method is the possibility of its numerically stable and fast implementation. For a weight matrix, which corresponds to an autoregressive process of order $p$, the computational cost of iterations is $O(N r^2 + N p^2 + r N \log N)$, where $N$ is the time series length, $r$ is the rank of the approximating time series. For developing the method, some useful properties of the space of time series of rank $r$ are studied. The method is compared with state-of-the-art methods based on  the  variable projection approach in terms of numerical stability, accuracy and computational cost.
}

\section{Introduction}\label{sec:intro}
In this study we consider  the `signal plus noise' observation scheme:
$$
x_n=s_n+\epsilon_n, \;\;\; n=1,2, \ldots, N.
$$
Denote by
$\tsX = (x_1, \ldots, x_N)^\rmT$ , $\tsS = (s_1, \ldots, s_N)^\rmT$  and $\bm\epsilon = (\epsilon_1, \ldots, \epsilon_N)^\rmT $
the vectors of observations,  signal values and errors respectively.
We will refer to vectors of observations in $\spR^N$ as time series (or shortly series, since the observations are not necessarily
temporal; e.g., they can be spatial).

We assume that the signal $\tsS$ can  be written in the parametric form as a finite sum
    \begin{equation}
    \label{eq:model}
    s_n = \sum_{k=1}^d P_{m_k}(n)\exp(\alpha_k n) \sin(2\pi \omega_k n + \phi_k),
    \end{equation}
 where   $P_{m_k}(n)$ are polynomials in $n$ of degree $m_k$.
 In signal processing applications, the signal in the model  \eqref{eq:model} is usually a sum of sine waves \cite{Cadzow1988}
 or a sum of damped sinusoids \cite{Markovsky2008}.
 The problem of estimating the unknown signal values $s_n$ is as important as the problem of estimating the parameters in the explicit form \eqref{eq:model}.
    Both problems can be solved by the same approach, but we are concentrated on the signal estimation using a different parameterization which is wider than the explicit parameterization given in \eqref{eq:model}.

    Let $\calS\in \spR^N$ be a set, which contains a class of signals in the form \eqref{eq:model} of low complexity (to be defined later).  Consider the weighted least-squares problem (WLS) with a positive definite symmetric weight matrix $\bfW \in \spR^{N \times N}$:
	\begin{equation}
    \label{eq:wls_gen}
	\tsY^\star = \argmin_{\tsY \in \calS} \| \tsX - \tsY \|_{\bfW},
	\end{equation}
    where $\|\tsZ\|_{\bfW}^2 = \tsZ^\rmT \bfW \tsZ$.
 If noise $\bm\epsilon$ is Gaussian with covariance matrix $\bm\Sigma$ and zero mean, the WLS estimate with the weight matrix $\bfW = \bm\Sigma^{-1}$ is the maximum likelihood estimator (MLE). The same is true if the covariance matrix is scaled by a constant.

  Let us consider different approaches for solving  \eqref{eq:wls_gen}.
The chances for success in solving  problems of this kind depend on the  parameterization of the problem.
For the search of parameters in \eqref{eq:model} by the parametric
least-squares method (non-linear parametric regression), one should fix an explicit parametric form of \eqref{eq:model} in $\calS$.
Here we consider another approach to the choice of $\calS$ and its parameterization, based on the so-called signal rank,
which, in a sense, represents the signal complexity; that is, we say about the low complexity of a signal if its rank is not large.

Let us introduce some definitions.
       The rank of a signal $\tsS$  is defined as follows.
    For a given integer $L$ called the window length, we define the embedding operator $\calT_{L}:\; \spR^{N} \to \spR^{L \times (N-L+1)}$,
    which maps $\tsS$ into a Hankel $L\times (N-L+1)$ matrix, by
    	\begin{equation}
\label{eq:embedding}
	\calT_{L}(\tsS) = \begin{pmatrix}
	s_1 & s_2 & \hdots & s_{N-L+1} \\
	s_2 & s_3 & \hdots & \vdots \\
	\vdots & \vdots & \hdots & s_{N-1} \\
	s_{L} & s_{L+1} & \hdots & s_{N}
	\end{pmatrix}.
	\end{equation}
The columns of $\calT_{L}(\tsS)$ are sequential lagged vectors; this is why $\calT_{L}(\tsS)$ is often called the trajectory matrix of $\tsS$.
		We say that the signal $\tsS$ has rank $r < N/2$ if $\rank \calT_{r+1}(\tsS) = r$.
 It is known that $\rank \calT_{r+1}(\tsS) = r$ if and only if $\rank \calT_{L}(\tsS) = r$ for any $L$ such that $\min(L, N-L+1) > r$ (see \cite[Corollary 5.1]{Heinig1984} for the proof).
	
For a sufficiently large time series length $N$, the signal in the form \eqref{eq:model} has rank $r$, which is determined by the parameters $m_k$, $\alpha_k$ and
$\omega_k$
(see Section~\ref{sec:rank_calc} for explaining the correspondence between the form of \eqref{eq:model} and the rank $r$).
For example, the signal with values $s_n$ has rank $r=2$ for a sum of two exponentials $s_n=c_1 \exp(\alpha_1 n) + c_2 \exp(\alpha_2 n)$, a sine wave $s_n=c \sin(2 \pi \omega n + \phi)$, where $0<\omega<0.5$, or a linear function $s_n =a n + b$.

    Let us consider the set $\calS$ in \eqref{eq:wls_gen}, which fixes the rank $r$ but does not fix the form of the signal,
    i.e., the number of terms and degrees of polynomials in \eqref{eq:wls_gen}.
   The model of signals, where the Hankel matrix $\calT_L(\tsS)$ is rank-deficient, is one of the standard models in many areas, signal processing \cite{Cadzow1988,Tufts1993}, speech recognition \cite{Dendrinos1991}, control theory and linear systems \cite{Markovsky2008,Markovsky2019} among others.

    Denote $\calD_r$ the set of series of rank $r$.
	Since the set $\calD_r$ is not closed, we will seek for the solution of \eqref{eq:wls_gen} in its closure, i.e., $\calS=\overline{\calD_r}$. It is well-known that $\overline{\calD_r}$ consists of series of rank not larger than $r$ (this result can be found in \cite[Remark 1.46]{iarrobino1999power} for the complex case; the real-valued case is considered in Section~\ref{sec:closure}).

Thus, in what follows, we study the problem
	\begin{equation}
    \label{eq:wls}
	\tsY^\star = \argmin_{\tsY \in \overline{\calD_r}} \| \tsX - \tsY \|_{\bfW}.
	\end{equation}

Although the common case is the case of a positive definite matrix $\bfW$, the problem \eqref{eq:wls_gen}, where
 $\bfW$ is positive semi-definite, is of considerable interest. For example, the case of a diagonal matrix $\bfW$
 with several zero diagonal elements corresponds to the problem of low-rank approximation for time series with missing values if the noise is white. Let us consider the case of a general weight matrix and time series with missing values.  Let a positive definite matrix $\bfW_0$ be given for the whole time series including gaps. Then the weight matrix $\bfW$ is constructed from $\bfW_0$ by the change of columns and rows with numbers equal to entries of missing values to zero values.
 Note that if $\bfW$ is not positive-definite, then $\|\cdot\|_{\bfW}$ is semi-norm and the problem \eqref{eq:wls_gen} may become ill-posed. In particular, the topology of $\overline{\calD_r}$ is not consistent with the semi-norm and
 therefore the minimum in \eqref{eq:wls_gen} should be changed to infimum, which can be not achieved at time series from $\overline{\calD_r}$.
 	
\textbf{Different approaches for solving  \eqref{eq:wls}.} 
The optimization problem \eqref{eq:wls} is  non-convex  with many local minima \cite{Ottaviani2014lraexact}.
The problem \eqref{eq:wls} is commonly considered as a structured (more precisely, Hankel) low-rank approximation problem (SLRA, HSLRA) \cite{Chu2003,Markovsky2006,Markovsky2019}.
A well-known subspace-based method for solving \eqref{eq:wls}
is called `Cadzow iterations' \cite{Cadzow1988} and belongs to the class
of alternating-projection methods.
The method of Cadzow iterations can be extended to a class of oblique Cadzow iterations in the norm,
which differs from the Euclidean norm \cite{Gillard2016}.
The method has two drawbacks: first, the properties of the limiting point of the Cadzow iterations are unknown \cite{Andersson2013}
and second, it tries to solve the problem \eqref{eq:wls} with a weight matrix which generally differs
from the given $\Sigminus$.
Therefore, it is not optimal (the method does not provide the MLE), even for the case of white Gaussian noise \cite{DeMoor1994}.
The reason is that the problems are commonly stated in SLRA as matrix approximation problems, while the original problem
\eqref{eq:wls} is stated in terms of time series.

Many methods have been proposed to solve  HSLRA, including the Riemannian SVD \cite{DeMoor1994}, Structured total least-norm \cite{Lemmerling2000}, Newton-like iterations \cite{Schost2014}, proximal iterations \cite{Condat.Hirabayashi2015}, symbolic computations \cite{Ottaviani2014lraexact}, stochastic optimization \cite{Gillard2013}, fixed point iterations \cite{Andersson2013}, a penalization approach \cite{Ishteva.etal2014}.

Since we consider the problem of WLS time series approximation, which generally differs from the problem of matrix approximation due to different weights (see e.g. \cite{Zvonarev.Golyandina2017}), let us use as a benchmark the effective and general approach of Markovsky and Usevich \cite{Usevich2012,Usevich2014}, which is based on the variable projection principle \cite{Golub.Pereyr2003} combined with the Gauss-Newton method for solving the arising optimization subproblem. The method from \cite{Usevich2012,Usevich2014} is able to deal with the problem in the form \eqref{eq:wls}, i.e., exactly with the given weight matrix; moreover, it is elaborated in general form for a wide class of structured matrices and at the same time its iteration complexity scales linearly with the length of data for a class of weight matrices. Thus, the method can be considered as a start-of-art method of low-rank time series approximation.
Nevertheless, the approach has a couple of disadvantages. First, the Cholesky factorization is used for solving least-squares subproblems to obtain a fast algorithm; unfortunately, this squares the condition number (more stable  decompositions like QR factorization are slower). Then, the method is efficient only if the inverse of the weight matrix is banded.
Note that the approach of Markovsky and Usevich can be applied to the case of rank-deficient matrices $\bfW$ in \cite{Markovsky2013missing} and \cite[Section 4.4]{Markovsky2019}. However, it is not clear how to implement the proposed algorithm effectively from the viewpoint of computational cost.

\textbf{The proposed approach.} Let us consider another approach to solving the problem \eqref{eq:wls}, which is similar but different.
It is discussed in Section~\ref{sec:lrr} that each time series $\tsS$ from $\overline\calD_r$ is characterized by a vector $\bfa\in \spR^{r+1}$, which provides the coefficients of a generalized linear recurrence relation (GLRR) governing the time series, i.e. $\bfa^\rmT \calT_{r+1}(\tsS)$ is the zero vector. For each $\bfa$, we can consider the space $\calZ(\bfa)$ of signals governed by the GLRR with the given coefficients. Algorithms that use the variable projection method for solving the problem \eqref{eq:wls} include the projection to $\calZ(\bfa)$ as a subproblem. We call $\calZ(\bfa)$ the image space and its orthogonal complement $\calQ(\bfa)$ the left-null space, see Section~\ref{subsec:subspace_approach} for notation.

In this paper, we propose to overcome the drawbacks of the method from \cite{Usevich2012,Usevich2014} in the following manner. First, we consider a modified Gauss-Newton iteration method by using a special parameterization of the problem; this modification helps to avoid computing the pseudoinverse of the Jacobian matrix (compare \eqref{eq:gauss_simple} and \eqref{eq:iterGNfinal}). Then, unlike \cite{Usevich2014}, the projection is calculated directly on the image space $\calZ(\bfa)$ and is not obtained through projecting on the space $\calQ(\bfa)$. This feature of the proposed algorithm is emphasized in the paper title. Finally, for calculating the projection, we use fast algorithms with improved numerical stability (the compensated Horner scheme, see Section~\ref{subsec:hornerscheme}).
As a result, the proposed method can be just slightly slower and is much faster in many real-life scenarios, but also is more stable (see Section~\ref{sec:comparison} with the comparison results).
Moreover, the algorithm with direct projections to the image space $\calZ(\bfa)$ can be extended to the case of a degenerate weight matrix $\bfW$
 (in particular, to the case of missing values) without loss of effectiveness (see Remark~\ref{rem:wlsseminorm} to the algorithms); compare with that in \cite{Markovsky2013missing}, where projections to the subspace $\calQ(\bfa)$ are used and the computational cost considerably increases for degenerate weight matrices.

We also study some other properties of the problem including properties of $\calD_r$ in the considered parameterization.
The obtained results can be useful beyond the scope of this paper. In particular,
the induced parametric form of the tangent subspace at a given point of $\calD_r$ can be useful for investigating the local properties of the problem solution. Also, the effective algorithm for calculating the projection to the subspace of series governed by a specific linear recurrence relation,
which is proposed in Section~\ref{sec:ZofA},
can be used in different algorithms within HSLRA.

\textbf{Comments to general terminology}. 
Let us explain the terminology, which we use. For each window length $L$, there is a one-to-one correspondence between a time series $\tsX_N$ of length $N$ and its $L$-trajectory matrix $\calT_L(\tsX_N)$ (we use this name taken from singular spectrum analysis) in $\spR^{L\times (N-L+1)}$. Different window lengths $L$ correspond to different (unweighted) matrix approximations. Therefore, the low-rank matrix approximations can be varied for different $L$. The notion of low-rank signals does not depend on $L$ and therefore is not related to matrices (generally speaking). Moreover, the solved problem \eqref{eq:wls} is stated in terms of time series, not in terms of matrices. For approximation by low-rank signals, $N$ weights are set for time series points, not for matrix entries. That is why we use the notion ``low-rank signals''.

Note that to have equivalent optimization problems for the matrix SLRA itself and for time series (vector) LRA, one should consider weighted versions and care about the correspondence of weights.
In \cite{Zvonarev.Golyandina2017} (see also a general description in \cite[Section 3.4]{Golyandina.etal2018}), the problem is solved as a matrix approximation problem with appropriate weights. In this paper, we consider the problem of time series low-rank approximation \eqref{eq:wls}.

\textbf{Structure of the paper}.
In  Section~\ref{sec:parametrization} we consider a parameterization of $\calD_r$ and its properties, which help to
construct effective algorithms.
In Section~\ref{sec:optim} we describe the known (VPGN) and the new proposed (MGN) iterative methods
for solving the optimization problem~\eqref{eq:wls}. The algorithm VPGN is described in the way different from
that in \cite{Usevich2014}, since the description in \cite{Usevich2014} is performed for general SLRA problems
and therefore it is difficult to apply it to the particular case of Hankel SLRA for time series.
(For the convenience of readers, we include Table~\ref{table:defines} containing equivalent notations.)
In Section~\ref{sec:ZofA} we propose effective algorithms for implementing the key steps of the main algorithms.
Section~\ref{sec:MGNand VPGN} presents the algorithms with the implementations of VPGN and MGN.
In Section~\ref{sec:comparison} we compare computational costs and numerical stability of the VPGN and MGN algorithms.
Section~\ref{sec:conclusion} concludes the paper.
Long proofs and technical details are relegated to the appendix.

\textbf{Main notation}.
In this paper, we use lowercase letters ($a$,$b$,\ldots) and also $L$, $K$, $M$, $N$ for scalars, bold lowercase letters ($\bfa$,$\bfb$,\ldots) for vectors, bold uppercase letters ($\bfA$,$\bfB$,\ldots) for matrices, and
the calligraphic font for sets. Formally, time series are vectors; however, we use the uppercase sans serif font ($\tsA$,$\tsB$,\ldots) for time series to distinguish them from ordinary vectors.
Additionally, $\bfI_{M}\in \spR^{M\times M}$ is the identity matrix, $\bm{0}_{M \times k}$ denotes the $M \times k$ zero matrix, $\bm{0}_M$ denotes the zero vector in $\spR^M$,
$\bfe_i$ is the $i$-th standard basis vector.

Denote $\bfb_{\vecrow{\calC}}$ the vector consisting of
the elements of a vector $\bfb$ with the numbers from a set $\calC$,
For matrices, denote $\bfB_{\row{\calC}}$ the matrix consisting
of rows of a matrix $\bfB$ with the numbers from $\calC$ and $\bfB_{\col{\calC}}$
the matrix consisting
of columns of a matrix $\bfB$ with the numbers from $\calC$.

Finally, we put a brief list of main common symbols and acronyms.\\
LRR is linear recurrence relation.\\
GLRR($\bfa$) is generalized LRR with the coefficients given by $\bfa$.\\
$\calD_r$ is the set of time series of rank $r$.\\
$\overline{\calD_r}$ is the set of time series of rank not larger than $r$.\\
$\calZ(\bfa) \in \spR^N$ is the set of time series of length $N$ governed by the minimal GLRR($\bfa$);
$\bfZ(\bfa)$ is the matrix consisting of its basis vectors.\\
$\calQ(\bfa)$ is the orthogonal complement to $\calZ(\bfa)$; $\bfQ(\bfa)$ is the matrix consisting of its special basis vectors in the form \eqref{op:Q}.\\
$\bfW\in \spR^{N\times N}$ is a weight matrix.\\
$\winverse{\bfF}{\bfW}$ is the weighted pseudoinverse matrix; $\inverse{\bfF}$ stands for $\winverse{\bfF}{\bfI_N}$.\\
$\bfJ_{S}$ is the Jacobian matrix of a map $S$.\\
$\calT_M: \spR^N \rightarrow \spR^{M\times (N-M+1)}$ is the embedding operator, which constructs the $M$-trajectory matrix.\\
$\fullop$: $\spR^{r} \to \spR^{r+1}$ is the operator, which inserts $-1$ at the $\tau$ position.\\
$\Proj_{\calL,\bfW}$ is the $\bfW$-orthogonal projection onto $\calL$, $\Proj_{\bfL,\bfW}$ is the $\bfW$-orthogonal projection onto $\colspace(\bfL)$; if $\bfW$ is the identity matrix, it is omitted in the notation.\\
$S_{\tau}^\star(\Ai0) = \Proj_{\calZ(\fullop(\Ai0)), \bfW}(\tsX)$, where $\Ai0\in \spR^r$.

	\section{Parameterization of low-rank series}
    \label{sec:parametrization}
    \subsection{Linear recurrence relations}
    \label{sec:lrr}

    It is well known \cite[Theorem 3.1.1]{Hall1998} that
    a time series of the form \eqref{eq:model} satisfies a linear recurrence relation (LRR) of some order $m$:
    \begin{equation}
    \label{eq:lrr}
    s_n = \sum_{k=1}^m b_k s_{n-k}, n = m+1, \ldots, N; b_m\neq 0.
    \end{equation}
    One time series can be governed by many different LRRs. The LRR of minimal order $r$ (it is unique) is called minimal.
    The corresponding time series has rank $r$. The minimal LRR uniquely defines the form of \eqref{eq:model} and the parameters $m_k$, $\alpha_k$, $\omega_k$.

    The relations \eqref{eq:lrr} can be expressed in vector form as
    $\bfa^\rmT \calT_{m+1}(\tsS) = \bfzero_{N-m}^\rmT$, where the vector $\bfa = (b_m, \ldots, b_1, -1)^\rmT \in \spR^{m+1}$.
    The vector $\bfa$ corresponding to the minimal LRR ($m=r+1$) and the first $r$  values of the series $\tsS$ uniquely determine the whole series  $\tsS$.
    Therefore, $r$ coefficients of an LRR of order $r$ and $r$ initial values
    ($2r$ parameters altogether) can be chosen as parameters of a series of rank $r$.
    However, this parameterization does not describe the whole set $\calD_r$ \cite[Theorem 5.1]{Golyandina.etal2001}.

Let us generalize LRRs.
We say that a time series satisfies a generalized LRR (GLRR) of order $m$ if $\bfa^\rmT \calT_{m+1}(\tsS) = \bfzero_{N-m}^\rmT$ for some non-zero $\bfa \in \spR^{m+1}$;
we call this linear relation GLRR($\bfa$).
As well as for LRRs, the minimal GLRR can be introduced.
The difference between a GLRR  and an ordinary LRR is that the last coefficient in the GLRR is not necessarily non-zero and
therefore the GLRR does not necessarily set a recurrence.
However, at least one of the coefficients of the GLRR should be non-zero.
GLRRs correspond exactly to the first characteristic polynomial in \cite[Definition 5.4]{Heinig1984}.

Let us demonstrate the difference between LRR and GLRR by an example. Let $\tsS = (s_1,\ldots,s_N)^\rmT$ be a signal and $\bfa = (a_1, a_2, a_3)^\rmT$.
Then GLRR($\bfa$) and LRR($\bfa$) mean the same:
$a_1 s_i + a_2 s_{i+1} + a_3 s_{i+2} = 0$ for $i=1,\ldots,N-2$. For LRR($\bfa$), we state that $a_3  = -1$ (or just not equal to 0). Then this linear relation becomes a recurrence relation since $s_{i+2} = a_1 s_i + a_2 s_{i+1}$.
For GLRR($\bfa$), we assume that some of $a_i$ is not zero (or equal to $-1$). It may be $a_1$ or $a_2$ or $a_3$.

Any signal of rank $r$ satisfies a GLRR($\bfa$), where $\bfa\in \spR^{r+1}$. However, not each signal of rank $r$ corresponds to an LRR. E.g., $\tsS=(1,1,1,1,1,2)^\rmT$ has rank 2 and does not satisfy an LRR. However, it satisfies the GLRR($\bfa$) with $\bfa = (1,-1,0)^\rmT$. Therefore, we consider the parameterization with the help of GLRR($\bfa$). In fact, the same approach is used in \cite{Usevich2012, Usevich2014}. It is indicated in Table~\ref{table:defines} that $\bfa$ in this paper corresponds to $R$ in \cite{Usevich2012, Usevich2014}.

The following properties clarify the structure of the spaces $\calD_r$ and $\overline{\calD_r}$:
(a)
$\overline{\calD_r} = \{\tsY: \exists \bfa\in \spR^{r+1}, \bfa\neq \bfzero_{r+1}: \bfa^\rmT \calT_{r+1}(\tsS) = \bfzero_{N-r}^\rmT\}$ or, equivalently,
$\tsY \in \overline{\calD_r}$ if and only if there exists a GLRR($\bfa$) of order $r$, which governs $\tsY$;
(b)
$\tsY \in \calD_r$ if and only if there exists a GLRR($\bfa$) of order $r$, which governs $\tsY$, and this GLRR is minimal.

\subsection{Subspace approach}
\label{subsec:subspace_approach}
	Let  $\calZ(\bfa)$, $\bfa\in \spR^{r+1}$, be the space of time series of length $N$ governed by the GLRR($\bfa$);
that is, $\calZ(\bfa) = \{\tsS: \bfa^\rmT \calT_{r+1}(\tsS) = \bfzero_{N-r}^\rmT \}$.
Therefore $\overline{\calD_r} = \bigcup \limits_{\bfa} \calZ(\bfa)$.

Let  $\bfQ^{M, d}$ be the operator $\spR^{d+1} \to \spR^{M \times (M - d)}$, which is defined  by
\begin{equation}\label{op:Q}
\big(\bfQ^{M,d}(\bfb)\big)^\mathrm{T} = \begin{pmatrix}
b_1 & b_2 & \dots & \dots & b_{d+1} & 0 & \dots & 0 \\
0 & b_1 & b_2 & \dots &  \dots & b_{d+1} & \ddots & \vdots \\
\vdots & \ddots  & \ddots & \ddots & \ddots & \ddots & \ddots & 0 \\
0 & \dots & 0 & b_1 & b_2 & \ddots & \ddots & b_{d+1} \\
\end{pmatrix},
	\end{equation}
where $\bfb=(b_1, \ldots, b_{d+1})^\rmT \in \spR^{d+1}$.
Then the other convenient form of $\calZ(\bfa)$ is $\calZ(\bfa) = \{\tsS: \bfQ^\rmT(\bfa) \tsS = \bfzero_{N-r} \}$,
where $\bfQ = \bfQ^{N,r}$. 

The following notation will be used below: $\calQ(\bfa) = \colspace(\bfQ(\bfa))$ and denote $\bfZ(\bfa)$ a matrix whose column vectors form a basis of $\calZ(\bfa)$. The space $\calZ(\bfa)$ is the image space of $\bfZ(\bfa)$ for any choice of the basis, while $\calQ(\bfa)$ is the left-null space of $\bfZ(\bfa)$ (or, the same, the kernel of $\bfZ(\bfa)^\rmT$); $\calQ(\bfa)$ is the orthogonal complement to $\calZ(\bfa)$.

\subsection{Parameterization}
\label{sec:param}
Consider a series $\tsS_0\in \calD_r$, which satisfies a minimal GLRR($\bfa_0$) of order $r$ defined by a non-zero vector $\bfa_0 = (a_1^{(0)}, \ldots, $ $a_{r+1}^{(0)})^\rmT$.
Let us fix $\i0$ such that $a^{(0)}_{\i0} \neq 0$. Since GLRR($\bfa_0$) is invariant to multiplication by a constant,
we assume that $a^{(0)}_{\i0} = -1$. This condition on $\i0$ is considered to be valid hereinafter.
Let us build a parameterization of $\calD_r$ in the vicinity of $\tsS_0$; parameterization depends on the index $\i0$.
Note that we can not construct a global parameterization, since for different points of $\calD_r$ the index $\i0$,
which corresponds to a non-zero element of $\bfa_0$, can differ.

In the case of a series governed by an ordinary LRR($\bfa$), $\bfa\in \spR^{r+1}$, since the last coordinate of $\bfa$ is equal to $-1$,
the series is uniquely determined by the first $r$ elements of $\bfa$ and $r$  initial values of the series.
Then, applying the LRR to the initial data, which are taken from the series that is governed by the LRR,
we restore this series.

In the case of an arbitrary series from $\calD_r$, the approach is similar but a bit more complicated. For example,
we should take the boundary data ($\i0-1$ values at the beginning, and $r+1-\i0$ values at the end) instead of the $r$ initial values at the beginning of the series; also, the GLRR is not in fact recurrent (we keep notation
to show that LRRs are a particular case of GLRRs).

Denote $\calI(\i0) = \{1,\ldots, N\} \setminus \{\i0,\ldots, N-r-1+\i0\}$ and
$\calK(\i0) = \{1,\ldots,r+1\} \setminus \{\i0\}$ two sets of size $r$.
The set $\calI(\i0)$ consists of the numbers of series values (we call them boundary data), which
are enough to find all the series values with the help of $\bfa$ (more precisely,
by elements of $\bfa$ with numbers from $\calK(\i0)$).
Then $\bfa_{\vecrow{\calK({\i0})} }\in \spR^{r}$
defines the vector consisting of the elements of a vector $\bfa \in \spR^{r+1}$
with the numbers from $\calK({\i0})$.

To simplify notation, let us introduce the operator $\fullop$: $\spR^{r} \to \spR^{r+1}$, which acts as follows. Let $\Ai0\in \spR^{r}$ and $\fullop(\Ai0) = \bfa$. Then $\bfa = (a_1, \ldots, $ $a_{r+1})^\rmT$ is such that $\bfa_{\vecrow{\calK({\i0})}} = \Ai0$ and $a_{\i0} = -1$; that is, $\Ai0 \in \spR^r$ is extended to $\bfa \in \spR^{r+1}$ by inserting $-1$ at the $\i0$-th position.
In this notation, $\bfa_{\vecrow{\calK({\i0})}} = \fullop^{-1}(\bfa)$.

Theorem~\ref{th:parametrization} defines the parameterization, which will be used in what follows.
The explicit form of this parameterization is given in Proposition~\ref{prop:parametrization}.

\begin{theorem} \label{th:parametrization}
	Let $\bfa_0\in \spR^{r+1}$, $a_\i0^{(0)} = -1$, and $\tsS_0 \in \calD_r$ satisfy the GLRR($\bfa_0$). Then there exists a unique one-to-one mapping $S_{\tau}: \spR^{2r} \to \calD_r$ between a neighborhood of the point $\left((\tsS_0)_{\calI(\i0)}, (\bfa_0)_{\calK(\i0)}\right)^\rmT \in \spR^{2r}$ and the intersection of a neighborhood of $\tsS_0$ with the set $\calD_r$, which satisfies the following relations:
for $\tsS = S_{\tau}(\Si0, \Ai0)$, where $\Si0, \Ai0 \in \spR^r$, we have
\begin{itemize}
\item
 $(\tsS)_{\vecrow{\calI(\i0)}} = \Si0$;
 \item
 $\tsS \in \calD_r$ is governed by the GLRR($\fullop(\Ai0)$).
 \end{itemize}
\end{theorem}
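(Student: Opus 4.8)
The plan is to produce the inverse of $S_\tau$ explicitly and then read off the correspondence. First I would introduce the forward map $\Phi$ that sends a series $\tsS\in\calD_r$ near $\tsS_0$ to the pair $\big((\tsS)_{\calI(\i0)},\,\fullop^{-1}(\bfa)\big)\in\spR^{2r}$, where $\bfa$ is the minimal GLRR of $\tsS$ normalized by $a_{\i0}=-1$ (for $\tsS_0$ this is $\bfa_0$ itself, which is minimal since a rank-$r$ series has a one-dimensional space of order-$r$ GLRRs). Well-definedness of $\Phi$ is routine: since $\rank\calT_{r+1}(\tsS)=r$, the left null space of $\calT_{r+1}(\tsS)$ is one-dimensional, so the minimal GLRR vector is unique up to a scalar; as $a^{(0)}_{\i0}\neq0$ and the entries of this vector depend smoothly on $\tsS$, its $\i0$-th coordinate stays nonzero on a neighborhood of $\tsS_0$, so the normalization $a_{\i0}=-1$ is available there and $\Phi$ is smooth.

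I would then build a candidate inverse. Given $(\Si0,\Ai0)$ near the base point, set $\bfa=\fullop(\Ai0)$ and look for $\tsS\in\calZ(\bfa)$ with $(\tsS)_{\calI(\i0)}=\Si0$. Writing $\calJ=\{\i0,\ldots,N-r-1+\i0\}$ for the complement of $\calI(\i0)$ and splitting the columns of $\bfQ(\bfa)^{\rmT}$ accordingly, the relation $\bfQ(\bfa)^{\rmT}\tsS=\bfzero_{N-r}$ becomes the square linear system $\bfP(\bfa)\,(\tsS)_{\calJ}=-\bfQ(\bfa)^{\rmT}_{\col{\calI(\i0)}}\Si0$, where $\bfP(\bfa)=\bfQ(\bfa)^{\rmT}_{\col{\calJ}}\in\spR^{(N-r)\times(N-r)}$ carries the diagonal value $a_{\i0}=-1$. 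Assuming $\bfP(\bfa_0)$ is invertible, continuity of the determinant makes $\bfP(\bfa)$ invertible for $\bfa$ near $\bfa_0$, so $(\tsS)_{\calJ}$ is determined uniquely and smoothly; together with $\Si0$ this yields a series $\tsS=S_\tau(\Si0,\Ai0)$ governed by $\mathrm{GLRR}(\bfa)$ with the prescribed boundary data. It remains to check $\tsS\in\calD_r$ and not merely $\overline{\calD_r}$: since $\calT_{r}(\tsS_0)$ has full row rank $r$, lower semicontinuity of rank gives $\rank\calT_{r}(\tsS)=r$ nearby, so $\tsS$ satisfies no GLRR of order $r-1$ and hence has rank exactly $r$. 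That this construction and $\Phi$ are mutually inverse is then immediate, and smoothness of both sides gives the asserted one-to-one correspondence.

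The heart of the matter, and the step I expect to be hardest, is the nonsingularity of the interior block $\bfP(\bfa_0)$. Because $\bfQ(\bfa_0)^{\rmT}$ has full row rank $N-r$ (its leftmost nonzero coefficient furnishes distinct pivots), $\dim\calZ(\bfa_0)=r$, so invertibility of $\bfP(\bfa_0)$ is equivalent to injectivity — hence, by equality of dimensions, bijectivity — of the boundary-evaluation map $\calZ(\bfa_0)\to\spR^{r}$, $\tsS\mapsto(\tsS)_{\calI(\i0)}$. For $\i0=1$ and $\i0=r+1$ this is transparent: the normalized relation is then a genuine one-sided recurrence, so a series of $\calZ(\bfa_0)$ whose $r$ terminal (respectively initial) entries vanish must vanish identically, and $\bfP(\bfa_0)$ is triangular with $-1$ on the diagonal. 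For an interior index the relation is two-sided, the system is a discrete boundary-value problem and $\bfP(\bfa_0)$ is merely banded; establishing that a nonzero series governed by the minimal $\mathrm{GLRR}(\bfa_0)$ cannot have vanishing boundary data on $\calI(\i0)$ is exactly where the minimality of $\bfa_0$ (equivalently, the explicit reconstruction of Proposition~\ref{prop:parametrization}) must be used, and I would isolate it as a separate lemma. Once that nonsingularity is in hand, the dimension count above turns the remaining verifications into formalities.
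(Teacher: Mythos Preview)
Your approach is essentially the paper's: construct $S_\tau$ by solving the linear constraint $\bfQ(\bfa)^\rmT\tsS=\bfzero_{N-r}$ for the interior block given boundary data, construct the inverse by reading off boundary data and the normalized null vector of $\calT_{r+1}(\tsS)$, and check the two compositions. The paper phrases the key nonsingularity step dually---it works with a basis $\bfZ$ of $\calZ(\bfa)$ and shows $\bfZ_{\row{\calI(\i0)}}$ is invertible---but this is exactly equivalent to invertibility of your $\bfP(\bfa_0)$, since both say the boundary-evaluation map $\calZ(\bfa_0)\to\spR^r$ is a bijection.

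One point of guidance on the lemma you defer. Your instinct that ``minimality of $\bfa_0$'' is what unlocks the interior-$\i0$ case is slightly off target; the paper's argument for this step uses only $a^{(0)}_\i0\neq 0$ (minimality enters elsewhere, to keep the image inside $\calD_r$ rather than $\overline{\calD_r}$). The paper strips the leading $r_b$ and trailing $r_e$ zeros from $\bfa_0$, writing $\bfa_0=(0,\ldots,0,b_{r_m+1},\ldots,b_1,0,\ldots,0)^\rmT$ with $b_1,b_{r_m+1}\neq 0$, and builds a block basis $\bfZ_0^\star=[\bfZ_{\text{begin}}:\bfZ_{\text{middle}}:\bfZ_{\text{end}}]$ of $\calZ(\bfa_0)$ in which the outer blocks are standard unit vectors. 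Because $\calI(\i0)$ always contains $\{1,\ldots,r_b\}\cup\{N-r_e+1,\ldots,N\}$, nonsingularity of $(\bfZ_0^\star)_{\row{\calI(\i0)}}$ reduces to nonsingularity of the $r_m\times r_m$ submatrix of $\widehat\bfZ_{\text{middle}}$ formed by its first $\i0-1-r_b$ and last $r_m-(\i0-1-r_b)$ rows. That is still a two-sided boundary problem, but now for a \emph{genuine} LRR (both extreme coefficients nonzero), and the paper closes it by citing \cite[Prop.~2.3]{Usevich2010}. So when you isolate your lemma, take $a^{(0)}_\i0\neq 0$ as the hypothesis and reduce to the two-sided LRR case, rather than trying to invoke minimality directly.
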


\begin{proposition} \label{prop:parametrization}
Let $\bfa_0\in \spR^{r+1}$, $a_\i0^{(0)} = -1$, and $\bfZ_0 \in \spR^{N \times r}$ consist of basis vectors of $\calZ(\bfa_0)$. Consider the parameterizing mapping $S_\tau$, introduced in Theorem~\ref{th:parametrization}.\\
1. Let $(\Si0, \Ai0)^\rmT \in \spR^{2r}$ and denote $\bfa = \fullop(\Ai0)$.
 Denote $\Proj_{\calZ(\bfa)}$ the orthogonal projection onto $\calZ(\bfa)$. Then for $\bfZ = \Proj_{\calZ(\bfa)} \bfZ_0\in \spR^{N\times r}$ and $\bfG = \bfZ \left(\bfZ_{\row{\calI({\i0})}}\right)^{-1}$, where $\bfZ_{\row{\calI({\i0})}}\in \spR^{r\times r}$, the mapping $S_\tau$ has the explicit form
		\begin{equation}\label{eq:param}
		\tsS = S_\tau(\Si0, \Ai0) = \bfG \Si0.
		\end{equation}
2. The inverse of the mapping $S_\tau$ is given as follows. Let $\tsS = S_\tau(\Si0, \Ai0)$. Then
\begin{equation}
         \label{eq:param_rev}
        \Si0 = (\tsS)_{\vecrow{\calI(\i0)}},\qquad
		\Ai0 = (-\hat{\bfa}/\hat{a}_\i0)_{\vecrow{\calK(\i0)}},
\end{equation}
		where $\hat{\bfa} = \hat{\bfa}(\tsS) = (\hat a_1, \ldots, \hat a_{r+1})^\rmT = \left(\bfI_{r+1} -  \Proj_{\calL(\tsS)}\right) \bfa_0$, $\calL(\tsS) = \colspace\left(\calT_{r+1}(\tsS)\right)$, $\Proj_{\calL(\tsS)}$ is the orthogonal projection onto $\calL(\tsS)$.
\end{proposition}

\begin{proof}
See the proof of Theorem~\ref{th:parametrization} together with Proposition~\ref{prop:parametrization} in Section~\ref{sec:th:parametrization}.
\end{proof}

Note that for different series $\tsS_0\in \calD_r$ we have different parameterizations of $\calD_r$ in vicinities of $\tsS_0$.
Moreover, for a fixed $\tsS_0$, there is a variety of parameterizations provided by different choices of the index $\i0$.

\subsection{Smoothness of parameterization and derivatives}
\begin{theorem}
	\label{th:param_smooth}
Let $\bfa_0\in \spR^{r+1}$, $a_\i0^{(0)} = -1$, and $\tsS_0 \in \calD_r$ satisfy the GLRR($\bfa_0$). Then the parameterization $S_{\tau}(\Si0, \Ai0)$, which is introduced in Theorem~\ref{th:parametrization} and Proposition~\ref{prop:parametrization},
 is a smooth diffeomorphism
 between a neighborhood of the point $\left((\tsS_0)_{\calI(\i0)}, (\bfa_0)_{\calK(\i0)}\right)^\rmT \in \spR^{2r}$ and the intersection of a neighborhood of $\tsS_0$ with the set $\calD_r$.
\end{theorem}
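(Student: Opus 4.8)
The plan is to build on Theorem~\ref{th:parametrization}, which already supplies the one-to-one correspondence, and to reduce the claim to verifying that both $S_\tau$ and its inverse are $C^\infty$; a smooth bijection whose inverse is also smooth is a diffeomorphism. In each direction the reconstruction is governed by a square linear system whose coefficient matrix depends affinely on the variables involved and is invertible at the base point, so that Cramer's rule yields rational, hence $C^\infty$, dependence on a neighborhood.

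First I would establish smoothness of $S_\tau$. By Proposition~\ref{prop:parametrization} the boundary coordinates are set to $(\tsS)_{\calI(\i0)} = \Si0$, while the remaining $N-r$ coordinates are reconstructed from GLRR$(\fullop(\Ai0))$, i.e. from $\bfQ(\fullop(\Ai0))^\rmT\tsS = \bfzero_{N-r}$; splitting off the boundary part, these coordinates solve a square $(N-r)\times(N-r)$ linear system whose matrix is the interior column block of $\bfQ(\fullop(\Ai0))^\rmT$ and depends affinely on $\Ai0$. That this block is invertible at the base point is exactly the well-posedness of the reconstruction underlying Theorem~\ref{th:parametrization}; since its determinant is a polynomial in $\Ai0$, invertibility persists on a neighborhood. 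By Cramer's rule the reconstructed coordinates are then rational in $\Ai0$ with nonvanishing denominator and linear in $\Si0$, so $S_\tau$ is $C^\infty$.

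For the inverse map I would recover $\Si0 = (\tsS)_{\calI(\i0)}$ by a coordinate projection, which is linear, and recover $\Ai0$ from the minimal GLRR of $\tsS$. Since $\tsS \in \calD_r$ has $\rank \calT_{r+1}(\tsS) = r$, its minimal GLRR $\bfa = \fullop(\Ai0)$ spans the one-dimensional left nullspace and satisfies $\calT_{r+1}(\tsS)^\rmT \bfa = \bfzero_{N-r}$; imposing $a_{\i0} = -1$ turns this into the linear system $\bigl(\calT_{r+1}(\tsS)^\rmT\bigr)_{\col{\calK(\i0)}}\, \Ai0 = \bigl(\calT_{r+1}(\tsS)^\rmT\bigr)_{\col{\i0}}$ for the $r$ unknowns $\Ai0$. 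Because $a^{(0)}_{\i0} = -1 \neq 0$, removing the $\i0$-th column from the rank-$r$ matrix $\calT_{r+1}(\tsS_0)^\rmT$ leaves full column rank, so $\bigl(\calT_{r+1}(\tsS_0)^\rmT\bigr)_{\col{\calK(\i0)}}$ has rank $r$ and some $r$ of its rows form an invertible $r\times r$ block at $\tsS_0$, hence on a full ambient neighborhood $U \subset \spR^N$. Solving that square subsystem by Cramer's rule defines a $C^\infty$ map $\Phi\colon U \to \spR^{2r}$; on $\calD_r \cap U$ the true coefficient vector solves the chosen invertible subsystem and is therefore its unique solution, so $\Phi$ restricts there to $S_\tau^{-1}$, which is thus the restriction of a smooth map and so is itself $C^\infty$.

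Putting the two directions together, $S_\tau$ is a smooth bijection with smooth inverse, hence a diffeomorphism; differentiating the identity $\Phi \circ S_\tau = \mathrm{id}$ shows in passing that $\bfJ_{S_\tau}$ has full rank $2r$, so $S_\tau$ is an immersion and exhibits $\calD_r$ as a $2r$-dimensional embedded submanifold near $\tsS_0$. The main obstacle is the inverse direction: one must know that the minimal GLRR coefficients depend smoothly on $\tsS$ as $\tsS$ varies within $\calD_r$, which rests on the rank of $\calT_{r+1}(\tsS)$ staying exactly $r$ (built into $\calD_r$) together with the normalization $a_{\i0} \neq 0$ and the full-rank column block both persisting by continuity; the cleanest way to package this is the ambient smooth extension $\Phi$ above, which sidesteps having to differentiate along the a priori non-smooth set $\calD_r$.
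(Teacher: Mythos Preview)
Your argument is correct and takes a genuinely different route from the paper. The paper relies on the explicit formulas in Proposition~\ref{prop:parametrization}: the forward map is written as $S_\tau(\Si0,\Ai0)=\bfZ(\bfZ_{\row{\calI(\i0)}})^{-1}\Si0$ with $\bfZ=\Proj_{\calZ(\bfa)}\bfZ_0$, and the inverse involves $\Proj_{\calL(\tsS)}$; the proof then simply checks that $\Proj_{\calZ(\bfa)}=\bfI_N-\bfQ(\bfa)(\bfQ^\rmT(\bfa)\bfQ(\bfa))^{-1}\bfQ^\rmT(\bfa)$ and $\Proj_{\calL(\tsS)}=\bfS_{\col{\calJ}}((\bfS_{\col{\calJ}})^\rmT\bfS_{\col{\calJ}})^{-1}(\bfS_{\col{\calJ}})^\rmT$ are smooth wherever the relevant full-rank conditions hold. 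You instead bypass projections entirely: in each direction you isolate a square linear system (the interior block of $\bfQ^\rmT(\fullop(\Ai0))$ for the forward map, an $r\times r$ row subblock of $(\calT_{r+1}(\tsS)^\rmT)_{\col{\calK(\i0)}}$ for the inverse) whose coefficient matrix is polynomial in the data and nonsingular at the base point, then invoke Cramer's rule. This is more elementary and portable, and your ambient smooth extension $\Phi$ is a clean device for handling the inverse before $\calD_r$ is known to be a manifold; the paper's approach, on the other hand, gets smoothness almost for free once the projection formulas of Proposition~\ref{prop:parametrization} are in place. One small point: your appeal to ``the well-posedness of the reconstruction underlying Theorem~\ref{th:parametrization}'' for invertibility of the interior block is really an appeal to the proof of Proposition~\ref{prop:parametrization} (invertibility of $\bfZ_{\row{\calI(\i0)}}$, which is equivalent), not to the statement of Theorem~\ref{th:parametrization} itself.
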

\begin{proof}
See the proof in Section~\ref{sec:th:param_smooth}.
\end{proof}

Let us consider the derivatives of the parameterizing mapping.
Let the series $\tsS$ belong to a sufficient small neighborhood of $\tsS_0$ and be parameterized as $\tsS = S_{\tau}(\Si0, \Ai0)$.
Denote $\bfJ_{S_{\tau}} = \bfJ_{S_{\tau}}(\Si0, \Ai0) \in \spR^{N\times 2r}$ the Jacobian matrix of  $S_{\tau}(\Si0, \Ai0)$.

By definition, the tangent subspace at the point $\tsS$ coincides with $\colspace\left(\bfJ_{S_{\tau}}(\Si0, \Ai0)\right)$.
Note that the tangent subspace is invariant with respect to the choice of a certain parameterization of $\calD_r$ in the vicinity of $\tsS$.

Define by $\bfa^2$ the acyclic convolution of $\bfa$ with itself:
\begin{equation*}
	\bfa^2 = (a^{(2)}_i) \in \spR^{2r+1}, \quad a^{(2)}_i = \sum_{j=\max(1, i - r)}^{\min(i, r+1)} a_j a_{i - j + 1}.
\end{equation*}
		
\begin{theorem}
\label{th:tangent}
	The tangent subspace to $\calD_r$ at the point $\tsS$ has dimension $2r$ and is equal to $\calZ(\bfa^2)$.
\end{theorem}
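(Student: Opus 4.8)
The plan is to prove the two assertions separately: first the dimension, then the identification with $\calZ(\bfa^2)$ by establishing an inclusion and matching dimensions. Throughout, let $\bfa = \fullop(\Ai0)\in\spR^{r+1}$ denote the minimal GLRR vector attached to $\tsS = S_{\tau}(\Si0,\Ai0)$ by Theorem~\ref{th:parametrization}, so that $\tsS\in\calZ(\bfa)$. For the dimension I would invoke Theorem~\ref{th:param_smooth}: since $S_{\tau}$ is a smooth diffeomorphism from an open subset of $\spR^{2r}$ onto its image, its differential (the Jacobian $\bfJ_{S_{\tau}}\in\spR^{N\times 2r}$) is injective and hence of rank $2r$, so the tangent subspace $\colspace(\bfJ_{S_{\tau}})$ has dimension exactly $2r$.

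For the core identification, introduce for a coefficient vector $\bfc\in\spR^{p+1}$ the linear ``annihilation'' map $\Phi_{\bfc}\colon \tsU \mapsto \big(\bfc^\rmT\calT_{p+1}(\tsU)\big)^\rmT$, which returns the column of left-hand sides of the relation GLRR($\bfc$); thus $\calZ(\bfc)=\ker\Phi_{\bfc}$, and $\Phi_{\bfc}$ is linear both in $\tsU$ and in $\bfc$. Two elementary identities, verified by a direct index computation against the definition of $\bfa^2$, drive the argument: applying a GLRR twice composes as $\Phi_{\bfa}\Phi_{\bfa}=\Phi_{\bfa^2}$, and more generally $\Phi_{\bfa}\Phi_{\bfb}=\Phi_{\bfb}\Phi_{\bfa}$ for any $\bfa,\bfb\in\spR^{r+1}$ (both equal $\Phi_{\bfc}$ with $\bfc\in\spR^{2r+1}$ the convolution of $\bfa$ and $\bfb$).

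Next, take an arbitrary smooth curve $t\mapsto\tsS(t)=S_{\tau}(\Si0(t),\Ai0(t))$ in $\calD_r$ with $\tsS(0)=\tsS$, set $\bfv=\tsS'(0)$ (an arbitrary tangent vector), and put $\bfb=\tfrac{d}{dt}\fullop(\Ai0(t))\big|_{0}\in\spR^{r+1}$. By Theorem~\ref{th:parametrization} the identity $\Phi_{\fullop(\Ai0(t))}\,\tsS(t)=\bfzero$ holds for all $t$; differentiating at $t=0$ and using linearity of $\Phi$ in its coefficient gives $\Phi_{\bfb}\tsS+\Phi_{\bfa}\bfv=\bfzero$. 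Applying $\Phi_{\bfa}$ and invoking the two identities,
\begin{equation*}
\Phi_{\bfa^2}\bfv = \Phi_{\bfa}\Phi_{\bfa}\bfv = -\Phi_{\bfa}\Phi_{\bfb}\tsS = -\Phi_{\bfb}\Phi_{\bfa}\tsS = -\Phi_{\bfb}\big(\Phi_{\bfa}\tsS\big) = \bfzero ,
\end{equation*}
since $\Phi_{\bfa}\tsS=\bfzero$. Hence every tangent vector lies in $\calZ(\bfa^2)=\ker\Phi_{\bfa^2}$, so $\colspace(\bfJ_{S_{\tau}})\subseteq\calZ(\bfa^2)$.

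It remains to check $\dim\calZ(\bfa^2)=2r$ and conclude. Here $\bfa^2\in\spR^{2r+1}$ is nonzero (as $\bfa\neq\bfzero$), and $\calZ(\bfa^2)$ is the left nullspace of the band matrix $\bfQ^{N,2r}(\bfa^2)$. I would show this matrix has full column rank $N-2r$: a vanishing combination of its $N-2r$ columns amounts to a polynomial identity $C(x)\,A(x)^2=0$, where $A(x)=\sum_{k}a_k x^{k-1}\neq0$ is the symbol of $\bfa$ and $\bfa^2$ is the coefficient vector of $A(x)^2$; since $\spR[x]$ is an integral domain this forces $C\equiv0$. Consequently $\dim\calZ(\bfa^2)=N-(N-2r)=2r$. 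Combining the inclusion with the equal dimensions $2r=2r$ yields $\colspace(\bfJ_{S_{\tau}})=\calZ(\bfa^2)$, the claim. The main obstacle is bookkeeping rather than conceptual: pinning down the composition identity $\Phi_{\bfa}\Phi_{\bfa}=\Phi_{\bfa^2}$ and the commutation with the correct finite (acyclic) index ranges, and handling the dimension count carefully in the degenerate case where the extreme coefficients of $\bfa^2$ vanish — which is precisely where the integral-domain argument, rather than a naive ``two boundary conditions per order'' count, is needed. After that, the differentiation step and the dimension comparison are routine.
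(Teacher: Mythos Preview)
Your proof is correct and follows essentially the same approach as the paper: differentiate the GLRR constraint $\Phi_{\bfa}\tsS=\bfzero$, invoke the convolution identity $\Phi_{\bfa}\circ\Phi_{\bfa}=\Phi_{\bfa^2}$ (which the paper states in matrix form as $(\bfQ^{N-r,r}(\bfa))^\rmT(\bfQ^{N,r}(\bfa))^\rmT=\bfQ^\rmT(\bfa^2)$, citing \cite{Usevich2017}), and use the diffeomorphism of Theorem~\ref{th:param_smooth} to get $\rank\bfJ_{S_\tau}=2r$. Your presentation via curves and the bilinear map $\Phi$ is a bit more coordinate-free than the paper's column-by-column differentiation in Lemmas~\ref{eqa:derivS}--\ref{eqa:derivA}, and you make the dimension count $\dim\calZ(\bfa^2)=2r$ explicit via the integral-domain argument, which the paper leaves implicit.
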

\begin{proof}
See the proof in Section~\ref{sec:th:tangent}.
\end{proof}

\section{Optimization}
\label{sec:optim}
Let us consider different numerical methods for solving the problem \eqref{eq:wls}.
First, note that we  search for a local minimum. Then, since the objective function is smooth in the considered
parameterization, one can apply the conventional weighted version of the Gauss-Newton method (GN), see \cite{nocedal2006numerical} for details.
However, this approach appears to be numerically unstable and has a high computational cost.

In \cite{Usevich2014}, the variable-projection method (VP) is used
for solving the minimization problem. When the reduced minimization problem
is solved again by the Gauss-Newton method, we will refer to it as VPGN.

We propose a similar (but different) approach called Modified Gauss-Newton method (MGN),
which appears to have some  advantages in comparison with VPGN that is one of the best methods
for solving the problem \eqref{eq:wls}. Below we will show that the MGN algorithm consists of different numerical sub-problems to be solved, which are more well-conditioned  than in the VPGN case (see \cite{Deuflhard.Hohmann2003}, where different properties of problems such as stability and well-conditioning are discussed); thereby, MGN allows a better numerically stable implementation.

The structure of this section is as follows.
After a brief discussion of the problem \eqref{eq:wls} we start with the description of the methods GN and VP for a general optimization problem; then we apply these methods to \eqref{eq:wls} and finally present the new method MGN.

Note that the considered methods are used for solving a weighted least-squares problem and therefore
we consider their weighted versions, omitting `weighted' in the names of the methods.

Let us introduce notation, which is used in this section.
For some matrix $\bfF = \spR^{N\times p}$, define its weighted pseudoinverse \cite{Stewart1989}
$\winverse{\bfF}{\bfW} = (\bfF^\rmT \bfW \bfF)^{-1}\bfF^\rmT \bfW$;
this pseudoinverse arises in the solution of the linear weighted least-squares problem $\min_\bfp \|\bfy - \bfF \bfp\|_{\bfW}^2$
with $\bfy \in \spR^{N}$, since its solution is equal to $\bfp_{\mathrm{min}} = \winverse{\bfF}{\bfW} \bfy$. In the particular case $\bfW = \bfI_N$, $\winverse{\bfF}{\bfW}$ is the ordinary pseudoinverse; we will denote it $\inverse{\bfF}$.
Denote the projection (it is oblique if $\bfW$ is not the identity matrix) onto the column space $\calF$ of a matrix $\bfF$ as $\Proj_{\bfF, \bfW} = \bfF \winverse{\bfF}{\bfW}$.
If it is not important which particular basis of $\calF$ is considered, we use the notation $\Proj_{\calF, \bfW}$.

\begin{remark}
\label{rem:degenerate}
Let us consider a degenerate case when $\bfF^\rmT \bfW \bfF$ is not positive definite or, the same,
$\bfW ^{1/2}\bfF$ is rank-deficient ($\bfW ^{1/2}$ is the principal square root of $\bfW$). Then we can use a different representation for the weighted pseudoinverse:
$\winverse{\bfF}{\bfW} = \inverse{(\bfW^{1/2} \bfF)} \bfW^{1/2}$.
This corresponds to the minimum-(semi)norm solution of the corresponding WLS problem $\min_\bfp \|\bfy - \bfF \bfp\|_{\bfW}^2$.
Although,
the projection $\Proj_{\bfF, \bfW}$ is generally not uniquely defined in the degenerate case, we will consider its uniquely defined version given by the formula  $\Proj_{\bfF, \bfW} = \bfF \winverse{\bfF}{\bfW}$.

The matrix $\bfW ^{1/2}\bfF$ is rank-deficient if $\bfF$ is rank-deficient.
However, for full-rank $\bfF$ and degenerate $\bfW$, $\bfW ^{1/2}\bfF$ is not necessarily rank-deficient.
For example, if the orthogonal projections of the columns of $\bfF$ on $\colspace(\bfW)$ are linearly independent,
then  $\bfW ^{1/2}\bfF$ is full-rank.
\end{remark}

\begin{remark}
\label{rem:complexF}
If the matrix $\bfF$ is complex, the above formulas and considerations are still valid with the change of the transpose $\bfF^\rmT$ to the complex conjugate $\bfF^*$.
\end{remark}

\subsection{Properties of the optimization problem \eqref{eq:wls}}
 The following lemma shows that the global minimum of \eqref{eq:wls} belongs to $\calD_r$ for the majority of $\tsX$. Therefore, it is sufficient
 to find the minimum in the set of series of exact rank $r$.

 \begin{lemma}\label{lemma:minindr}
	Let $\tsX \notin \overline{\calD_r} \setminus \calD_r$ and $\bfW$ be positive definite. Then any point of the global minima in the problem \eqref{eq:wls} belongs to $\calD_r$.
\end{lemma}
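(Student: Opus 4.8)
The plan is to argue by contradiction, exploiting the fact that $\overline{\calD_r}$ contains many linear subspaces $\calZ(\bfb)$ through any one of its points, and that positive definiteness of $\bfW$ turns a global minimizer into a $\bfW$-orthogonal projection onto each such subspace.

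First I would let $\tsY^\star$ be any point of the global minimum set and suppose, for contradiction, that $\tsY^\star \in \overline{\calD_r}\setminus\calD_r$, i.e. $\rank\tsY^\star = r' < r$. For every $\bfb\in\spR^{r+1}$ such that GLRR($\bfb$) governs $\tsY^\star$, the set $\calZ(\bfb)$ is a linear subspace of $\spR^N$, it is contained in $\overline{\calD_r}$ by property (a) of Section~\ref{sec:lrr}, and it contains $\tsY^\star$. Since $\tsY^\star$ minimizes $\|\tsX-\cdot\|_{\bfW}$ over the larger set $\overline{\calD_r}$, it also minimizes this $\bfW$-distance over $\calZ(\bfb)$; as $\bfW$ is positive definite, the minimizer over a linear subspace is unique and equals the $\bfW$-orthogonal projection of $\tsX$, so the residual $\bfR := \tsX-\tsY^\star$ is $\bfW$-orthogonal to $\calZ(\bfb)$.

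The heart of the argument is to show that, as $\bfb$ ranges over all order-$r$ GLRRs governing $\tsY^\star$, the subspaces $\calZ(\bfb)$ jointly span all of $\spR^N$. Because $\tsY^\star$ has rank $r'\le r-1$, it satisfies a minimal GLRR($\bfa'$) with $\bfa'\in\spR^{r'+1}$. For a scalar $\rho$, the geometric series $\bfg_\rho=(\rho,\rho^2,\dots,\rho^N)^\rmT$ satisfies the order-one GLRR with coefficient vector $(\rho,-1)^\rmT$. The acyclic convolution of $\bfa'$ with $(\rho,-1)^\rmT$, padded by shifts up to length $r+1$, produces a vector $\bfb\in\spR^{r+1}$ whose GLRR governs both $\tsY^\star$ and $\bfg_\rho$ (the associated polynomial shift-operators commute and annihilate the respective series), so that $\bfg_\rho\in\calZ(\bfb)$. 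Choosing $N$ distinct nonzero values $\rho_1,\dots,\rho_N$ (avoiding the finitely many modes of $\tsY^\star$), the vectors $\bfg_{\rho_1},\dots,\bfg_{\rho_N}$ form a Vandermonde system and are linearly independent, hence span $\spR^N$. Consequently $\bfR$ is $\bfW$-orthogonal to a spanning set of $\spR^N$, and positive definiteness of $\bfW$ forces $\bfR=\bfzero_N$. Then $\tsX=\tsY^\star$ has rank $r'<r$, i.e. $\tsX\in\overline{\calD_r}\setminus\calD_r$, contradicting the hypothesis; therefore every global minimizer lies in $\calD_r$.

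I expect the main obstacle to be the spanning claim of the third paragraph: one must verify cleanly that an order-$(r'+1)\le r$ GLRR annihilating both $\tsY^\star$ and an arbitrary geometric mode genuinely exists and embeds into $\spR^{r+1}$ (the commuting-operator/convolution bookkeeping, together with the harmless exclusion of the finitely many $\rho$ coinciding with modes of $\tsY^\star$). The remaining ingredients — the projection characterization of the minimizer over each $\calZ(\bfb)$, and the Vandermonde independence — are routine once positive definiteness of $\bfW$ is in hand.
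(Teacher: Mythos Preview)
Your proposal is correct and follows essentially the same route as the paper's proof: both argue by contradiction, take a rank-deficient minimizer, use the convolution of its minimal GLRR with $(\rho,-1)^\rmT$ to produce order-$\le r$ GLRRs whose associated subspaces contain the geometric series $\bfg_\rho$, and then invoke the Vandermonde independence of these exponentials together with positive definiteness of $\bfW$. The only cosmetic difference is that the paper picks a single $\rho$ with $\langle \tsX-\tsS_0,\bfg_\rho\rangle_\bfW\neq 0$ and explicitly exhibits a strictly better point on the line $\tsS_0+\alpha\bfg_\rho$, whereas you phrase the same fact as ``the residual is $\bfW$-orthogonal to every $\calZ(\bfb)$, hence to all of $\spR^N$''; these are equivalent. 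Your aside about avoiding modes of $\tsY^\star$ is harmless but unnecessary, since the convolution argument goes through regardless.
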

\begin{proof}
See the proof in Section~\ref{sec:lemma:minindr}.
\end{proof}

Thus, the problem \eqref{eq:wls} can be considered as a minimization problem in $\calD_r$; therefore, in the chosen parameterization of $\calD_r$ (see Section~\ref{sec:param}), the problem \eqref{eq:wls} in the vicinity of $\tsS_0$ has the form
	\begin{equation}
    \label{eq:wlsP}
	\bfp^\star = \argmin_{\bfp} \| \tsX - S(\bfp) \|_{\bfW},
	\end{equation}
where $\bfp=(\Si0, \Ai0)$, $S = S_{\tau}$.
Since $S(\bfp)$ is a differentiable function of $\bfp$ due to Theorem~\ref{th:param_smooth}
for an appropriate choice of $\i0$, numerical methods like the Gauss-Newton method can be applied to the solution of \eqref{eq:wlsP}.

The following theorem helps to detect if the found solution is a local minimum. Recall that $\calZ(\bfa^2)$ determines the
tangent subspace (Theorem~\ref{th:tangent}).

\begin{lemma}[Necessary conditions for local minima]\label{lemma:locminnec}
	Let $\bfW$ be positive definite. If the series $\tsX_0 \in \calD_r$ which is governed by a GLRR($\bfa_0$) provides a local minimum in the problem \eqref{eq:wls},
then $\Proj_{\calZ(\bfa_0^2), \Sigminus}(\tsX - \tsX_0) = \bfzero_N$.
\end{lemma}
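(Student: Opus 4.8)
The plan is to reduce the statement to the first-order optimality condition for a smooth objective restricted to the smooth manifold $\calD_r$, and then to identify the resulting $\bfW$-orthogonality relation with the vanishing of the claimed projection by invoking Theorem~\ref{th:tangent}. As a preliminary normalization I would replace the objective $\|\tsX - \tsY\|_{\bfW}$ by its square $f(\tsY) = (\tsX - \tsY)^\rmT \bfW (\tsX - \tsY)$, which has the same minimizers but, unlike the seminorm, is smooth everywhere (the seminorm fails to be differentiable only at a zero residual). If $\tsX = \tsX_0$ the conclusion is trivial, so I may assume the residual $\tsX - \tsX_0$ is nonzero.

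The crucial preliminary observation is that, although the infimum in \eqref{eq:wls} is taken over the larger set $\overline{\calD_r}$, a local minimum attained at a point $\tsX_0 \in \calD_r$ is automatically a local minimum over the manifold $\calD_r$ alone. This follows from lower semicontinuity of the rank of $\calT_{r+1}(\cdot)$: the set $\{\tsY : \rank \calT_{r+1}(\tsY) \geq r\}$ is open in $\spR^N$, whereas every $\tsY \in \overline{\calD_r}$ satisfies $\rank \calT_{r+1}(\tsY) \leq r$; hence $\calD_r$ is relatively open in $\overline{\calD_r}$, and a sufficiently small neighborhood of $\tsX_0$ in $\overline{\calD_r}$ lies entirely inside $\calD_r$. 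I expect this to be the main point requiring care, since it is precisely what licenses treating the constrained problem as an unconstrained smooth minimization in the local parameterization of Section~\ref{sec:param}.

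With this reduction secured, I would pick an index $\i0$ with $a^{(0)}_{\i0} = -1$ and use the smooth diffeomorphism $S_{\tau}$ from Theorem~\ref{th:param_smooth}, writing $\tsX_0 = S_{\tau}(\bfp_0)$ for the corresponding interior parameter $\bfp_0 \in \spR^{2r}$. Since $S_{\tau}$ maps a neighborhood of $\bfp_0$ diffeomorphically onto a relative neighborhood of $\tsX_0$ in $\calD_r$, the composed map $g(\bfp) = f(S_{\tau}(\bfp))$ attains an interior local minimum at $\bfp_0$, so $\nabla g(\bfp_0) = \bfzero_{2r}$. A routine differentiation gives $\nabla g(\bfp_0) = -2\,\bfJ_{S_\tau}^\rmT \bfW (\tsX - \tsX_0)$, whence $\bfJ_{S_\tau}^\rmT \bfW (\tsX - \tsX_0) = \bfzero_{2r}$. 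Geometrically, this asserts that the residual $\tsX - \tsX_0$ is $\bfW$-orthogonal to the tangent subspace $\colspace(\bfJ_{S_\tau})$, which equals $\calZ(\bfa_0^2)$ by Theorem~\ref{th:tangent}.

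Finally I would convert this orthogonality into the vanishing of the projection. Let $\bfZ = \bfZ(\bfa_0^2)$ be a basis matrix of $\calZ(\bfa_0^2)$. Since $\colspace(\bfZ) = \colspace(\bfJ_{S_\tau})$, we may write $\bfZ = \bfJ_{S_\tau}\bfM$ for some matrix $\bfM$, so that $\bfZ^\rmT \bfW (\tsX - \tsX_0) = \bfM^\rmT \bfJ_{S_\tau}^\rmT \bfW (\tsX - \tsX_0) = \bfzero$. Because $\bfW$ is positive definite and $\bfZ$ has full column rank, the matrix $\bfZ^\rmT \bfW \bfZ$ is invertible, and therefore
$$\Proj_{\calZ(\bfa_0^2), \bfW}(\tsX - \tsX_0) = \bfZ (\bfZ^\rmT \bfW \bfZ)^{-1} \bfZ^\rmT \bfW (\tsX - \tsX_0) = \bfzero_N,$$
which is exactly the asserted necessary condition.
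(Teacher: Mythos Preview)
Your proposal is correct and follows essentially the same route as the paper: pass to the squared objective, use the smooth local parameterization $S_\tau$ (Theorems~\ref{th:parametrization} and~\ref{th:param_smooth}), apply the first-order necessary condition for an interior minimum, and identify the tangent space with $\calZ(\bfa_0^2)$ via Theorem~\ref{th:tangent}. You spell out two points the paper leaves implicit --- the relative openness of $\calD_r$ in $\overline{\calD_r}$ and the conversion from $\bfW$-orthogonality to vanishing of $\Proj_{\calZ(\bfa_0^2),\bfW}$ --- but the underlying argument is the same.
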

\begin{proof}

	Let us take an appropriate index $\i0$ together with the parameterization $S_{\tau}(\Si0, \Ai0)$ introduced in Theorem \ref{th:parametrization}.
Due to Theorem \ref{th:param_smooth}, the objective function $\|\tsX -  S_{\tau}(\Si0, \Ai0)\|^2_\bfW$ is smooth in the vicinity of $\left((\bfs_0)_{\calI(\i0)}, (\bfa_0)_{\calK(\i0)}\right)^\rmT \in \spR^{2r}$. Theorem~\ref{th:tangent} together with \cite[Theorem 2.2]{nocedal2006numerical}, which formulates the necessary conditions for a minimum in a general case, applied to the considered
objective function finish the proof.
\end{proof}

Note that Lemma~\ref{lemma:locminnec} provides the necessary condition only. According to \cite[Theorem 2.3]{nocedal2006numerical},
 sufficient conditions for a minimum include positive definiteness of the Hessian of the objective function. For the present, we can check this positive definiteness only numerically.

\subsubsection{The case of ill-posed problem}
\label{rem:seminorm}
	Up to this point, we assumed that the weight matrix $\bfW$ is full-rank.
 As we have mentioned, the problem \eqref{eq:wls} for a degenerate weight matrix $\bfW$ can be ill-posed, since the set $\overline \calD_r$ becomes not closed. This means that there are time series $\tsX$ such that the infimum of the objective function is not achieved at $\overline \calD_r$ and therefore the problem \eqref{eq:wls} cannot be solved.

 Let us demonstrate this by an example.
 Let $N \ge 3$ and take the series $\tsX = \tsX_{N} = (0, \ldots, 0, 1, 0)^\rmT \in \spR^N$. Consider
 the simple case $r=1$ and the weight matrix $\bfW = \diag((1, \ldots, 1, 0)^\rmT)$, which is evidently degenerate.
  Then for exponential time series $\tsY(\mu) = (1, \mu, \mu^2, \ldots, \mu^{N-1})^\rmT / \mu^{N-2}$ we have  $\|\tsY(\mu) - \tsX\|_{\bfW} \to 0$ as $\mu \to \infty$. However, there does not exist a series $\widetilde \tsY \in \overline \calD_1$ such that $\| \tsX - \widetilde \tsY \|_\bfW = 0$. Indeed, if we suppose that $\| \tsX - \widetilde \tsY \|_\bfW = 0$, then $\widetilde \tsY$ has the form $\widetilde \tsY = (0, \ldots, 0, 1, \tilde y)^\rmT$, where $\tilde y \in \spR$. This is a contradiction, since $\widetilde \tsY$ is not governed by a GLRR($\bfa$) of order $1$ for any $\tilde y$ (a non-zero $\bfa$ should be orthogonal to both $(0, 1)^\rmT$ and $(1, \tilde y)^\rmT$).

\subsection{Methods for solving a general nonlinear least-squares problem}\label{sec:opt_gen}
	Let $\bfx \in \spR^N$ be a given vector and consider a general WLS minimization problem
	\begin{equation}
    \label{eq:gen_optim}
	\bfp^\star = \argmin_\bfp \|\bfx - S(\bfp)\|_{\bfW}^2,
	\end{equation}
	where $\bfp \in \spR^p$ is the vector of parameters, $S: \spR^p \to \spR^N$ is some parameterization of a subset of $\spR^N$ such that $S(\bfp)$ is a differentiable vector-function of $\bfp$,
$\bfW \in \spR^{N\times N}$ is
   a positive (semi-)definite symmetric matrix.

   If the problem \eqref{eq:gen_optim} is non-linear, iterative methods with linearization
at each iteration are commonly used, such as the Gauss-Newton method or its variations \cite{nocedal2006numerical}.
One of the commonly used variations is the Levenberg-Marquardt method,
which is a regularized version of the Gauss-Newton method. This regularization improves the method
far from the minimum and does not affect near the minimum. Therefore, in the paper, we consider the Gauss-Newton method without regularization.
We use a weighted Gauss-Newton method, which is a straightforward extension of the unweighted version.

\subsubsection{Gauss-Newton method} \label{sec:optim_wgn}

One iteration of the Gauss-Newton algorithm  with step $\gamma$ is
\begin{equation}
\label{eq:GN_P}
\bfp_{k+1} = \bfp_{k} + \gamma \winverse{\bfJ_{S}(\bfp_k)}{\bfW} (\bfx - S(\bfp_k)),
\end{equation}
where $\bfJ_{S}(\bfp_k)$ is the Jacobian matrix of $S(\bfp)$ at $\bfp_k$. Note that the iteration step \eqref{eq:GN_P} is uniquely defined for any positive semi-definite matrix, see Remark~\ref{rem:degenerate}.
The choice of step $\gamma$ is a separate problem. For example, one can apply the backtracking line search starting at $\gamma = 1$
and then decreasing the step if the next value is worse (that is, if the value of the objective functional increases).


An additional aim of the WLS problem is to find the approximation $S(\bfp^\star)$ of $\bfx$, where $\bfp^\star$ is the solution of \eqref{eq:gen_optim}.
 Then we can write \eqref{eq:GN_P} in the form of iterations of approximations:
\begin{equation} \label{eq:gnwls}
S(\bfp_{k+1}) = S \left(\bfp_{k} + \gamma \winverse{\bfJ_{S}(\bfp_k)}{\bfW} \left(\bfx - S(\bfp_k)\right)\right).
\end{equation}

The following remark explains the approach, which underlies the Modified Gauss-Newton method proposed in this paper.
\begin{remark} \label{rem:replacement}
	The iteration step \eqref{eq:gnwls} can be changed by means of the change of $S(\bfp_{k+1})$ to $\widetilde S(\bfp_{k+1})$, where $\widetilde S(\bfp_{k+1})$ is such that  $\|\bfx - \widetilde S(\bfp_{k+1})\|_\bfW \le \|\bfx - S(\bfp_{k+1})\|_\bfW$. This trick is reasonable if $\widetilde S(\bfp_{k+1})$ can be calculated
faster and/or in a more stable way than $S(\bfp_{k+1})$.
\end{remark}

\subsubsection{Variable projection}
	Let $\bfp = \left(\begin{matrix}\bfb\\\bfc\end{matrix}\right)\in \spR^p$, $\bfb\in \spR^{p_1}$, $\bfc\in \spR^{p_2}$.
Consider the (weighted) least-squares problem \eqref{eq:gen_optim}, where $S(\bfp)$ is linear in $\bfc$ and the nonlinear part is defined by
$\bfG(\bfb)\in \spR^{N\times p_2}$:
\bea
S(\bfp) = \bfG(\bfb) \bfc.
\eea
This problem can be considered as a problem of projecting the data vector $\bfx$ onto a given set:
	\begin{equation}
\label{eq:full_optim}
	\min_{\bfy \in \calD} \|\bfx - \bfy \|_{\bfW}, \quad \text{where} \quad \calD =
    \Big\{\bfG(\bfb) \bfc \mid \left(\begin{matrix}\bfb\\\bfc\end{matrix}\right)\in \spR^p\Big\}.
	\end{equation}
	 Here $\{\varphi(z)\mid z\in \calC\}$ means the set of values of $\varphi(z)$ for $z\in \calC$.
    The variable projection method takes advantage of the known explicit solution of the subproblem:
	\begin{equation*}
	C^\star(\bfb) = \argmin_{\bfc} \|\bfx - \bfG(\bfb) \bfc \|_{\bfW} = \winverse{\bfG(\bfb)}{\bfW} \bfx.
	\end{equation*}

\smallskip	
	Denote $S^\star(\bfb) = \bfG(\bfb) C^\star(\bfb)$, $\calG(\bfb) = \{\bfG(\bfb)\bfc \mid \bfc\in \spR^{p_2} \}$. Then
	\begin{equation} \label{eq:minZ}
	S^\star(\bfb) = \argmin_{\bfs \in \calG(\bfb)} \|\bfx - \bfs \|_{\bfW}.
	\end{equation}
	Thus, we can reduce the problem \eqref{eq:full_optim} to the projection onto a subset $\calD^\star \subset \calD$ and
thereby to the optimization in the nonlinear part of parameters only:
	\begin{equation}\label{eq:vpprinciple}
	\min_{\bfy \in \calD^\star} \|\bfx - \bfy \|_{\bfW} \quad \text{with} \quad \calD^\star = \{S^\star(\bfb) \mid \bfb\in\spR^{p_1}\}.
	\end{equation}
This is called ``variable projection'' principle (see \cite{Golub.Pereyr2003} for the case of the Euclidean norm).

\subsection{Known iterative methods for solving the problem \eqref{eq:wls}}
Let us turn from a general nonlinear least-squares problem \eqref{eq:gen_optim} to the specific problem \eqref{eq:wls} in the form \eqref{eq:wlsP}.

A variation from the standard way of the use of iterative methods is that the parameterization $S_{\tau}(\bfp)$, $\bfp=(\Si0, \Ai0)$ (which is based on $\i0$)
is changed at each iteration in a particular way.
At $(k+1)$-th iteration, the parameterization is constructed in the vicinity of  $\bfa_0 = \bfa^{(k)}$. The index $\i0$, which determines the parameterization, is chosen in such a way to satisfy $a^{(0)}_\i0 \neq 0$.
We propose the following approach to the choice of $\i0$.
Let $\i0$ be the index of the maximum absolute entry of $\bfa_0$. Since the parameterization is invariant to the multiplication of $\bfa_0$ by a constant, it can be assumed that $a^{(0)}_\i0 = -1$ and $|a^{(0)}_i| \le 1$ for any $i$, $1 \le i \le r+1$.

\subsubsection{Weighted Gauss-Newton method for \eqref{eq:wls}}

The Gauss-Newton algorithm can be applied to the problem \eqref{eq:wlsP} in a straightforward manner, taking into consideration that the parameterization $S_{\tau}$ may be changed at each iteration.
The Gauss-Newton iteration has the form $\bfp_{k+1} = \bfp_k + \gamma \winverse{\bfJ_{S_{\tau}}(\bfp_k)}{\bfW} (\tsX - S_{\tau}(\bfp_k))$.

To apply the method, $S_{\tau}(\bfp_k)$ and the Jacobian matrix $\bfJ_{S_{\tau}}(\bfp_k)$
should be calculated. Formally, their computing can be implemented; however, the direct calculation is not
numerically stable and very time-consuming.

\subsubsection{Variable projection for \eqref{eq:wls} (VPGN)}
\label{sec:VPGN}
The explicit form of the parameterization $S_{\tau}(\bfp) = S_{\tau}(\Si0, \Ai0)$ given in \eqref{eq:param}, where $\Si0$ is presented in $S_{\tau}(\Si0, \Ai0)$ in a linear manner,
allows one to
apply the variable projection principle.

Assume that $\tsS_0$ is governed by a GLRR($\bfa_0$) with $a^{(0)}_{\i0} = -1$ and consider the problem \eqref{eq:wls} in the vicinity of the series $\tsS_0 \in \calD_r$.

Substitute in \eqref{eq:vpprinciple} $\calD = \overline \calD_r$, $\calD^\star = \calD_r^\star \subset \overline \calD_r$, where $\calD_r^\star = \{\Proj_{\calZ(\fullop(\Ai0)), \bfW}(\tsX) \mid \Ai0 \in \spR^{r}\}$, $\bfb = \Ai0$, $\bfG(\bfb) = \bfG$, where $\bfG = \bfZ \left(\bfZ_{\row{\calI({\i0})}}\right)^{-1}$ (see \eqref{eq:param}), $C^\star(\bfb) = \winverse{\bfG}{\bfW} \tsX$, $\bfG(\bfb)C^\star(\bfb) = \Proj_{\calZ(\fullop(\Ai0)), \bfW}(\tsX)\stackrel{\mathrm{def}}{=}S_{\tau}^\star(\Ai0)$.
Then we obtain the equivalent problem for projecting the elements from the set $\overline \calD_r$ to the subset $\calD_r^\star$,
where the parameter $\Si0$ is eliminated:
\begin{equation}\label{eq:wlsvp_set}
\tsY^\star = \argmin_{\tsY \in \calD_r^\star} \| \tsX - \tsY \|_{\bfW}.
\end{equation}
Therefore, we can present the problem \eqref{eq:wlsvp_set} in terms of the parameter $\Ai0$ only:
\begin{equation}\label{eq:wlsvp1}
\Ai0^\star = \argmin_{\Ai0 \in \spR^r} \| \tsX - S_{\tau}^\star(\Ai0) \|_{\bfW},
\end{equation}
Thus, for the numerical solution of the equation \eqref{eq:wls}, it is sufficient to consider iterations for the nonlinear part of the parameters.
This is the VP approach used in \cite{Usevich2012, Usevich2014}.

Let us denote $\bfJ_{S_{\tau}^\star}(\Ai0)$ the Jacobian matrix of $S_{\tau}^\star(\Ai0)$.
Then the iterations of the Gauss-Newton method for solving the problem \eqref{eq:wlsvp1} have the form
\begin{equation}
\label{eq:gauss_simple}
\Ai0^{(k+1)} = \Ai0^{(k)} + \gamma \winverse{\bfJ_{S_{\tau}^\star}(\Ai0^{(k)})}{\bfW} (\tsX - S_{\tau}^\star(\Ai0^{(k)})).
\end{equation}
The VPGN algorithm together with an explicit form of $\bfJ_{S_{\tau}^\star}(\Ai0^{(k)})$ is presented in Section~\ref{sec:VPGN_alg} (Algorithm~\ref{alg:gauss_newton_vp}).

\subsection{Modified Gauss-Newton method for \eqref{eq:wls} (MGN)}

In this section, we propose a new iterative method for the problem \eqref{eq:wls}, which is a modified Gauss-Newton method.

Let us return to the problem with the full set of parameters $(\Si0,\Ai0)$ and apply the approach that is described in Remark~\ref{rem:replacement},
 with $\widetilde S(\bfp) = S_{\tau}^\star(\Ai0)$. We can do it, since $S_{\tau}^\star(\Ai0) = \Proj_{\calZ(\fullop(\Ai0)), \bfW}(\tsX)$ and therefore \eqref{eq:minZ} is valid with $\calG(\Ai0) = \calZ(\fullop(\Ai0))$.
 Thus, we can consider $S_{\tau}^\star\big( \Ai0^{(k+1)}\big) \in \calD_r^\star$ as the result of the $(k+1)$-th iteration instead of
$S_{\tau}\big(\Si0^{(k+1)}, \Ai0^{(k+1)}\big) \in \overline \calD_r$.
It appears (see Section~\ref{sec:ZofA}) that then we can use more stable numerical calculations for the iteration implementation.
The proposed modification is similar to variable projections, since we can omit the part $\Si0$ of parameters.

Thus, we introduce the MGN iteration in the form
\begin{equation}
\label{eq:iterGNA}
\Ai0^{(k+1)}
= \Ai0^{(k)} + \gamma \left( \winverse{\bfJ_{S_{\tau}}(\Si0^{(k)}, \Ai0^{(k)})}{\bfW} (\tsX - S_{\tau}^\star(\Ai0^{(k)})\big) \right) _{\col{\{r+1, \ldots, 2r\}}},
\end{equation}
where $\Si0^{(k)}$ are taken as the corresponding boundary data from $S_\tau^\star(\Ai0^{(k)})$, i.e. $\Si0^{(k)} = \left( S_\tau^\star(\Ai0^{(k)}) \right)_{\calI(\tau)}$.
As well as in the variable projection method with the iteration step \eqref{eq:gauss_simple}, $S_\tau^\star(\Ai0^{(k+1)}) \in \calD_r^\star$ for each $k$.
\newcommand{\bfFs}{\bfF_{S, k}}
\newcommand{\bfFa}{\bfF_{\bfa, k}}

\begin{theorem}\label{th:equivalency}
Let $\bfW^{1/2} \bfJ_{S_{\tau}}(\Si0^{(k)}, \Ai0^{(k)})$ have full rank.
Denote $\bfS = \calT_{r+1} \left(\Proj_{\calZ(\fullop(\bfa^{(k)})), \bfW}(\tsX) \right)$,
$\bfM = - \left(\bfS_{\row{\calK({\i0})}}\right)^\rmT$.
Then the iteration step \eqref{eq:iterGNA} is equivalent to
\begin{equation}
\label{eq:iterGNfinal}
	\Ai0^{(k+1)}
	= \Ai0^{(k)} + \gamma \winverse{(\bfI_N - \Proj_{\calZ(\fullop(\Ai0^{(k)})), \bfW})\widehat \bfF_{\bfa}}{\bfW} (\tsX - \Proj_{\calZ(\fullop(\Ai0^{(k)})), \bfW}(\tsX)),
\end{equation}
where $\widehat \bfF_{\bfa} \in \spC^{N \times 2r}$ is an arbitrary matrix satisfying $\bfQ^\rmT(\fullop(\Ai0^{(k)})) \widehat \bfF_{\bfa} = \bfM$.
\end{theorem}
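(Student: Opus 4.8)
The plan is to collapse the full $2r$-dimensional weighted Gauss--Newton step \eqref{eq:iterGNA} to the $r$-dimensional form \eqref{eq:iterGNfinal} by a block (variable-projection) elimination of the boundary parameters $\Si0$, and then to identify the resulting matrices with $\bfM$ and with the oblique projector onto $\calZ(\fullop(\Ai0^{(k)}))$.

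First I would split the Jacobian as $\bfJ_{S_\tau}(\Si0^{(k)},\Ai0^{(k)}) = [\,\bfFs \mid \bfFa\,]$, where $\bfFs$ is the block of the first $r$ columns (derivatives in $\Si0$) and $\bfFa$ the block of the last $r$ columns (derivatives in $\Ai0$). Because $\Si0^{(k)} = (S_\tau^\star(\Ai0^{(k)}))_{\calI(\i0)}$ and $S_\tau^\star(\Ai0^{(k)})\in\calZ(\fullop(\Ai0^{(k)}))$, the uniqueness in Theorem~\ref{th:parametrization} gives $S_\tau(\Si0^{(k)},\Ai0^{(k)}) = S_\tau^\star(\Ai0^{(k)})$, so the residual in \eqref{eq:iterGNA} is $\bfr := \tsX - S_\tau(\Si0^{(k)},\Ai0^{(k)})$ and the extracted block $(\winverse{\bfJ_{S_\tau}}{\bfW}\bfr)_{\col{\{r+1,\ldots,2r\}}}$ is exactly the $\delta\Ai0$-part of the solution of $\min_{\delta\Si0,\delta\Ai0}\|\bfr - \bfFs\,\delta\Si0 - \bfFa\,\delta\Ai0\|_\bfW^2$. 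Eliminating $\delta\Si0$ from the normal equations (a Schur-complement step that is legitimate because the hypothesis that $\bfW^{1/2}\bfJ_{S_\tau}$ has full rank makes $\bfFs^\rmT\bfW\bfFs$ and the Schur complement invertible), and using that $\bfW(\bfI_N-\Proj_{\bfFs,\bfW})$ is symmetric and $\bfI_N-\Proj_{\bfFs,\bfW}$ idempotent, I obtain
\[
\delta\Ai0 = \winverse{(\bfI_N-\Proj_{\bfFs,\bfW})\bfFa}{\bfW}\,(\bfI_N-\Proj_{\bfFs,\bfW})\,\bfr .
\]

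Next I would identify the two projection factors. Since $S_\tau(\,\cdot\,,\Ai0)$ is governed by $\mathrm{GLRR}(\fullop(\Ai0))$ for every admissible first argument, differentiation in $\Si0$ shows $\colspace(\bfFs)\subseteq\calZ(\fullop(\Ai0^{(k)}))$; as $\bfFs$ has full column rank $r$ (its rows indexed by $\calI(\i0)$ form $\bfI_r$ because $(S_\tau)_{\calI(\i0)}=\Si0$) and $\dim\calZ(\fullop(\Ai0^{(k)}))=r$ (the banded matrix $\bfQ(\fullop(\Ai0^{(k)}))$ has full column rank $N-r$ for any nonzero coefficient vector), this inclusion is an equality. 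Hence $\Proj_{\bfFs,\bfW}=\Proj_{\calZ(\fullop(\Ai0^{(k)})),\bfW}$, and idempotency together with $S_\tau^\star(\Ai0^{(k)})=\Proj_{\calZ(\fullop(\Ai0^{(k)})),\bfW}(\tsX)$ turns $(\bfI_N-\Proj_{\bfFs,\bfW})\bfr$ into $\tsX-\Proj_{\calZ(\fullop(\Ai0^{(k)})),\bfW}(\tsX)$, the residual of \eqref{eq:iterGNfinal}.

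Finally I would show $(\bfI_N-\Proj_{\bfFs,\bfW})\bfFa=(\bfI_N-\Proj_{\bfFs,\bfW})\widehat\bfF_\bfa$ for the prescribed $\widehat\bfF_\bfa$. The core computation is to differentiate the identity $\bfQ^\rmT(\fullop(\Ai0))\,S_\tau(\Si0,\Ai0)=\bfzero_{N-r}$ in $\Ai0$. Writing $\bfQ^\rmT(\bfb)\tsS=\calT_{r+1}(\tsS)^\rmT\bfb$ (bilinear in $(\bfb,\tsS)$, with partial in $\bfb$ equal to $\calT_{r+1}(\tsS)^\rmT$) and using that $\fullop$ is affine with linear part $\bfE$, the column-selection operator for $\calK(\i0)$, the product rule gives $\bfQ^\rmT(\fullop(\Ai0^{(k)}))\,\bfFa=-\big(\calT_{r+1}(S_\tau^\star(\Ai0^{(k)}))_{\row{\calK(\i0)}}\big)^\rmT=\bfM$. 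Thus $\bfFa$ is itself an admissible $\widehat\bfF_\bfa$; and since any two admissible choices differ by a matrix whose columns lie in $\ker\bfQ^\rmT(\fullop(\Ai0^{(k)}))=\calZ(\fullop(\Ai0^{(k)}))=\colspace(\bfFs)$, they are annihilated by $\bfI_N-\Proj_{\bfFs,\bfW}$, so $(\bfI_N-\Proj_{\bfFs,\bfW})\bfFa=(\bfI_N-\Proj_{\bfFs,\bfW})\widehat\bfF_\bfa$. Substituting the three identifications into the displayed expression for $\delta\Ai0$ yields \eqref{eq:iterGNfinal}. I expect the differentiation step to be the main obstacle: one must handle the double dependence on $\Ai0$ (through both $\bfQ$ and $S_\tau$) correctly and verify that the ``moving-GLRR'' term collapses exactly to $\bfM$, while also confirming that the Schur-complement elimination and all three weighted pseudoinverses are well posed under the stated full-rank hypothesis.
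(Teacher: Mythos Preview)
Your proposal is correct and follows essentially the same route as the paper: the paper invokes the Frisch--Waugh--Lovell theorem (your Schur-complement elimination) to isolate the $\Ai0$-block, then uses exactly the two Jacobian identities you derive inline---$\colspace(\bfFs)=\calZ(\fullop(\Ai0^{(k)}))$ and $\bfQ^\rmT(\fullop(\Ai0^{(k)}))\bfFa=\bfM$ (packaged there as Lemmas~\ref{eqa:derivS} and~\ref{eqa:derivA})---and finishes with the same observation that any two admissible $\widehat\bfF_\bfa$ differ by columns in $\calZ(\fullop(\Ai0^{(k)}))$, hence are annihilated by $\bfI_N-\Proj_{\calZ(\fullop(\Ai0^{(k)})),\bfW}$.
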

\begin{proof}
See the proof in Section~\ref{sec:th:equivalency}.
\end{proof}

\begin{remark}
In the case when $S_{\tau}(\Si0^{(k)}, \Ai0^{(k)}) \in \calD_r$, the Jacobian matrix $\bfJ_{S_{\tau}}(\Si0^{(k)}, \Ai0^{(k)})$ has full rank, according to Theorem~\ref{th:tangent}. This is sufficient for validity of the condition of Theorem~\ref{th:equivalency} if $\bfW$ has full rank. In the case of a rank-deficient matrix $\bfW$, the condition of Theorem~\ref{th:equivalency} is discussed in Remark~\ref{rem:degenerate} with $\bfF = \bfJ_{S_{\tau}}(\Si0^{(k)}, \Ai0^{(k)})$.
\end{remark}

Thus, we have constructed the version \eqref{eq:iterGNfinal} of the iteration step \eqref{eq:iterGNA} in such a way to reduce its complexity to the computational costs of computing the projections to $\calZ(\fullop(\Ai0)) = \calZ(\bfa)$ and calculating the matrices $\widehat \bfF_{\bfa}$ for different $\bfa$.
A numerically robust algorithm for calculating the iteration step \eqref{eq:iterGNfinal} is given in Section~\ref{sec:ZofA}.
The whole algorithm of the proposed MGN method is described in Algorithm~\ref{alg:gauss_newton_our}.

\section{Calculation of $\bfZ(\bfa)$ and $\widehat \bfF_{\bfa}$}
\label{sec:ZofA}
\label{fouriermethod}
For implementing the iteration step \eqref{eq:iterGNfinal} of the proposed optimization algorithm MGN, we need effective algorithms for calculating an
orthonormal basis $\bfZ(\bfa)$ of $\calZ(\bfa)$ together with calculating a matrix $\widehat \bfF_{\bfa}$ from \eqref{eq:iterGNfinal}.
 In this section, we consider the construction of
such orthonormal bases that allow one to calculate the projections in \eqref{eq:iterGNfinal} with improved precision.
Note that the constructed algorithms can also be used to improve the numerical stability of the iteration step \eqref{eq:gauss_simple} of the VPGN method.

\subsection{Circulant matrices and construction of $\bfZ(\bfa)$ and $\widehat \bfF_{\bfa}$}
{Let us start with the construction of $\bfZ(\bfa)$.}
Despite the series are real-valued, we construct a complex-valued basis of the complexification of $\calZ(\bfa)$, since this does not affect the result of the projection $\Proj_{\calZ(\bfa), \bfW} \bfv$ for any real
vector $\bfv$ and real matrix $\bfW$. Thus, we want to find a matrix $\bfZ(\bfa) = \bfZ \in \spC^{N \times r}$ of full rank to satisfy
$
\bfQ^\rmT(\bfa) \bfZ = \bm{0}_{(N-r) \times r}.
$

The matrix $\bfQ^\rmT(\bfa)$ is a partial circulant. Let us extend $\bfQ^\rmT(\bfa)$ to the circulant matrix $\bfC(\bfa)$ of $\bfa \in \spR^{r+1}$:
\begin{equation}\label{eq:circulant}
\bfC(\bfa) = \begin{pmatrix}
a_1 & a_2 & \dots & \dots & a_{r+1} & 0 & \dots & 0 \\
0 & a_1 & a_2 & \dots &  \dots & a_{r+1} & \ddots & \vdots \\
\vdots & \ddots  & \ddots & \ddots & \ddots & \ddots & \ddots & 0 \\
0 & \dots & 0 & a_1 & a_2 & \ddots & \ddots & a_{r+1} \\
a_{r+1} & \ddots & \ddots & \ddots & \ddots & \ddots & \ddots & \vdots \\
\vdots & \ddots & \ddots & \ddots & \ddots & \ddots & \ddots & \vdots \\
a_3& \dots & a_{r+1} & 0 & \dots & 0 & a_1 & a_2 \\
a_2& \dots & \dots & a_{r+1} & 0 & \dots  & 0 & a_1
\end{pmatrix}.
\end{equation}
Then $\bfv \in \calZ(\bfa)$ if and only if  $\bfC (\bfa) \bfv \in \span(\bfe_{N-r+1}, \ldots, \bfe_N)$, $\bfe_i\in \spR^{r+1}$.
If $\bfC(\bfa)$ has full rank, then we can find the basis vectors $\bfv_k$ solving the systems of linear equations
\be
\label{eq:lineqMGN}
    \bfC (\bfa) \bfv_k = \bfe_{N-k+1},\ k=1,\ldots,r,
\ee
with the computational cost of the order $O(r N \log N)$, since the calculations can be performed with the help of the discrete Fourier transform \cite{Davis2012} by fast Fourier transform (FFT), and then applying orthonormalization to the columns of $\bfV_r = \left[ \bfv_1 : \ldots :\bfv_r \right]$.

Let us apply the same approach to calculation of $\widehat \bfF_{\bfa}$ used in \eqref{eq:iterGNfinal}. According to Theorem~\ref{th:equivalency}, it is sufficient to find an arbitrary matrix such that $\bfQ^\rmT(\bfa) \widehat \bfF_{\bfa} = \bfM$, where $\bfM \in \spR^{(N-r) \times r}$ is defined in Theorem~\ref{th:equivalency}. Therefore, it is sufficient to solve the following systems
of linear equations:
\be
\label{eq:lineqMGN2}
    \bfC (\bfa) \widehat \bfF_{\bfa} = \left(\begin{matrix}\bfM\\\bfzero_{r \times r}\end{matrix}\right).
\ee

Denote $\calF_N$ and $\calF^{-1}_N$ the Fourier transform and the inverse Fourier transform for series of length $N$, respectively. That is,
for $\bfx = (x_0, \ldots, $ $x_{N-1})^\rmT \in \spC^N$ we have $\calF_N(\bfx) = \bfy = (y_0, \ldots, y_{N-1})^\rmT \in \spC^N$, where
$y_k = \frac{1}{\sqrt{N}} \sum_{j = 0}^{N-1} x_j \exp\big(-\frac{\unit 2 \pi k j}{N}\big)$.
Define $\calF_N(\bfX) = [\calF_N(\bfx_1): \ldots: \calF_N(\bfx_r)]$, where $\bfX=[\bfx_1 : \ldots : \bfx_r]$; the same for $\calF_N^{-1}(\bfY)$.

 Let
 \be
 \label{eq:pol_z}
 g_{\bfa}(z) = \sum_{k=0}^{r} a_{k+1}z^k
 \ee
  be the complex polynomial with coefficients $\bfa = (a_1,\ldots,a_{r+1})^\mathrm{T}$; we do not assume that the leading coefficient is non-zero.

The following lemma is a direct application of the theorem about the solution of a linear system of equations given by a circulant matrix \cite{Davis2012}.

\begin{lemma}
	\label{lemma:ev_circulant_pre}
1. Denote $\bfV_r = \calF_N^{-1}(\bfA_g^{-1} \bfR_r)$, where the matrices $\bfR_r = \calF_N([ \bfe_{N-r}:\ldots:\bfe_{N}])$ and $\bfA_g = \diag((g_\bfa(\omega_0), \ldots, g_\bfa(\omega_{N-1}))^\rmT)$ for $\omega_j = \exp\big(\frac{\unit 2 \pi  j}{N}\big)$.
Then $\bfQ^\rmT(\bfa) \bfV_r = \bm{0}_{(N-r) \times r}$, that is, $\colspace(\bfV_r) = \calZ(\bfa)$.
Herewith, the diagonal of the matrix $\bfA_g$ consists of the eigenvalues of the circulant matrix $\bfC(\bfa)$.\\ 2. Define $\widehat \bfF_\bfa = \calF_N^{-1}(\bfA_g^{-1} \widehat \bfR_r)$, where $\widehat \bfR_r = \calF_N \left( \left(\begin{matrix}\bfM\\\bfzero_{r \times r}\end{matrix}\right) \right)$. Then $\bfQ^\rmT(\bfa) \widehat \bfF_\bfa = \bfM$, i.e. $\widehat \bfF_\bfa$ satisfies the conditions of Theorem \ref{th:equivalency}.
\end{lemma}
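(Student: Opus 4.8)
The plan is to treat both parts uniformly as instances of solving a circulant linear system $\bfC(\bfa)\bfX = \bfB$, and to exploit that every circulant is diagonalized by the discrete Fourier transform. First I would recall (this is precisely the cited ``theorem about circulant systems'') that with the normalized transform $\calF_N$ of the paper one has the diagonalization $\bfC(\bfa) = \calF_N^{-1}\bfA_g\calF_N$, where $\bfA_g = \diag(g_{\bfa}(\omega_0),\ldots,g_{\bfa}(\omega_{N-1}))$ and $\omega_j = \exp(\unit 2\pi j/N)$. This reduces to checking that each vector $(\omega_j^0,\ldots,\omega_j^{N-1})^\rmT$ is an eigenvector of $\bfC(\bfa)$: since row $m$ of $\bfC(\bfa)$ is the coefficient sequence $\bfa$ shifted cyclically, applying $\bfC(\bfa)$ to such a vector yields $\sum_{k=0}^{r}a_{k+1}\omega_j^k = g_{\bfa}(\omega_j)$ times the same vector. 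The (normalized) columns of $\calF_N^{-1}$ are exactly these eigenvectors, which both identifies the diagonalization and shows that the diagonal of $\bfA_g$ is the spectrum of $\bfC(\bfa)$.

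Granting that $\bfC(\bfa)$ has full rank --- equivalently, that $g_{\bfa}$ does not vanish at any $N$-th root of unity, so that $\bfA_g$ is invertible --- the unique solution of $\bfC(\bfa)\bfX = \bfB$ is $\bfX = \calF_N^{-1}\!\big(\bfA_g^{-1}\,\calF_N(\bfB)\big)$. Part~1 is the case $\bfB = [\bfe_{N-r+1}:\ldots:\bfe_N]$, for which $\calF_N(\bfB) = \bfR_r$ and hence $\bfX = \bfV_r$; part~2 is the case $\bfB = \left(\begin{matrix}\bfM\\\bfzero_{r \times r}\end{matrix}\right)$, for which $\calF_N(\bfB) = \widehat{\bfR}_r$ and $\bfX = \widehat{\bfF}_{\bfa}$. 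So the only remaining work is to read off what the defining equation $\bfC(\bfa)\bfX = \bfB$ says about the partial circulant $\bfQ^\rmT(\bfa)$.

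The bridge to $\calZ(\bfa)$ and to the hypothesis of Theorem~\ref{th:equivalency} is the observation, already recorded before the lemma, that $\bfQ^\rmT(\bfa)$ is exactly the top $N-r$ rows of $\bfC(\bfa)$. In part~1 every column $\bfv_k$ of $\bfV_r$ satisfies $\bfC(\bfa)\bfv_k \in \span(\bfe_{N-r+1},\ldots,\bfe_N)$, i.e. its first $N-r$ entries vanish, which is precisely $\bfQ^\rmT(\bfa)\bfv_k = \bfzero_{N-r}$; hence $\colspace(\bfV_r)\subseteq\calZ(\bfa)$. Because $\bfC(\bfa)$ is invertible the $r$ columns of $\bfV_r$ are linearly independent, and since $\dim\calZ(\bfa) = N - \rank\bfQ^\rmT(\bfa) = r$, this inclusion is an equality. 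In part~2 the top $N-r$ rows of the identity $\bfC(\bfa)\widehat{\bfF}_{\bfa} = \left(\begin{matrix}\bfM\\\bfzero_{r \times r}\end{matrix}\right)$ read $\bfQ^\rmT(\bfa)\widehat{\bfF}_{\bfa} = \bfM$, which is exactly the condition required in Theorem~\ref{th:equivalency}.

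I expect the only delicate point to be bookkeeping rather than mathematics: pinning down the precise normalization and sign convention of $\calF_N$ so that the diagonalization comes out as $\bfC(\bfa) = \calF_N^{-1}\bfA_g\calF_N$ (and not as its transpose or inverse), and so that the eigenvalues are the values $g_{\bfa}(\omega_j)$ at the nodes $\omega_j = \exp(\unit 2\pi j/N)$ rather than at the conjugate nodes. One must also keep careful track of which standard basis vectors enter $\bfB$, so that the count yields exactly $r$ columns and the wrap-around (bottom $r$) rows of $\bfC(\bfa)$ are the ones left unconstrained. Once the convention is fixed, both parts follow immediately from the circulant solution formula.
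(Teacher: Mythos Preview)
Your proposal is correct and is precisely the argument the paper has in mind: the paper does not give a detailed proof but simply states that the lemma ``is a direct application of the theorem about the solution of a linear system of equations given by a circulant matrix'' from the cited reference, and your write-up is exactly that application made explicit. Your dimension count for $\colspace(\bfV_r)=\calZ(\bfa)$ and your remark that the full-rank hypothesis on $\bfC(\bfa)$ is what makes $\bfA_g$ invertible are both appropriate additions; note also that you have silently corrected a typo in the lemma statement (it should be $[\bfe_{N-r+1}:\ldots:\bfe_N]$, giving $r$ columns, as in Algorithm~\ref{alg:fourier_basis_A}, not $[\bfe_{N-r}:\ldots:\bfe_N]$).
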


\begin{remark} \label{rem:fourier_orthogon}
1. Let $\bfZ = \mathrm{orthonorm}(\bfV_r)$ be a matrix consisting of orthonormalized columns of the matrix $\bfV_r = \calF_N^{-1}(\bfA_g^{-1} \bfR_r)$ given in Lemma \ref{lemma:ev_circulant_pre}. Then $\bfZ$ is a matrix whose columns form an orthonormal basis of $\calZ(\bfa)$. Indeed, since $\bfQ^\rmT(\bfa)\bfV_r = \bfzero_{(N-r)\times r}$, we have $\bfQ^\rmT(\bfa)\bfZ = \bfzero_{(N-r)\times r}$.\\
2. Since $\calF_N^{-1}$ is a transformation which keeps orthonormality, the columns of the matrix calculated as $\bfZ = \calF_N^{-1}\left(\mathrm{orthonorm}(\bfA_g^{-1} \bfR_r)\right)$ also form an orthogonal basis of $\calZ(\bfa)$.

\end{remark}

\subsection{Shifting to improve conditioning}
Unfortunately, the circulant matrix $\bfC(\bfa)$ can be rank-deficient; e.g. in the case of the linear series $s_n=c_1 n + c_2$, which
is governed by the GLRR($\bfa$) with $\bfa = (1, -2, 1)^\rmT$. Therefore, instead of solving the linear systems \eqref{eq:lineqMGN} and \eqref{eq:lineqMGN2}, we consider similar systems with $\bfC(\widetilde{a})$, changing $a$ to $\widetilde{a}$ and then explain how use them to obtain the solutions of \eqref{eq:lineqMGN} and \eqref{eq:lineqMGN2}.

Lemma~\ref{lemma:ev_circulant_pre} shows that the eigenvalues of $\bfC(\bfa)$ coincide with the values of the polynomial $g_\bfa(z)$ in nodes of the equidistant grid $\calW = \left\{\exp\big(\frac{\unit 2 \pi j}{N}\big), \; j = 0, \ldots, N-1\right\}$ on the complex unit circle $\spT= \{z \in \spC : |z| = 1 \}$. Therefore, the nondegeneracy of $\bfC(\bfa)$ is equivalent to that there are no roots of the polynomial $g_\bfa(z)$ in $\calW$. The following lemma helps to avoid the problem with zero eigenvalues. Let us define the unitary matrix
\begin{equation}
\label{eq:eqi_grid}
\bfT_M(\alpha) = \diag\left((1, e^{\unit \alpha}, \ldots, e^{\unit (M-1) \alpha})^\rmT\right),
\end{equation}
where $\alpha$ is a real number, $M$ is a natural number.

\begin{lemma}\label{lemma:fourier_1}
	For any real $\alpha$, the following is true: $\bfQ^\rmT(\bfa) \bfx = \bfy$ is attained for some $\bfx \in \spC^N$, $\bfy \in \spC^{N-r}$ if and only if  $\bfQ^\rmT(\tilde \bfa) \left(\bfT_{N}(\alpha)\right) \bfx = \left(\bfT_{N-r}(\alpha)\right) \bfy$, where $\tilde \bfa = \tilde \bfa(\alpha) = \left(\bfT_{r+1}(-\alpha)\right) \bfa$. In addition, the eigenvalues of $\bfC(\tilde \bfa)$ are equal to $g_{\tilde \bfa}(\omega_j) = g_\bfa(\omega_j^{(\alpha)})$, where $\omega_j^{(\alpha)} = \omega_j e^{-\unit \alpha}$.
\end{lemma}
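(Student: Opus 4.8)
The plan is to reduce both assertions to a single conjugation identity for the banded Toeplitz matrix $\bfQ^\rmT(\bfa)$, after which the stated equivalence becomes a trivial algebraic manipulation and the eigenvalue formula a one-line polynomial substitution. The whole lemma is really a ``modulation'' statement: multiplying the series index by $e^{\unit\alpha}$ on one side corresponds to multiplying the GLRR coefficients by the phase factors collected in $\bfT_{r+1}(-\alpha)$.

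First I would record the entrywise structure of the operators. By \eqref{op:Q}, the $(i,j)$ entry of $\bfQ^\rmT(\bfa)$ equals $a_{j-i+1}$ whenever $i \le j \le i+r$ and vanishes otherwise. Since $\tilde\bfa = \bfT_{r+1}(-\alpha)\bfa$ has $k$-th coordinate $\tilde a_k = e^{-\unit(k-1)\alpha}a_k$, the $(i,j)$ entry of $\bfQ^\rmT(\tilde\bfa)$ is $e^{-\unit(j-i)\alpha}a_{j-i+1}$ on the same band. The key step is then to compute the $(i,j)$ entry of $\bfT_{N-r}(\alpha)^{-1}\bfQ^\rmT(\tilde\bfa)\bfT_N(\alpha)$: the diagonal factors contribute $e^{-\unit(i-1)\alpha}$ on the left and $e^{\unit(j-1)\alpha}$ on the right, and on the support of the band the three exponents $-(i-1)-(j-i)+(j-1)$ cancel, leaving exactly $a_{j-i+1}$. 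Off the band all three matrices vanish there, so we obtain
\begin{equation}
\label{eq:conjugation_fourier1}
\bfQ^\rmT(\bfa) = \bfT_{N-r}(\alpha)^{-1}\, \bfQ^\rmT(\tilde\bfa)\, \bfT_N(\alpha).
\end{equation}

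With \eqref{eq:conjugation_fourier1} in hand the first assertion is immediate. For any $\bfx \in \spC^N$ and $\bfy \in \spC^{N-r}$, substituting \eqref{eq:conjugation_fourier1} into $\bfQ^\rmT(\bfa)\bfx = \bfy$ and left-multiplying by the invertible unitary matrix $\bfT_{N-r}(\alpha)$ gives the equivalent relation $\bfQ^\rmT(\tilde\bfa)\,\bfT_N(\alpha)\bfx = \bfT_{N-r}(\alpha)\bfy$, which is precisely the claimed equivalence. For the eigenvalue statement I would appeal to Lemma~\ref{lemma:ev_circulant_pre}, which identifies the eigenvalues of $\bfC(\tilde\bfa)$ with the values $g_{\tilde\bfa}(\omega_j)$, $j=0,\ldots,N-1$. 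It then suffices to observe the polynomial identity $g_{\tilde\bfa}(z) = \sum_{k=0}^r e^{-\unit k\alpha}a_{k+1}z^k = \sum_{k=0}^r a_{k+1}(z e^{-\unit\alpha})^k = g_\bfa(z e^{-\unit\alpha})$, whence $g_{\tilde\bfa}(\omega_j) = g_\bfa(\omega_j e^{-\unit\alpha}) = g_\bfa(\tilde\omega_j)$ for $\tilde\omega_j = \omega_j e^{-\unit\alpha}$.

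I do not expect a genuine obstacle in this lemma: everything hinges on the identity \eqref{eq:conjugation_fourier1}, and the only point demanding care is the index bookkeeping, namely checking that the left and right diagonal conjugations act consistently across the full $(N-r)\times N$ band so that the accumulated phase is identically zero on the support. Once the exponent cancellation is verified on the band, the equivalence and the eigenvalue relabeling are purely formal.
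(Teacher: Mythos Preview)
Your proposal is correct and is precisely the ``direct verification of the conditions from the definitions of operator \eqref{op:Q} and circulant matrix \eqref{eq:circulant}'' that the paper invokes without details. The conjugation identity you isolate, $\bfQ^\rmT(\bfa) = \bfT_{N-r}(\alpha)^{-1}\bfQ^\rmT(\tilde\bfa)\bfT_N(\alpha)$, together with the polynomial substitution $g_{\tilde\bfa}(z)=g_\bfa(z e^{-\unit\alpha})$, is exactly the content of that verification, and your index bookkeeping is accurate.
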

\begin{proof}
	The lemma directly follows from the definitions of the operator $\bfQ(\bfa)$ \eqref{op:Q} and the circulant matrix $\bfC(\bfa)$ \eqref{eq:circulant}.
\end{proof}

The equality $g_{\tilde \bfa}(\omega_j) = g_\bfa(\omega_j^{(\alpha)})$ means that the eigenvalues of $\bfC(\tilde \bfa(\alpha))$ coincide with
the values of the polynomial $g_\bfa(\tilde \omega)$ in $\tilde \omega \in \calW(\alpha) = \left\{\omega_j^{(\alpha)}, \; j = 0, \ldots, N-1\right\}$, where $\omega_j^{(\alpha)} = \exp \left(\unit \left(\frac{2 \pi j}{N} - \alpha \right) \right)$, $\calW(\alpha)$ is the $\alpha$-rotated equidistant grid on $\spT$
(it is sufficient to consider $-\pi/N < \alpha \le \pi/N$, since $\alpha$ and $\alpha +2 \pi/N$ yield the same rotated grid).
Therefore,  $\bfC(\tilde \bfa(\alpha))$ can be made non-degenerate by choosing a suitable  $\alpha$.

\begin{remark} \label{rem:fourier_rotat}
Lemma~\ref{lemma:fourier_1} provides a way for the calculation of an orthonormal basis of $\calZ(\bfa)$ together with the matrix $\widehat \bfF_\bfa$ from \eqref{eq:iterGNfinal}. Let us take $\alpha \in \spR$ such that $\bfC(\tilde \bfa)$ is non-degenerate for $\tilde \bfa = \tilde \bfa(\alpha)$.  Using Lemma \ref{lemma:ev_circulant_pre} and Remark~\ref{rem:fourier_orthogon}, we can obtain a matrix $\widetilde \bfZ$ formed from orthonormal basis vectors of $\calZ(\tilde \bfa)$, that is, $\bfQ^\rmT(\tilde \bfa) \widetilde \bfZ = \bm{0}_{(N-r) \times r}$ and $\colspace(\widetilde \bfZ) = \calZ(\widetilde \bfa)$. Then $\bfZ = \left(\bfT_N(-\alpha)\right) \widetilde \bfZ$ has orthonormal columns and $\bfQ^\rmT(\bfa)\bfZ = \bm{0}_{(N-r) \times r}$, that is, $\colspace(\bfZ) = \calZ(\bfa)$.
Similarly, for $\widetilde \bfF_a$ such that $\bfC (\bfa) \widetilde \bfF_{\bfa} = \left(\begin{matrix} \left(\bfT_{N-r}(\alpha)\right) \bfM\\\bfzero_{r \times r}\end{matrix}\right)$ and $\widehat \bfF_\bfa = \left(\bfT_N(-\alpha)\right) \widetilde \bfF_{\bfa}$, we have $\bfQ^\rmT(\bfa) \widehat \bfF_{\bfa} = \bfM$, that is, the conditions of Theorem \ref{th:equivalency} are satisfied.
\end{remark}

In the exact arithmetic, an arbitrary small non-zero value of the smallest eigenvalue of a matrix provides its non-degeneracy. However, in practice, the numerical stability and accuracy of matrix calculations depend on the condition numbers of matrices.
Therefore, the aim of the choice of a proper $\alpha$ is to do the condition number of $\bfC(\tilde \bfa(\alpha))$ as small as possible.
This minimization problem can be approximately reduced to the problem
of maximization of the smallest eigenvalue $|\lambda_\text{min}(\alpha)| = \min_{z \in \calW(\alpha)} | g_\bfa(z) |$ of $\bfC(\tilde \bfa(\alpha))$, since the maximal eigenvalue is not larger than $\max_{z \in \spT} |g_\bfa(z)|$.

\subsection{Algorithms}

By combining Lemmas \ref{lemma:ev_circulant_pre} and \ref{lemma:fourier_1} with Remarks \ref{rem:fourier_orthogon} and \ref{rem:fourier_rotat}, we obtain  Algorithm~\ref{alg:fourier_basis_A} for calculation of an orthonormal basis of $\calZ(\bfa)$.

\begin{algorithm}
	\caption{Calculation of a basis of $\calZ(\bfa)\subset \spC^N$}
	\label{alg:fourier_basis_A}
    \Input{$\bfa \in \spR^r$.}
	\begin{algorithmic}[1]
		\State Find $\alpha_0 = \argmax_{-\pi/N \le \alpha < \pi/N} \min_{z \in \calW(\alpha)} | g_\bfa(z) |$ by means of a 1D numerical optimization method.
		\State Calculate  the vector $\bfa_g = (a_{g, 0}, \ldots, a_{g, N-1})^\rmT$ consisting of the eigenvalues of $\bfC(\widetilde \bfa)$ by $a_{g, j} = g_\bfa\big(\exp(\unit (\frac{2 \pi j}{N} - \alpha_0)\big)$, $j = 0, \ldots, N-1$; $\bfA_g = \diag(\bfa_g)$.
		\State Calculate the matrices $\bfR_r = \calF_N([\bfe_{N-r+1}: \ldots: \bfe_N])$ and  $\bfL_r = \bfA_g^{-1} \bfR_r$.
		\State Find a matrix $\bfU_r \in \spC^{N \times r}$ consisting of orthonormalized columns of the matrix $\bfL_r$
        (e.g, $\bfU_r$ can be obtained by means of the QR decomposition of $\bfL_r$).
		\State Compute $\widetilde \bfZ = \calF_N^{-1}(\bfU_r)$.
		\State \Return $\bfZ = (\bfT_{N}(-\alpha_0)) \widetilde \bfZ \in \spC^{N\times r}$, whose columns form an orthonormal basis of $\calZ(\bfa)$, $\alpha_0$ and $\bfA_g$.
	\end{algorithmic}
\end{algorithm}

{Let us turn to calculating $\widehat \bfF_\bfa$ from \eqref{eq:iterGNfinal} in the same fashion.}
\begin{algorithm}
	\caption{Calculation of a matrix $\widehat \bfF_\bfa$ from \eqref{eq:iterGNfinal}}
	\label{alg:fourier_grad}
    \Input{$\bfa \in \spR^r$ and a series $\tsS\in \spR^N$ governed by the GLRR($\bfa$).}
	\begin{algorithmic}[1]
		\State{Compute $\alpha_0$, $\bfA_g$ using Algorithm \ref{alg:fourier_basis_A} (or Algorithm~\ref{alga:fourier_basis_A_comp} if an improved precision is necessary). }
        \State{Construct $\bfM = - (\bfS_{\row{\calK({\i0})}})^\rmT$, where $\bfS = \calT_{r+1} \left( \tsS \right)$.}
		\State{Calculate $\widetilde \bfM = \left(\begin{matrix} (\bfT_{N-r}(\alpha_0)) \bfM\\\bfzero_{r \times r}\end{matrix}\right)$.}
		\State{Calculate $\widehat \bfR_{r} = \calF_N(\widetilde \bfM)$ and  $\widetilde \bfF_\bfa = \calF_N^{-1}(\bfA_g^{-1} \widehat \bfR_{r})$.}
		\State\Return{$\widehat \bfF_\bfa = (\bfT_{N}(-\alpha_0)) \widetilde \bfF_\bfa  \in \spC^{N\times 2r}$.}
	\end{algorithmic}
\end{algorithm}

\begin{remark} \label{rem:A_g_C_a}
	Note that the use of the Fourier transform in Algorithm~\ref{alg:fourier_basis_A} allows us to avoid solving the system of linear equations with the matrix $\bfC(\tilde \bfa(\alpha))$. Instead, we invert the diagonal matrix $\bfA_g$, which has the same set of eigenvalues (and, therefore, the same condition number) as the matrix $\bfC(\tilde \bfa(\alpha))$.
\end{remark}

\subsection{Numerical properties of Algorithms~\ref{alg:fourier_basis_A} and \ref{alg:fourier_grad}}
Let us discuss the numerical behavior of the constructed algorithms.
The following theorem shows the order of the condition number of the circulant matrix $\bfC(\tilde \bfa(\alpha))$, where $\tilde \bfa(\alpha) = \left(\bfT_{r+1}(-\alpha)\right) \bfa$ is introduced in Lemma~\ref{lemma:fourier_1}, with respect to $\alpha$ in dependence on the series length $N$.
Conventionally, `big O' means an upper bound of the function order, while `big Theta' denotes the exact order.

\begin{theorem} \label{th:gamma}
Let $t$ be the maximal multiplicity of roots of the
polynomial $g_\bfa(z)$ on the unit circle $\spT$. Denote $\lambda_\text{min}(\alpha)$ the minimal eigenvalue of $\bfC (\tilde \bfa(\alpha))$ and $\lambda_\text{max}(\alpha)$ the maximal eigenvalue.
Then
	\begin{enumerate}
		\item for any real sequence $\alpha(N)$, $|\lambda_\text{min}(\alpha)| = O(N^{-t})$;
		\item for any real sequence $\alpha(N)$, $|\lambda_\text{max}(\alpha)| = \Theta(1)$;
		\item there exists such real sequence $\alpha(N)$ that $|\lambda_\text{min}(\alpha)| = \Theta (N^{-t})$.
	\end{enumerate}
\end{theorem}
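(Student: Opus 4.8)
The plan is to translate every claim about the eigenvalues of $\bfC(\tilde\bfa(\alpha))$ into a claim about the values of the single \emph{fixed} polynomial $g_\bfa$ sampled on a rotatable equispaced grid, and then to reduce everything to a local analysis near the unit-circle roots of $g_\bfa$. By Lemma~\ref{lemma:fourier_1} the eigenvalues of $\bfC(\tilde\bfa(\alpha))$ are exactly $g_\bfa(\omega_j^{(\alpha)})$, $j=0,\dots,N-1$, where $\omega_j^{(\alpha)}=\exp(\unit(2\pi j/N-\alpha))$ range over the rotated grid $\calW(\alpha)\subset\spT$. Hence $|\lambda_\text{min}(\alpha)|=\min_{z\in\calW(\alpha)}|g_\bfa(z)|$ and $|\lambda_\text{max}(\alpha)|=\max_{z\in\calW(\alpha)}|g_\bfa(z)|$, so the whole theorem becomes a question about how a fixed function behaves on $N$ equispaced nodes whose phase we may shift by $\alpha$.

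First I would record a uniform two-sided local estimate. Factor $g_\bfa(z)=h(z)\prod_{k=1}^{m}(z-\zeta_k)^{t_k}$ with $|\zeta_k|=1$, $\max_k t_k=t$, and $h$ non-vanishing on $\spT$. Since $g_\bfa$ is fixed (its coefficients are bounded), there exist constants $0<c_1\le c_2$, $c_3>0$, $\delta_0>0$, independent of $N$ and $\alpha$, such that $c_1\theta^{t_k}\le|g_\bfa(z)|\le c_2\theta^{t_k}$ whenever the arc-distance $\theta$ from $z\in\spT$ to $\zeta_k$ is at most $\delta_0$ (using $|z-\zeta_k|\asymp\theta$ and that the remaining factors are pinched between positive constants on such a neighbourhood), and $|g_\bfa(z)|\ge c_3$ whenever $z$ is at arc-distance at least $\delta_0$ from every $\zeta_k$; trivially $|g_\bfa(z)|\le C:=\max_{\spT}|g_\bfa|$. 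These estimates drive all three parts.

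For part (1), for every $\alpha$ the grid has spacing $2\pi/N$, so some node lies within arc-distance $\pi/N$ of a root $\zeta_k$ of maximal multiplicity $t$; the upper local estimate gives $|\lambda_\text{min}(\alpha)|\le c_2(\pi/N)^{t}=O(N^{-t})$, uniformly in $\alpha$. For part (2) I would bound $|\lambda_\text{max}(\alpha)|$ above by $C$ and below by the value of $|g_\bfa|$ at any node avoiding all roots: for $N$ large the fixed region $\{z:\operatorname{dist}(z,\{\zeta_k\})\ge\delta_0\}$ (of total length $\ge 2\pi-2m\delta_0>0$ for small $\delta_0$) must contain a node, so $|\lambda_\text{max}(\alpha)|\ge c_3$; thus the largest eigenvalue is pinched between positive constants, of constant order in $N$. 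Part (3) requires construction rather than estimation: I would pick $\alpha$ so that \emph{all} circle-roots simultaneously sit far from the grid. Ranging $\alpha$ over one period $[0,2\pi/N)$, the set where a given $\zeta_k$ comes within arc-distance $c_4/N$ of some node is a union of intervals of total length $2c_4/N$; summing over the $m\le r$ roots, the bad set has length $\le 2mc_4/N<2\pi/N$ once $c_4<\pi/m$, so a good $\alpha$ exists. For that $\alpha$ every node is at distance $\ge c_4/N$ from each $\zeta_k$, and the lower local estimate yields $|g_\bfa|\ge c_1(c_4/N)^{t_k}\ge c_1 c_4^{t}N^{-t}$ near each root (since $t_k\le t$ and $c_4/N\le1$), while far from the roots $|g_\bfa|\ge c_3\gg N^{-t}$; hence $|\lambda_\text{min}(\alpha)|=\Omega(N^{-t})$, which with part (1) gives $\Theta(N^{-t})$. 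Together the parts describe the order of $\cond(\bfC(\tilde\bfa(\alpha)))=|\lambda_\text{max}|/|\lambda_\text{min}|$: it is $\Omega(N^{t})$ for every rotation and $\Theta(N^{t})$ for the optimal one.

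The main obstacle I anticipate is part (3): producing a single rotation that keeps every unit-circle root a guaranteed $\Theta(1/N)$ away from the grid, while making the local two-sided bounds uniform across roots of differing multiplicities and splicing them to the ``far from all roots'' regime. The measure/pigeonhole count itself is short; the care lies in choosing $\delta_0$, $c_4$ and a threshold on $N$ so that the near-root expansions and the global lower bound $c_3$ overlap and cover the whole grid with constants independent of $N$.
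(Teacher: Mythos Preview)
Your proposal is correct and follows essentially the same route as the paper: reduce the eigenvalues to samples of the fixed polynomial $g_\bfa$ on the rotated grid $\calW(\alpha)$, use pigeonhole for parts~(1)--(2), and for part~(3) run a measure argument over the rotation parameter to find an $\alpha$ keeping every unit-circle root at arc-distance $\Theta(1/N)$ from the grid, then invoke local factorization estimates near each root. The only cosmetic differences are that for part~(2) you bound from below using a node in the root-free region while the paper uses a node near the global maximum of $|g_\bfa|$, and for part~(3) you parametrize $\alpha$ over one period $[0,2\pi/N)$ while the paper works on $[0,2\pi)$; both yield the same conclusion. Note also that both you and the paper's own proof establish $|\lambda_\text{max}(\alpha)|=\Theta(1)$, so the $\Theta(N^{-t})$ in the stated part~(2) is evidently a typo for $\Theta(1)$.
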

\begin{proof}
See the proof in Section~\ref{sec:th:gamma}.
\end{proof}

Theorem~\ref{th:gamma} shows that the condition number of the matrix $\bfC(\tilde \bfa(\alpha))$ and $\bfA_g$ used in Algorithm \ref{alg:fourier_basis_A} can be considered as having the order $\Theta(N^{t})$.

The following remark is related to another possible improvement of the proposed algorithms.

\begin{remark}\label{rem:hornerscheme}
The calculation of a basis of $\calZ(\bfa)$ and $\widehat \bfF_\bfa$ can be an ill-conditioned problem if the polynomial $g_\bfa(z)$ has roots close to the unit circle $\spT$, see Theorem~\ref{th:gamma}.
Therefore, we propose to use the error-free arithmetics and the compensated Horner scheme \cite[Algorithm \code{CompHorner}]{Graillat2008} for Algorithms \ref{alg:fourier_basis_A} and \ref{alg:fourier_grad}; see Section~\ref{subsec:hornerscheme}.
\end{remark}

\section{Algorithms of Variable Projection Gauss-Newton  (VPGN) and Modified Gauss-Newton (MGN) methods}
\label{sec:MGNand VPGN}
\subsection{Calculation of weighted projection to subspace with a given basis}
\label{sec:Pi_weighted}
    The MGN iteration step \eqref{eq:iterGNfinal} uses the projections $\Proj_{\bfZ, \bfW} \bfx$ for a vector $\bfx \in \spC^N$, where the matrix $\bfZ$ belongs to $\spC^{N \times r}$, while the VPGN iteration step \eqref{eq:gauss_simple} uses the projections $\Proj_{\bfZ, \bfW} \bfx$ for real $\bfx$ and $\bfZ$, which is real or complex depending on the implementation details.
     We assume that if the matrix $\Sigminus$ is $(2p+1)$-diagonal and positive definite, then it is presented in the form of
  the Cholesky decomposition $\Sigminus = \bfC^\rmT \bfC$; here $\bfC$ is an upper triangular matrix with $p$ nonzero superdiagonals \cite[p. 180]{GoVa13}.
  If $\Sigminus^{-1}$ is $(2p+1)$-diagonal and positive definite, then we consider the representation $\Sigminus = \widehat \bfC^{-1} (\widehat \bfC^{-1})^\rmT$, where $\Sigminus^{-1} = \widehat \bfC^\rmT \widehat \bfC$ is the Cholesky decomposition of $\Sigminus^{-1}$; here $\widehat\bfC$ is an upper triangular matrix with $p$ nonzero superdiagonals.

\begin{remark}\label{rem:wlsinfourier}
	As we mentioned in the beginning of Section~\ref{sec:optim}, the calculation of pseudoinverses ($\inverse{(\bfC \bfZ)}$ or $\inverse{((\widehat \bfC^{-1})^\rmT \bfZ)}$ in our case) can be reduced to solving a linear weighted least-squares problem and therefore their computing can be performed with the help of either the QR factorization or the SVD of the matrix $\bfC \bfZ$ or $(\widehat \bfC^{-1})^\rmT \bfZ$ respectively.
\end{remark}

\begin{algorithm}
	\caption{Calculation of $\winverse{\bfZ}{\bfW}$ and $\Proj_{\bfZ, \bfW} \bfx$ with the use of $\Sigminus = \bfC^\rmT \bfC$ or $\Sigminus^{-1} = \widehat \bfC^\rmT \widehat \bfC$}
	\label{alg:proj_calc}
    \Input{$\bfZ \in \spC^{N\times r}$, $\bfW \in \spR^{N\times N}$ and $\bfx \in \spC^N$.}
	\begin{algorithmic}[1]
		\If{$\Sigminus$ is $(2p+1)$-diagonal}
		\State{Compute the vector $\bfC \bfx$ and the matrix $\bfC \bfZ$.}
		\State{Calculate $\bfq = \inverse{(\bfC \bfZ)} (\bfC \bfx)$, see Remark \ref{rem:wlsinfourier}.}
		\EndIf
		\If{$\Sigminus^{-1}$ is $(2p+1)$-diagonal}
		\State{Compute the vector $(\widehat \bfC^{-1})^\rmT \bfx$ and the matrix $(\widehat \bfC^{-1})^\rmT \bfZ$.}
		\State{Calculate $\bfq = \inverse{((\widehat \bfC^{-1})^\rmT \bfZ)} ((\widehat \bfC^{-1})^\rmT \bfx)$, see Remark \ref{rem:wlsinfourier}.}
		\EndIf
		\State\Return{$\winverse{\bfZ}{\bfW} = \bfq \in \spR^{r\times N}$ and $\Proj_{\bfZ, \bfW} \bfx = \bfZ \bfq  \in \spR^{N}$.}
	\end{algorithmic}
\end{algorithm}

\begin{remark}\label{rem:wlsseminorm}
\label{rem:wlsseminorm_ts}
Algorithm \ref{alg:proj_calc} can be applied to the case of positive semidefinite weight matrices;
although, the general case of the Cholesky factorization of a degenerate matrix $\bfW$ is complicated, see \cite[p. 201]{higham2002accuracy}).
	However, there is a particular case of degenerate weight matrices, which corresponds to a time series with missing values. As we mentioned in Section~\ref{sec:intro}, in the presence of missing values, the weight matrix $\bfW$ has zero columns and rows corresponding to missing entries, which can be easily processed. Denote $\bfu  \in \spR^N$ the vector with units at the places of observations and zeros at the places of missing values, $\bfU = \diag(\bfu)$. Then the matrix $\bfW$ can be expressed as $\bfW = \bfU^\rmT \bfW_0 \bfU$. Suppose that $\bfW_0$ is positive definite. Consider the Cholesky decomposition $\bfW_0 = \bfC_0^\rmT \bfC_0$, where $\bfC_0$ is upper triangle, and set $\bfC = \bfC_0 \bfU$.
Then $\bfW = \bfC^\rmT \bfC$.
Note that if $\bfC_0$ is upper triangular with $p$ nonzero superdiagonals, then $\bfC$ is also upper triangular and has $p$ nonzero superdiagonals.
\end{remark}

\subsection{Calculation of $\Proj_{\calZ(\bfa), \bfW} \bfx$}

Calculating the projection $\Proj_{\calZ(\bfa), \bfW} \bfx$ can be performed either directly onto $\calZ(\bfa)$ with the use of its specific features (see Section~\ref{sec:ZofA}) or by means of constructing the projection $\Proj_{\calQ(\bfa), \bfW}$ onto the orthogonal compliment $\calQ(\bfa)$ (as suggested in \cite{Usevich2014}) and then subtracting from the identity matrix: $\bfI_N - \Proj_{\calQ(\bfa), \bfW}$.

Let us start with the algorithm proposed in \cite{Usevich2014}.
The calculation of $\Proj_{\calZ(\bfa), \Sigminus} \bfx$  in \cite{Usevich2014} is performed by means of the relation
\begin{equation} \label{eq:spZa_kostya}
	\Proj_{\calZ(\bfa), \bfW} \bfx = \left( \bfI_N - \bfW^{-1} \bfQ(\bfa) \bm\Gamma^{-1}(\bfa) \bfQ^\rmT(\bfa) \right) \bfx,
	\end{equation}
	where $\bm\Gamma(\bfa) = \bfQ^\rmT(\bfa) \Sigminus^{-1} \bfQ(\bfa)\in \spR^{(N-r)\times (N-r)}$ (see Lemma~\ref{th:varproj}).
The calculation of $\Proj_{\calZ(\bfa), \bfW}$ by \eqref{eq:spZa_kostya} needs computing the matrix  $\bm \Gamma^{-1}(\bfa)$.
Below we write down Algorithm~\ref{alg:solution_calc_vp}, which was used in the paper \cite{Usevich2014}, with a fast computation of $\bm \Gamma(\bfa)$ and its inverse (see Algorithm~\ref{alg:gamma_inverse}). Algorithm~\ref{alg:gamma_inverse} uses the matrix $\widehat \bfC$, which is defined as in the beginning of Section~\ref{sec:Pi_weighted}, i.e. $\Sigminus^{-1} = \widehat \bfC^\rmT \widehat \bfC$ is the Cholesky decomposition of $\Sigminus^{-1}$. Note that Algorithm~\ref{alg:solution_calc_vp} is applied to the case of a positive definite $\bfW$ only; the case of degenerate weight martices is not considered here since it requires a completely different algorithm, see \cite{Markovsky2013missing}.

\begin{algorithm}
	\caption{Calculation of $\bm\Gamma^{-1}(\bfa) \bfv$ by the method from \cite{Usevich2014}}
	\label{alg:gamma_inverse}
    \Input{$\bfa \in \spR^{r}$, $\bfW \in \spR^{N\times N}$, $\bfW^{-1}$ is $(2p+1)$-diagonal ($p\leq N$), $\bfv \in \spR^{N-r}$}
	\begin{algorithmic}[1]
		\State{Calculate the matrix $\widehat \bfC \bfQ(\bfa)$, which has $m+1$ non-zero diagonals, where $m=\min(p+r, N-r)$.}
		\State{Calculate $(2m+1)$-diagonal matrix $\bm\Gamma(\bfa) = (\widehat \bfC \bfQ(\bfa))^\rmT(\widehat \bfC \bfQ(\bfa))$.}
		\State{Calculate the Cholesky decomposition $\bm\Gamma(\bfa) = (\bm\Gamma_\mathrm{c})^\rmT \bm\Gamma_\mathrm{c}$, where $\bm\Gamma_\mathrm{c}$ is $(m+1)$-diagonal.}
		\State\Return $\bm\Gamma^{-1}(\bfa)\bfv = \bm\Gamma_\mathrm{c}^{-1}\left( (\bm\Gamma_\mathrm{c}^\rmT)^{-1}\bfv \right)$
	\end{algorithmic}
\end{algorithm}

Algorithm~\ref{alg:gamma_inverse} is used for calculating the projection $\Proj_{\calZ(\bfa), \bfW}$ in Algorithm~\ref{alg:solution_calc_vp} in the way similar to that in \cite{Usevich2014}.

\begin{algorithm}
	\caption{Calculation of $\Proj_{\calZ(\bfa), \bfW} \tsX$ by the method from \cite{Usevich2014} using \eqref{eq:spZa_kostya}}
	\label{alg:solution_calc_vp}
	\Input{$\bfa \in \spR^{r}$, $\bfW \in \spR^{N\times N}$.}
	\begin{algorithmic}[1]
		\State{Compute $\bfv = \bfQ^\rmT(\bfa) \tsX \in \spR^{N-r}$}
		\State{Compute $\bfy = \bm\Gamma^{-1}(\bfa) \bfv \in \spR^{N-r}$ using Algorithm~\ref{alg:gamma_inverse}}
		\State\Return{$\Proj_{\calZ(\bfa), \bfW} \tsX = \tsX - \bfW^{-1}\bfQ^\rmT(\bfa)\bfy$}
	\end{algorithmic}
\end{algorithm}

The theory described in Section~\ref{sec:ZofA} allows us to improve Algorithm~\ref{alg:solution_calc_vp}. The proposed method is described in Algorithm~\ref{alg:solution_calc_our}.

\begin{algorithm}
	\caption{Calculation of $\Proj_{\calZ(\bfa), \bfW} \tsX$ with the use of special properties of $\calZ(\bfa)$}
	\label{alg:solution_calc_our}
    \Input{$\bfa \in \spR^{r}$, $\bfW \in \spR^{N\times N}$.}
	\begin{algorithmic}[1]
		\State{Compute the matrix $\bfZ(\bfa)$ consisting of basis vectors of $\calZ(\bfa)$ by Algorithm \ref{alg:fourier_basis_A} (or Algorithm~\ref{alga:fourier_basis_A_comp} if an improved precision is necessary).}
		\State\Return{Calculate $\Proj_{\bfZ(\bfa), \bfW} \tsX$ by means of Algorithm~\ref{alg:proj_calc}.}
	\end{algorithmic}
\end{algorithm}

The use of direct projecting to $\calZ(\bfa)$ in Algorithm~\ref{alg:solution_calc_our} gives the advantage over Algorithm~\ref{alg:solution_calc_vp}, since Algorithm~\ref{alg:solution_calc_our} can be applied to the case of a rank-deficient matrix $\bfW$ (see Remark~\ref{rem:wlsseminorm_ts}).

\subsection{The VPGN algorithm}
\label{sec:VPGN_alg}
Algorithm~\ref{alg:gauss_newton_vp} implements the iterations \eqref{eq:gauss_simple}, which were obtained in \cite{Usevich2014} by the variable projection approach (see Section~\ref{sec:VPGN}). As will be discussed in Section~\ref{sec:comp_cost}, an effective implementation of this algorithm (the fast calculation of $\Proj_{\bfZ(\bfa), \bfW} \bfx$ within the algorithm) is available if $\Sigminus^{-1}$ is $(2p+1)$-diagonal.
Recall notation: $\fullop$ and $S_\tau^\star(\Ai0) = \Proj_{\calZ(
\bfa), \bfW} \tsX$, where $\bfa = \fullop(\Ai0)$, are introduced in Sections~\ref{sec:param} and \ref{sec:VPGN} respectively.

We present a new form for the Jacobian $\bfJ_{\tsS_\tau^\star}$ (see Lemma~\ref{th:varproj}), which is more suitable for implementation.
The columns of $\bfJ_{\tsS_\tau^\star}(\bfa)$ has the form
\begin{equation} 	\label{eq:vpformula}
	\left(\bfJ_{\tsS_\tau^\star}(\Ai0) \right)_{\col{i}} = -\bfW^{-1} \bfQ(\bfa) \bm\Gamma^{-1}(\bfa) \bfQ^\rmT(\bfe_j) \Proj_{\calZ(\bfa), \bfW} \tsX
	-\Proj_{\calZ(\bfa), \bfW} \bfW^{-1} \bfQ(\bfe_j) \bm\Gamma^{-1}(\bfa) \bfQ^\rmT(\bfa) \tsX,
\end{equation}
	where $\bfa = \fullop(\Ai0)$ and $j = (\calK({\i0}))_i$ is the $i$-th element of $\calK({\i0})$.

\begin{algorithm}
	\caption{Variable Projection Gauss-Newton method (VPGN)}
	\label{alg:gauss_newton_vp}
    \Input{$\tsX \in \spR^N$, $\bfa_0\in \spR^{r+1}$, a stopping criterion STOP.}
	\begin{algorithmic}[1]
		\State{Set $k = 0$,  $\bfb^{(0)} = \bfa_0$.}
		\Repeat{}
		\State{Choose $\i0$ such that $b_\i0^{(k)}\neq 0$; for example, find $\i0 = \argmax_{i} |b_i^{(k)}|$. Calculate $\bfa^{(k)} = c\bfb^{(k)}$, where $c$ is such that $c b_\tau^{(k)} = -1$, and take $\Ai0^{(k)} = \fullop^{-1}(\bfa^{(k)})$.}
		\State{Calculate $\tsS_k  = S_\tau^\star(\Ai0^{(k)})$  using Algorithm~\ref{alg:solution_calc_vp} or Algorithm~\ref{alg:solution_calc_our} to compute $\Proj_{\calZ(\bfa^{(k)}) , \bfW}$.}
		\State{Calculate $\bfJ_{S_\tau^\star}(\Ai0^{(k)})$ by \eqref{eq:vpformula} applying Algorithm~\ref{alg:gamma_inverse} to compute $\bm\Gamma^{-1}(\bfa^{(k)}) \bfv$ for $\bfv = \bfQ^\rmT(\bfe_j) \Proj_{\calZ(\bfa^{(k)}), \bfW} \tsX$ and for  $\bfv = \bfQ^\rmT(\bfa^{(k)}) \tsX$.}
		\State{Calculate
			$\Delta_k = \winverse{\bfJ_{S_\tau^\star}(\Ai0^{(k)})}{\bfW} (\tsX - \tsS_k)$} by applying Algorithm~\ref{alg:proj_calc} to computing the pseudoinverse.
		\State{Perform a step of size $\gamma_k$ for the line search in the descent direction given by $\Delta_k$ using Algorithm~\ref{alg:solution_calc_vp} or Algorithm~\ref{alg:solution_calc_our} to calculate $S_\tau^\star$. For example, find $0\le\gamma_k\le 1$ such that
          $\|\tsX - S_{\tau}^\star(\Ai0^{(k)} + \gamma \Delta_k) \|_{\bfW} \le \|\tsX - S_\tau^\star(\Ai0^{(k)}) \|_{\bfW}$
          by the backtracking method \cite[Section 3.1]{nocedal2006numerical}.}
		\State{Set $\Ai0^{(k+1)} = \Ai0^{(k)} + \gamma_k \Delta_k$, $\bfb^{(k+1)} = \fullop(\Ai0^{(k+1)})$.}
		\State{Set $k = k+1$.}
		\Until{STOP}
		\State\Return{$\widetilde \tsS = S_{\i0}^\star(\Ai0^{(k)})$ as an estimate of the signal.}
	\end{algorithmic}
\end{algorithm}

Algorithm~\ref{alg:gauss_newton_vp} can be implemented in two versions, with the projection $S_\tau^\star(\Ai0) = \Proj_{\calZ(\fullop(\Ai0)), \bfW} \tsX$ calculated by either Algorithm~\ref{alg:solution_calc_vp} or Algorithm~\ref{alg:solution_calc_our}.
The former version of the algorithm was proposed in \cite{Usevich2014}, whereas the latter version is more numerically stable.
Even with the use of Algorithm~\ref{alg:solution_calc_our}, Algorithm~\ref{alg:gauss_newton_vp} is hardly
extended to the case of a degenerate $\bfW$, since it still includes the call of  Algorithm~\ref{alg:gamma_inverse}.

\subsection{The MGN algorithm}

Algorithm~\ref{alg:gauss_newton_our} implements the iterations \eqref{eq:iterGNfinal} of the Modified Gauss-Newton algorithm, which is proposed in this paper. This algorithm uses  Algorithm~\ref{alg:solution_calc_our} for calculating $S_\tau^\star(\Ai0) = \Proj_{\calZ(\fullop(\Ai0)), \bfW} \tsX$ and differs from Algorithm~\ref{alg:gauss_newton_vp} mainly by steps 5 and 6.

\begin{algorithm}
	\caption{Modified Gauss-Newton method (MGN)}
	\label{alg:gauss_newton_our}
    \Input{$\tsX \in \spR^N$, $\bfa_0\in \spR^{r+1}$, a stopping criterion STOP.}
	\begin{algorithmic}[1]
		\State{Set $k = 0$, $\bfb^{(0)} = \bfa_0$.}
		\Repeat{}
		\State{Choose $\i0$ such that $b_\i0^{(k)}\neq 0$; for example, find $\i0 = \argmax_{i} |b_i^{(k)}|$. Calculate $\bfa^{(k)} = c\bfb^{(k)}$, where $c$ is such that $c b_\tau^{(k)} = -1$, and take $\Ai0^{(k)} = \fullop^{-1}(\bfa^{(k)})$.}
		\State{Calculate $\tsS_k  = S_\tau^\star(\Ai0^{(k)})$  using Algorithm~\ref{alg:solution_calc_our} to compute $\Proj_{\calZ(\bfa^{(k)}) , \bfW}$.}
\State{Calculate $\widehat \bfF_{\bfa^{(k)}}$ by Algorithm \ref{alg:fourier_grad} with $\bfa = \bfa^{(k)}$
and $\tsS=\tsS_k$.}
\State{Calculate
$\Delta_k = \winverse{\left(\bfI_N - \Proj_{\calZ(\bfa^{(k)}), \bfW}\right)\widehat \bfF_{\bfa^{(k)}}}{\bfW} (\tsX - \tsS_k)$  by applying Algorithm~\ref{alg:proj_calc} to computing the pseudoinverse.}
		\State{Perform a step of size $\gamma_k$ for the line search  in the descent direction given by $\Delta_k$ using Algorithm~\ref{alg:solution_calc_our} to calculate $S_\tau^\star$. For example, find $0\le\gamma_k\le 1$ such that
          $\|\tsX - S_{\tau}^\star(\Ai0^{(k)} + \gamma \Delta_k) \|_{\bfW} \le \|\tsX - S_\tau^\star(\Ai0^{(k)}) \|_{\bfW}$
          by the backtracking method \cite[Section 3.1]{nocedal2006numerical}.}
		\State{Set $\Ai0^{(k+1)} = \Ai0^{(k)} + \gamma_k \Delta_k$, $\bfb^{(k+1)} = \fullop(\Ai0^{(k+1)})$.}
\State{Set $k = k+1$.}
\Until{STOP}
		\State\Return{$\widetilde \tsS = S_{\i0}^\star(\Ai0^{(k)})$ as an estimate of the signal.}
	\end{algorithmic}
\end{algorithm}

\newpage
\section{Comparison of optimization algorithms}
\label{sec:comparison}
Let us compare Algorithm~\ref{alg:gauss_newton_vp} of the VPGN method and Algorithm~\ref{alg:gauss_newton_our} of the proposed MGN method from the computational viewpoint.

\subsection{Design of comparison} \label{sec:allalgorithms}
We consider four versions of the algorithms:
\begin{enumerate}
	\item the method VPGN (Algorithm~\ref{alg:gauss_newton_vp}), where the projections $\Proj_{\calZ(\bfa), \bfW}$ are calculated by Algorithm~\ref{alg:solution_calc_vp} as in \cite{Usevich2014};
	\item the method S-VPGN (Algorithm~\ref{alg:gauss_newton_vp}), where the projections $\Proj_{\calZ(\bfa), \bfW}$ are calculated by Algorithm~\ref{alg:solution_calc_our}; the Compensated Horner scheme is used;
	\item the proposed method MGN (Algorithm~\ref{alg:gauss_newton_our}), where the projections $\Proj_{\calZ(\bfa), \bfW}$ are calculated by Algorithm~\ref{alg:solution_calc_our};  the Compensated Horner scheme is not used.
	\item the proposed method S-MGN (Algorithm~\ref{alg:gauss_newton_our}), where the projections $\Proj_{\calZ(\bfa), \bfW}$ are calculated by Algorithm~\ref{alg:solution_calc_our}; the Compensated Horner scheme is used.
\end{enumerate}

Figure~\ref{fig:computational_scheme0} shows the schemes of calls of the algorithms used for the implementation of VPGN
and MGN, whereas Figure~\ref{fig:computational_scheme_stable} shows how these schemes are changed if we consider
the more stable versions S-VPGN and S-MGN.
The red color corresponds to VPGN, the blue color corresponds to MGN and the magenta color serves for the algorithms that are used by both methods.

These algorithms were implemented with the help of \code{R} and \code{C++}; the source code can be found in
\cite{Zvonarev2019}. 
In addition to the MGN method, the VPGN method, which is applied to the case of a common (not necessarily diagonal) weight matrix $\bfW$, was implemented in \cite{Zvonarev2019}. This implementation extends that from \cite{Markovsky.Usevich2014}, which is suitable for diagonal weight matrices only, and has the same order of computational cost.

We compare the MGN and VPGN algorithms theoretically and numerically, while S-MGN and S-VPGN are compared only numerically.

\begin{figure}[!hbt]
	\centering
{\small
    \begin{tikzpicture}
\tikz {
	\node (7) at (1.5,3) [rounded corners, fill = red, text=white] {Algorithm~7 (VPGN)
	};
	\node (4) at (4.5,4) [rounded corners, fill = red, text=white] {Algorithm~4};
	\node (5) at (4.5,3) [rounded corners, fill = red, text=white] {Algorithm~5};
    \node (8) at (1.5,1) [rounded corners, fill = blue, text=white] {Algorithm~8 (MGN)};
	\node (6) at (4.5,1) [rounded corners, fill = blue, text=white] {Algorithm~6};
	\node (1) at (7.5,1) [rounded corners, fill = blue, text=white] {Algorithm~1};
	\node (2) at (4.5,0) [rounded corners, fill = blue, text=white] {Algorithm~2};
	\node (3) at (7.5,2) [rounded corners, fill = violet, text=white] {Algorithm~3};
	\draw (7) edge[->] (4) (7) edge[->] (5)
	(5) edge[->] (4) (8) edge[->] (6) (6) edge[->] (1)
	(8) edge[->] (2) (2) edge[->] (1)
	(7) edge[->] (3) (8) edge[->] (3) (6) edge[->] (3);
}
\end{tikzpicture}
}
	\caption{Computational scheme for VPGN and MGN}

	\label{fig:computational_scheme0}
\end{figure}

\begin{figure}[!hbt]
	\centering
{\small
    \begin{tikzpicture}
\tikz {
	\node (7) at (1.5,3) [rounded corners, fill = red, text=white] {Algorithm~7 (S-VPGN)};
	\node (4) at (4.5,4) [rounded corners, fill = red, text=white] {Algorithm~4};
    \node (8) at (1.5,1) [rounded corners, fill = blue, text=white] {Algorithm~8 (S-MGN)};
	\node (6) at (4.5,1) [rounded corners, fill = violet, text=white] {Algorithm~6};
	\node (1) at (7.5,1) [rounded corners, fill = violet, text=white] {
		Algorithm~1
	};
	\node (2) at (4.5,0) [rounded corners, fill = blue, text=white] {Algorithm~2 (stable)};
	\node (3) at (7.5,3) [rounded corners, fill = violet, text=white] {Algorithm~3};
	\draw (7) edge[->] (4)
	(8) edge[->] (6) (6) edge[->] (1)
	(8) edge[->] (2) (2) edge[->] (1)
	(7) edge[->] (3) (8) edge[->] (3) (6) edge[->] (3)
	(7) edge[->] (6);
}
\end{tikzpicture}
}
	\caption{Computational schemes for S-VPGN and S-MGN}
	\label{fig:computational_scheme_stable}
\end{figure}

\subsection{Theoretical comparison}
We start the comparison from comparing the algorithms by the computational costs. Then, we will compare the stability of the algorithms in the conditions, when the algorithms are comparable by the computational costs.
This depends on the structure of the weight matrix $\bfW$.
The special case of interest is the case when the weight matrix  $\Sigminus$ is $(2p+1)$-diagonal with a small $p$ (this is the case of autoregressive noise and therefore a natural assumption). Note that a special case when both $\Sigminus$
and $\Sigminus^{-1}$ are banded corresponds to the case of a diagonal matrix $\Sigminus$.

\subsubsection{Computational cost}
\label{sec:comp_cost}
Let us estimate computational costs in flops and study the asymptotic costs as $N \rightarrow \infty$.
The proposed algorithms can be divided into several standard operations with known computational costs. We will use the following asymptotic orders: FFT of a sequence of length $N$ takes $O(N \log N)$ flops \cite[Chapter 1.4.1]{GoVa13}, FFT of the unit vector of length $N$ takes $\Theta(N)$ flops; the Cholesky decomposition of a $(2p+1)$-diagonal matrix $\bfA\in\spR^{N\times N}$ takes $\Theta(N (p+1)^2)$ flops \cite[Chapter 4.3.5]{GoVa13};  solving the  system of linear equations $\bfA\bfx = \bfb$ with $\bfb \in \spR^{N}$ using the obtained decomposition takes additionally $\Theta(N (p+1))$ flops, whereas for $\bfA\bfX = \bfB$ with $\bfB \in \spR^{N\times r}$ the additional cost is $\Theta(N r (p + 1))$ flops; the QR decomposition of an $N\times r$ matrix of rank $r$ takes $\Theta(N r^2)$ flops \cite[Chapter 5.2]{GoVa13}; the pseudo-inversion has the same cost as the QR decomposition, see Remark~\ref{rem:wlsinfourier}; the cost of matrix multiplication is directly determined by their size and structure, in particular, the multiplication of a $(2p+1)$-diagonal $N\times N$ matrix by a vector takes $\Theta(N (p + 1))$ flops \cite[Chapter 1.2.5]{GoVa13}, where $p=0$ corresponds to the case of a diagonal matrix; the computation of a polynomial of order $r$ at $N$ given points takes $\Theta(Nr)$ flops.

\paragraph{The MGN method}

Although the implementations of Algorithm~\ref{alg:gauss_newton_our} differ for the case when $\Sigminus^{-1}$ is $(2p+1)$-diagonal
and the case when $\Sigminus$ is $(2p+1)$-diagonal, the asymptotic computational cost is the same.

Algorithm~\ref{alg:fourier_basis_A} includes computing the $N \times N$ diagonal matrix $\bfA_g$, where each diagonal value is obtained using the calculation of a polynomial of order $r$ (step 2); solving a system of linear equations given by a diagonal matrix (step 3); FFT of $r$ unit vectors (step 3); FFT of $r$ arbitrary vectors (step 5); the QR decomposition (step 4); the multiplication of a diagonal matrix by a vector $r$ times (step 6). The search of optimal rotations at step 1 of Algorithm~\ref{alg:fourier_basis_A} serves for increasing of the algorithm stability. Therefore we can take a fix number of iterations in this search. Since the computational cost of calculating the objective function is  $O(N r)$ flops, the cost of step 1 is also $O(N r)$ flops. Therefore, Algorithm~\ref{alg:fourier_basis_A} requires $O(r N \log N + N r^2)$ flops, or $O(N \log N)$ for a fixed $r$. Algorithm~\ref{alga:fourier_basis_A_comp}, which is used in the stable version of MGN instead of Algorithm~\ref{alg:fourier_basis_A}, additionally uses matrix multiplications of an $N \times r$ matrix by a $r \times r$ one and solving a diagonal system at step 3, which leads to the same asymptotic as Algorithm~\ref{alg:fourier_basis_A}.

  Algorithm~\ref{alg:fourier_grad} includes the first two steps of Algorithm~\ref{alg:fourier_basis_A};  constructing matrices with the use of the multiplication by a diagonal matrix (steps 2-3); FFT and solving a system of linear equations given by a diagonal matrix (step 4); the multiplication of a diagonal matrix by a vector $2r$ times (step 5); all of them give $O(r N \log N)$ flops in sum.  Calculating the projection by Algorithm~\ref{alg:proj_calc} includes the multiplication by a $p$-diagonal matrix and the QR decomposition for the pseudoinverse compution that leads to $\Theta(N r^2 + N rp)$ operations. Therefore, the asymptotic cost of Algorithm~\ref{alg:solution_calc_our} and finally of one iteration of Algorithm~\ref{alg:gauss_newton_our} is $O(N r^2 + Np^2 + r N \log N)$, or $O(Np^2 + N \log N)$ for a fixed rank $r$. This order includes $\Theta(N (p + 1)^2)$ flops needed for computing the Cholesky decompositions of either matrix $\Sigminus^{-1}$ or $\Sigminus$.

\paragraph{The VPGN method}

Let $\Sigminus^{-1}$ be $(2p+1)$-diagonal.
Algorithm~\ref{alg:gamma_inverse} includes computing the Cholesky factorization of a ($2m+1$)-diagonal matrix of order $(N-r) \times (N-r)$, where $m \le p + r$ (step 3); solving a system of linear equations using the obtained decomposition (step 4); the multiplications of matrices with $p+1$ and $r+1$ non-zero diagonals (step 1), $(m+1)$ and $(m+1)$ non-zero diagonals (step 2). This gives us the asymptotic cost $O(N r^2 + N p^2)$ flops. Algorithm~\ref{alg:solution_calc_vp} includes the multiplications by a $(r+1)$-diagonal matrix (step 1, step 3), a $(2m+1)$-diagonal matrix (step 2), and a $(2p+1)$-diagonal matrix (step 3) with the call of Algorithm~\ref{alg:gamma_inverse} at step 2. Therefore, the asymptotic cost is the same as for Algorithm~\ref{alg:gamma_inverse}.
Thus, the asymptotic cost of one iteration of Algorithm~\ref{alg:gauss_newton_vp} is also $O(N r^2 + Np^2)$, or $O(N + N p^2)$ for a fixed $r$ (we assume that the Cholesky decomposition of the matrix $\bm\Gamma(\bfa^{(k)})$ in Algorithm~\ref{alg:gamma_inverse} is performed once for one iteration).

For the case when $\Sigminus$ is $(2p+1)$-diagonal, $p > 0$, there is no implementation of Algorithm~\ref{alg:gamma_inverse} faster than with cubic (in $N$) asymptotic complexity, since $\bm \Gamma(\bfa)$ (see Section~\ref{sec:MUdetails}) is not a banded matrix.
Therefore, the complexity of Algorithm~\ref{alg:gauss_newton_vp} is $O(N^3)$.

\begin{remark}
\label{rem:compcost}
Thus, if the inverse $\Sigminus^{-1}$ of the weight matrix $\Sigminus$ is $(2p+1)$-diagonal, then the computational cost of the proposed MGN method is slightly larger in comparison with the VPGN method.
However, if the weight matrix  $\Sigminus$ is $(2p+1)$-diagonal and $p>0$ (this is the case of autoregressive noise and therefore a natural assumption),
then the computational cost of the MGN method is significantly smaller by order. In the case of a diagonal matrix $\bfW$, the costs of MGN and VPGN are $O(N \log N)$ and $O(N)$ respectively.
\end{remark}

\subsubsection{Stability}
\label{sec:comp_thstab}
Let us focus on the main ``stability bottlenecks'' of both methods, which consist of solving the systems of linear equations with matrices related to $\bfQ(\bfa)$. In fact, we say about inverting the matrices depending on $\bfa$, that is, on the coefficients of GLRR($\bfa$) governing the signal. For the MGN algorithm, it is the matrix $\bfA_g$ inverted in Algorithm~\ref{alg:fourier_basis_A}; for the VPGN algorithm, it is the matrix $\bm\Gamma(\bfa)$ whose inversion is constructed in Algorithm~\ref{alg:gamma_inverse}. Let us compare the  orders of the condition numbers of these matrices as the time-series length $N$ tends to infinity.

\paragraph{The MGN method}

Recall that the inversion of the matrix $\bfA_g$ in Algorithm~\ref{alg:fourier_basis_A} and Algorithm \ref{alg:fourier_grad} (the first step) serves for solving the linear systems \eqref{eq:lineqMGN} and \eqref{eq:lineqMGN2} in a stable and fast way (see Remark~\ref{rem:A_g_C_a}). Theorem \ref{th:gamma} shows that the order of the condition number of the matrix $\bfA_g$ is $\Theta(N^{t})$, where $t$ is the maximal multiplicity of roots of the characteristic polynomial  $g(\bfa)$ \eqref{eq:pol_z} on the unit circle. It is worth to mention that the use of Algorithm \ref{alga:fourier_basis_A_comp} increases the accuracy of computing the diagonal elements of $\bfA_g$ and does not change its condition number.

\paragraph{The VPGN method}

In the VPGN algorithm, the inverted matrix $\bm\Gamma(\bfa)$ is used in Algorithm~\ref{alg:solution_calc_vp} (for calculating the expression \eqref{eq:spZa_kostya}) and in Algorithm~\ref{alg:gauss_newton_vp} (for computing the expression \eqref{eq:vpformula}). For fast inversion, the diagonals of the matrix $\bm\Gamma(\bfa)$ are computed explicitly; then the Cholesky factorization is used.
It is shown in \cite[Section 6.2]{Usevich2014} that the condition number of $\bm\Gamma(\bfa)$ is $O(N^{2t})$.
Thus, this implementation of the inversion of $\bm\Gamma(\bfa)$ in VPGN is less stable than the inversion of $\bfA_g$ in MGN, since the condition number of $\bfA_g$ is $\Theta(N^{t})$.

Certainly, the inversion of $\bm\Gamma(\bfa)$ can be performed with better stability.
For example, one can use the QR factorization of the matrix $\bfW^{-1/2} \bfQ(\bfa)$ instead of the inversion of $\bm\Gamma(\bfa)$. However, the QR factorization does not exploit the banded structure of matrix $\bm\Gamma(\bfa)$, therefore, it is significantly slower than the Cholesky factorization if $\bfW^{-1}$ is banded.
We do not compare the MGN and VPGN algorithms by stability if $\bfW^{-1}$ is not banded, since then the computational cost of the VPGN algorithm is very large. Thus, in fact, the practical case considered in this comparison is the case of a diagonal weight matrix $\bfW$.


\subsection{Numerical comparison} \label{subsec:speed}
We present the numerical comparison starting from comparing the algorithm's stability. First, we construct a special example for demonstrating stability/accuracy. Then, the same example will be used for comparing the computational costs.

\subsubsection{Stability} \label{subsec:basisacc}
With the help of Lemma~\ref{lemma:locminnec}, we construct an example, where a local solution of \eqref{eq:wls} is known.
For constructing a solution of rank $r=3$, we use the well-known theory about the relation of linear recurrence relations,
characteristic polynomials, their roots and the explicit form of the series, see e.g. the book \cite[Sections 3.2.1, 3.2.2]{Golyandina2013} with a brief description of this relation in the context of time series structure.

Let $\tsY_N^\star = (b x_1^2, \ldots, b x_N^2)^\rmT$, where $x_i$, $i=1,\ldots,N$, form the equidistant grid in $[-1; 1]$ and the constant $b$ is such that $\|\tsY_N^\star\|=1$. The series $\tsY_N^\star$ satisfies the GLRR($\bfa^*$) for $\bfa^* = (1, -3, 3, -1)^\rmT$. Since the last component of $\bfa^*$ is equal to $-1$, we can say that the series satisfies the LRR($\bfa^*$).
Denote $\widehat \tsN_N = (c |x_1|, \ldots, c |x_N|)^\rmT$, where the constant $c$ is such that $\|\widehat \tsN_N\| = 1$.
Construct the observed series as $\tsX_N = \tsY_N^\star + \tsN_N$, where $\tsN_N = \widehat \tsN_N - \Proj_{\calZ((\bfa^*)^2), \Sigminus} \widehat \tsN_N$.
Thus, the pair $\tsX_0 = \tsY_N^\star$ and $\tsX = \tsX_N$ satisfies the conditions of Lemma \ref{lemma:locminnec}, which provides the necessary conditions for local minima. The sufficient condition (the positive definiteness of the Hessian matrix of the objective function  $\|\tsX -  \tsS(\Si0, \Ai0)\|^2_\bfW$ \cite[Theorem 2.3]{nocedal2006numerical}) was tested numerically for $N < 100$.
Details of the example implementation see in Section~\ref{sec:alg_details}.

The comparison is performed for the methods VPGN, S-VPGN, MGN and S-MGN for different $N$ from $20$ to $50000$; the compensated Horner scheme is used within the algorithms.
For simplicity, consider the non-weighted case, when $\Sigminus$ is the identity matrix.

\begin{figure}[!hbt]
	\centering
	(a)\includegraphics[scale=1.00]{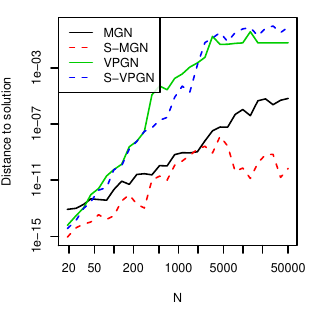}
	(b)\includegraphics[scale=1.00]{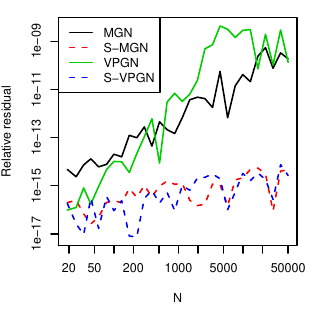}
	\caption{Comparison of algorithms by distance to the solution (a) and by relative residuals (b), for different $N$.}
	\label{fig:kostya_comp_disp}
\end{figure}

\begin{figure}[!hbt]
	\centering
	\includegraphics[scale=1.00]{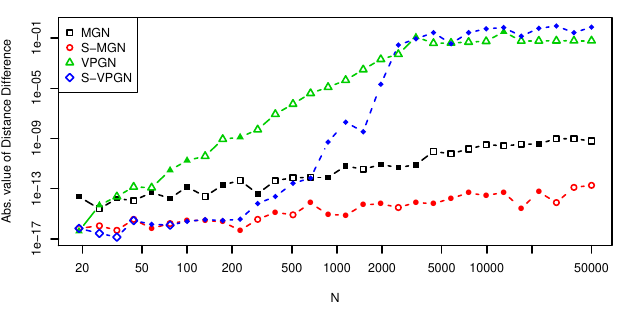}
	\caption{Comparison of algorithms by absolute differences between the values of the objective function at the final point of the algorithm and at the point of local minimum, for different $N$. The filled plotting symbols correspond to positive values of the differences.
}
	\label{fig:dist_diff}
\end{figure}

 Denote $\widetilde \tsY^\star$ the result of an algorithm participating in the comparison.
The main comparison was done by accuracy, that is, by the Euclidean distance between $\widetilde \tsY^\star$ and $\tsY^\star_N$ (Fig.~\ref{fig:kostya_comp_disp}(a)).
Also, we checked if the obtained solution $\widetilde \tsY^\star$ satisfies the GLRR($\bfa^\star$) used at the last iteration of the algorithm (Fig.~\ref{fig:kostya_comp_disp}(b)).
The measure of agreement with the GLRR($\bfa^\star$) is the relative residual ${\|\bfQ^\rmT(\bfa^\star)\widetilde \tsY^\star\|}\big/{\|\bfa^\star\|}$.
In addition, the algorithms were compared by discrepancy between the values of the objective function at the final
point of the algorithm and at the point of local minimum, i.e. by $\|\tsX_N -  \widetilde \tsY^\star\| - \| \tsX_N - \tsY^\star_N\|$ (Fig.~\ref{fig:dist_diff}).

The compared algorithms contain a line search in the descent direction $\Delta_k$.
 The line search method and the stopping criteria are not specified in the algorithms.
 The details of the used method are described in Section~\ref{sec:alg_details}. It is important that the method together with the stopping criterion is numerically stable
 with respect to the accuracy of computation.

The algorithms were started from the GLRR($\bfa_0$), where  $\bfa_0 =\bfa^* + 10^{-6} (1, 1, $ $ 1, 1)^\rmT$. Figure~\ref{fig:kostya_comp_disp}(a) shows that the accuracy of MGN and S-MGN is better than the accuracy of VPGN and S-VPGN. On the other hand, the resultant time series produced by the methods S-MGN and S-VPGN are close to $\overline \calD_r$ for all considered times-series lengths $N$ (see small relative residuals in Fig.~\ref{fig:kostya_comp_disp}(b)), whereas the methods MGN and VPGN yield time series
which are far from $\overline \calD_r$ for large $N$. Note that in exact arithmetic, VPGN and S-VPGN would produce the same results; the same is true for the pair of MGN and S-MGN.

Let us demonstrate the difference between VPGN and S-VPGN. Fig.~\ref{fig:dist_diff} shows that for most of $N$ the numerical solution provided by the VPGN method is closer to $\tsX_N$ than the theoretic solution is (the depicted differences are negative). This is an over-fitting, since the numerical solution $\widetilde \tsY^*$ is far from the series of rank $r$  for large $N$ (Fig.~\ref{fig:kostya_comp_disp}(b)). For S-VPGN, the difference is positive;
however, both VPGN and S-VPGN are further from the theoretical solution $\tsY^\star_N$ than MGN and S-MGN are.
It seems that negative values for S-MGN are explained not by an over-fitting but by the machine accuracy of numerical
calculations.

\subsubsection{Computational cost}
For effectively implemented algorithms, the computational speed should have the same order as the theoretical computational cost in FLOPs. Let us numerically confirm Remark~\ref{rem:compcost}.
We will consider the computational speed for different implementations of step 6 of Algorithms \ref{alg:gauss_newton_vp} and \ref{alg:gauss_newton_our}, where $\Delta_{k}$ is calculated. This speed characterizes the computational speed
of one iteration.
We consider different time series lengths $N$ and two types of the weight matrix $\bfW$, the identity matrix and a 3-diagonal matrix, which is the inverse of the autocovariance matrix of an autoregressive process of order 1. The speed is estimated with the help of the example described in Section~\ref{subsec:basisacc}.

The results for the CPU time are depicted in Fig.~\ref{fig:comp_time}. Since we compare asymptotic behavior (as $N\to\infty$), we eliminate the constant time, which does not depend on $N$, in the following way.
 For each algorithm, we consider the CPU times for different values of $N$ starting from 100 and then divide them by the CPU time for $N$ equal 100.
  Note that if $\bfW$ is diagonal, the computational times of the algorithms are asymptotically almost the same. However, if
  $\bfW$ contains three diagonals, the computational times for the methods MGN and S-MGN are much smaller than that for the methods VPGN and S-VPGN.

\begin{figure}[!hbt]
	\centering
	(a)\includegraphics[scale=1.00]{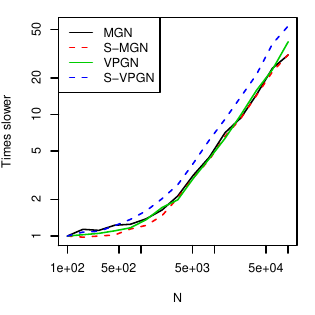}
	(b)\includegraphics[scale=1.00]{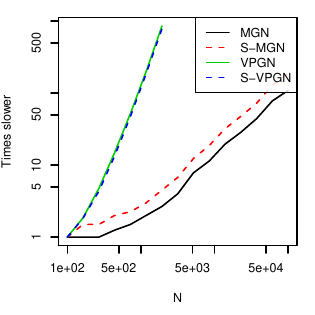}
	\caption{Comparison of algorithms by CPU times of one iteration for different $N$; (a) diagonal $\bfW$ and (b) 3-diagonal $\bfW$.}
	\label{fig:comp_time}
\end{figure}

\subsection{Signal estimation using MGN: with and without gaps} \label{subsec:model}
Consider a time series $\tsY_{50}$ similar to the one considered in \cite{Ishteva.etal2014}, which is the sum of a signal of rank $r=4$ and Gaussian white noise. That is, let the signal $\tsS_{50}$ have the following form: $\tsS_{50} = (s_1, \ldots, s_{50})^\rmT$, where
\begin{equation*}
s_i = 0.9^i \cos\left(\frac{\pi}{5} i\right) + \frac{1}{5} 1.05^i \cos \left(\frac{\pi}{12} i + \frac{\pi}{4}\right), \quad i = 1, \ldots, 50,
\end{equation*}
and
\begin{equation*}
\tsY_{50} = \tsS_{50} + 0.2 \frac{\tsN_{50}}{\|\tsN_{50}\|}\|\tsS_{50}\|;
\end{equation*}
here the series $\tsN_{50}$ consists of i.i.d. normal random variables with zero mean and unit standard deviation. Note that since $\tsY_{50}$ contains a random component, we were not able to reproduce the time series studied in \cite{Ishteva.etal2014} exactly.

Let us consider two versions of the time series $\tsY_{50}$, the first one is without missing data and the second time series with artificial gaps at positions $10 \ldots 19$ and $35 \ldots 39$, and construct two estimates of
the signal by the MGN method (Algorithm~\ref{alg:gauss_newton_our}).

In Algorithm~\ref{alg:gauss_newton_our}, the weight matrix $\bfW$ should be set. Since the noise is white,
the identity matrix $\bfW = \bfI_{50}$ was taken for the case without gaps; for the case with gaps, we changed ones on the diagonal of $\bfW$ at the positions of missing data to zeros.
For constructing the initial GLRR, we impute the mean value of the time series to replace the missing entries and then take
the GLRR coefficients from the last ($(r+1)$-th) right singular vector  of the SVD of the $(r+1)$-trajectory matrix
$\calT_{r+1}(\tsY_{50})$.

The results are presented in Figure \ref{fig:model}. The series $\tsY_{50}$ is indicated by the black dots, the signal $\tsS_{50}$ is depicted by the blue line, and the obtained approximation $\widetilde \tsS$ is shown by the red solid line. Note that in both cases $\widetilde \tsS$ gives a fairly close estimate of $\tsS_{50}$, despite even a big gap at $10 \ldots 19$ in the second case with missing values.

\begin{figure}[!hbt]
	\centering
	(a)\includegraphics[scale=1.00]{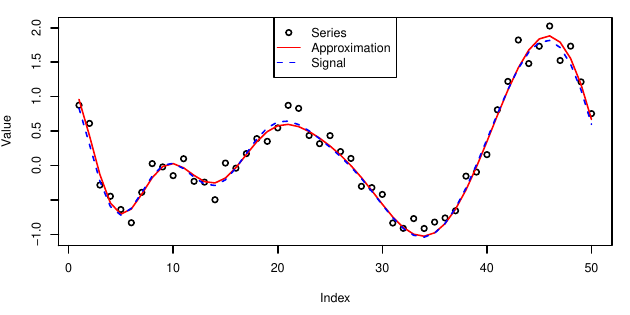}
	(b)\includegraphics[scale=1.00]{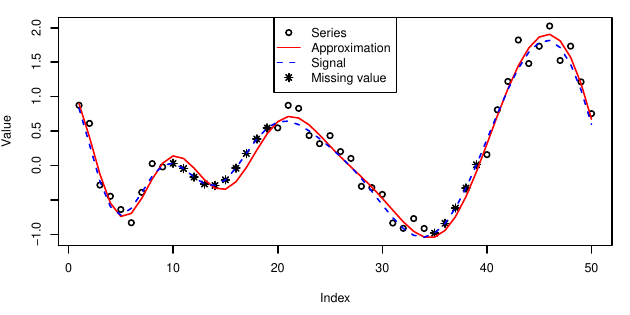}
	\caption{Estimates of the signal $\tsS_{50}$ using the MGN algorithm, (a) without gaps (b) with gaps.}
	\label{fig:model}
\end{figure}

\section{Conclusion}
\label{sec:conclusion}
In this paper we presented a new iterative algorithm (MGN, Algorithm~\ref{alg:gauss_newton_our}) for computing the numerical solution to the
problem~\eqref{eq:wls} and compared it with a state-of-art algorithm based on the variable projection approach (VPGN, Algorithm~\ref{alg:gauss_newton_vp}).
We showed that the proposed algorithm MGN allows the implementation, which is more numerically stable for the case of
multiple roots of the characteristic polynomial (in particular, for polynomial series, where the multiplicity
is equal to the polynomial degree plus one). This effect can be explained by the inversion of matrices with
condition number $O(N^{t})$ in MGN (Theorem~\ref{th:gamma}), where $t$ is the multiplicity, while the direct implementation
of VPGN deals with matrices with condition number $O(N^{2t})$ \cite[Section 6.2]{Usevich2014}.
The comparison of computational costs in Section~\ref{sec:comp_cost} shows that the algorithm MGN has slightly larger costs for the case of banded inverses $\bfW^{-1}$ of weight matrices. However, in the case of autoregressive noise with covariance matrix $\bm\Sigma$,
the corresponding weight matrix $\bfW = \bm\Sigma^{-1}$ is banded itself and $\bfW^{-1}$ is not banded.
Then the proposed algorithm MGN has a much lower computational cost in comparison with VPGN.
An important feature of the MGN algorithm is that it can be naturally extended to the case of missing data
without increasing the computational cost (see Remark~\ref{rem:wlsseminorm} and
the example in Section~\ref{subsec:model}.

To construct and justify the new algorithm, the properties of the space of low-rank time series were studied.
These properties can be useful not only in the framework of the algorithm justification. In particular,
we proved (Theorem~\ref{th:tangent}) that the tangent subspace at the point $\tsS$, which is governed by a GLRR($\bfa$), can be described in terms of the GLRR($\bfa^2$).
This fact allows one to construct first-order linear approximations to functions at points from $\calD_r$.
Then, in Section~\ref{sec:ZofA} we present a numerically stable algorithm of projecting a series to the set $\calZ(\bfa)$ of time series, which are governed by the GLRR($\bfa$). This can be useful for numerical solutions of different approximation problems related to the SLRA problems.


\section*{Acknowledgments} The reported study was funded by RFBR, project number 20-01-00067.

\clearpage
\appendix
\label{sec:app}

\section{Theoretical details}
\subsection{Correspondence between notations}
\label{sec:notation}
For the convenience of comparisons, in Table~\ref{table:defines} we present the correspondence between the notation used in this paper and the notation from  \cite{Usevich2012, Usevich2014}.

\begin{table}[!hh] 	\centering
	\caption{Correspondence between notations}
    \label{table:defines}
	\begin{tabular}{|c|c|c|c|c|c|c|c|c|}
		\hline
		This paper & $\tsX$ & $N$ & $r+1$ & $1$ & $N-r$ & $\bfa$ & $\Sigminus$ & $\bm\Gamma(\bfa)$ \\ \hline
		Usevich \& Markovsky & $p_D$ & $n_p$ & $m$ & $d$ & $n$ & $R$ & $W$ & $\Gamma$\\
		\hline
	\end{tabular}
\end{table}

\subsection{Rank of \eqref{eq:model}}
\label{sec:rank_calc}
\begin{proposition}
Let a series $\tsS$ of length $N$ have the form \eqref{eq:model}, $0\le \omega_k \le 0.5$, $0\le \phi_k < 2\pi$ and $m_k\ge 0$ be the polynomial degree, $k=1,\ldots,d$.
Suppose that the pairs $(\alpha_k, \omega_k)$ are different.
Also, assume that if $\omega_k = 0$ or $\omega_k = 0.5$, then $\phi_k \neq 0$.
Let $r_k$ be equal to 2 if $0 < \omega_k < 0.5$ and be equal to 1 otherwise.
Then the rank of $\tsS$ is equal to $\sum_{k=1}^d (m_k+1)r_k$ for sufficiently large $N$.
\end{proposition}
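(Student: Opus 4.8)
The plan is to pass to the complex polynomial-exponential representation of $\tsS$ and then read off the rank from the classical correspondence between such sequences and minimal linear recurrence relations. First I would use $\sin(2\pi\omega_k n+\phi_k)=\frac{1}{2\unit}\bigl(e^{\unit(2\pi\omega_k n+\phi_k)}-e^{-\unit(2\pi\omega_k n+\phi_k)}\bigr)$ to rewrite the $k$-th summand of \eqref{eq:model} as
\[
P_{m_k}(n)e^{\alpha_k n}\sin(2\pi\omega_k n+\phi_k)=\frac{e^{\unit\phi_k}}{2\unit}P_{m_k}(n)(\mu_k^+)^n-\frac{e^{-\unit\phi_k}}{2\unit}P_{m_k}(n)(\mu_k^-)^n,
\]
where $\mu_k^\pm=\exp(\alpha_k\pm\unit 2\pi\omega_k)$. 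Hence $\tsS$ is a linear combination of terms $n^\ell\mu^n$ with $\mu\in\spC$ and $0\le\ell\le m_k$.

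Next I would determine, for each $k$, the distinct roots that actually occur and the exact degree of the polynomial multiplying each. For $0<\omega_k<0.5$ the roots $\mu_k^+$ and $\mu_k^-$ are distinct, since their arguments $\pm 2\pi\omega_k$ differ modulo $2\pi$; their leading coefficients are nonzero (being $\pm e^{\pm\unit\phi_k}/(2\unit)$ times the nonzero leading coefficient of $P_{m_k}$), so each contributes a root of multiplicity $m_k+1$ and $r_k=2$. For $\omega_k\in\{0,0.5\}$ the summand reduces to $(\pm1)^n\sin(\phi_k)\,P_{m_k}(n)e^{\alpha_k n}$, and the endpoint hypothesis guarantees $\sin(\phi_k)\neq 0$, so a single root $\pm e^{\alpha_k}$ of multiplicity $m_k+1$ appears and $r_k=1$.

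Then I would verify that roots arising from different indices never coincide. Every root attached to index $k$ has modulus $e^{\alpha_k}$ and argument in $\{\pm 2\pi\omega_k\}\subset[-\pi,\pi]$, so two such roots agree only if $\alpha_k=\alpha_j$ and $\omega_k=\omega_j$, which is excluded because the pairs $(\alpha_k,\omega_k)$ are assumed distinct. Consequently the complete list of roots, taken with the multiplicities above, has exactly $\sum_{k=1}^d(m_k+1)r_k$ entries, the roots are pairwise distinct, and each accompanying polynomial attains its nominal degree.

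Finally I would invoke the classical structure theory (\cite[Theorem 3.1.1]{Hall1998}; cf.\ \cite[Corollary 5.1]{Heinig1984}): a sequence $\sum_j Q_j(n)\mu_j^n$ with pairwise distinct $\mu_j$ and $\deg Q_j=d_j-1$ is governed by the minimal LRR with characteristic polynomial $\prod_j(z-\mu_j)^{d_j}$, whose degree equals $\sum_j d_j$. By the definition of signal rank through $\calT_{r+1}$ together with the stabilization of $\rank\calT_L(\tsS)$ once $\min(L,N-L+1)$ exceeds that degree, the rank of $\tsS$ equals $\sum_{k=1}^d(m_k+1)r_k$ for sufficiently large $N$. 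The main obstacle is precisely the middle step: ruling out cancellation of the leading polynomial coefficients and coincidence of the roots, which is where all three hypotheses---distinctness of the pairs $(\alpha_k,\omega_k)$, the normalization $0\le\omega_k\le 0.5$, and $\phi_k\neq 0$ at the endpoints---are consumed; once distinctness and the exact multiplicities are established, the count and the rank formula follow from the cited theorems.
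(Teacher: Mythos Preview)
Your proposal is correct and follows essentially the same route as the paper: complexify each summand via Euler's formula, identify the resulting roots $\mu_k^\pm=\exp(\alpha_k\pm\unit 2\pi\omega_k)$ with their multiplicities $m_k+1$, and read off the rank from the classical polynomial-exponential/LRR correspondence; the paper's proof is merely terser, exhibiting the basis $(j^i\mu_k^j)_{j=1}^{r+1}$ of the trajectory space directly rather than citing \cite{Hall1998,Heinig1984}. Your case analysis (distinctness of the $\mu_k^\pm$ across $k$, collapse at $\omega_k\in\{0,0.5\}$) is in fact more careful than the paper's own argument. One small caveat: the hypothesis $\phi_k\neq 0$ with $0\le\phi_k<2\pi$ does not by itself exclude $\phi_k=\pi$, so your assertion that it ``guarantees $\sin(\phi_k)\neq 0$'' slightly overstates what is written---but this is an imprecision in the proposition statement rather than a defect in your method, and the paper's proof does not address it either.
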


\begin{proof}
The assertion about the ranks of real-valued time series is the consequence of the analogous results for complex-valued time series.
If a series $\tsC=(c_1,\ldots,c_N)$ has terms $c_n = \sum_{k=1}^s P_{m_k}(n) \mu_k^n$ with different complex $\mu_k$, then its rank $r$ is
equal to  $\sum_{k=1}^s (m_k+1)$. This directly follows from the explicit form of the basis of the column space of the trajectory matrix $\calT_{r+1}(\tsC)$, which consists of  $r$ linearly independent vectors
$(1^i \mu_k^1, 2^i \mu_k^2, \ldots (r+1)^i \mu_k^{r+1})^\rmT$, $k=1,\ldots,s$, $i=0,\ldots,m_k$.
The rank of a real-valued time series is induced by the presentation of $\exp(\alpha_k n) \sin(2\pi \omega_k n + \phi_k)$, $0<\omega_k < 0.5$, as a linear combination
of $\mu^n$ and $\overline{\mu}^n$, where $\mu = \exp(\alpha_k + \unit 2\pi \omega_k)$ and $\overline{\mu}$ is the complex conjugate to $\mu$.
\end{proof}

\subsection{Proof of lemma about $\overline{\calD_r}$}
\label{sec:closure}
\begin{lemma}
	$\tsS \in \overline{\calD_r}$ if and only if $\tsS$ is governed by a GLRR($\bfa$) defined by a vector $\bfa \in \spR^{d+1}$, $d \le r$.
\end{lemma}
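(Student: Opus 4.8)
The plan is to show that the algebraic condition ``$\tsS$ admits a GLRR of order at most $r$'' coincides with membership in $\overline{\calD_r}$, by first rephrasing the GLRR condition as a rank condition on a single trajectory matrix, and then treating the two inclusions separately.

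First I would reformulate the right-hand side. A nonzero $\bfa \in \spR^{d+1}$ with $\bfa^\rmT \calT_{d+1}(\tsS) = \bfzero_{N-d}^\rmT$ exists precisely when the rows of $\calT_{d+1}(\tsS)$ are linearly dependent, i.e. $\rank \calT_{d+1}(\tsS) \le d$. Moreover, appending a zero to $\bfa$ turns a GLRR of order $d$ into a GLRR of order $d+1$, so a GLRR of some order $d \le r$ exists if and only if a GLRR of order exactly $r$ exists, that is, if and only if $\rank \calT_{r+1}(\tsS) \le r$. Thus the statement reduces to $\tsS \in \overline{\calD_r} \iff \tsS \in \calV_r$, where $\calV_r = \{\tsS \in \spR^N : \rank \calT_{r+1}(\tsS) \le r\}$.

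For the inclusion $\overline{\calD_r} \subseteq \calV_r$, I would note that $\calV_r$ is cut out by the vanishing of all $(r+1)\times(r+1)$ minors of $\calT_{r+1}(\tsS)$, which are polynomials in the entries $s_n$; hence $\calV_r$ is closed. Since every series of rank exactly $r$ satisfies $\rank \calT_{r+1}(\tsS) = r \le r$, we have $\calD_r \subseteq \calV_r$, and closedness gives $\overline{\calD_r} \subseteq \calV_r$. This direction is routine.

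The reverse inclusion $\calV_r \subseteq \overline{\calD_r}$ is the hard part. Given $\tsS$ with $\rank \calT_{r+1}(\tsS) = \rho \le r$, I would exhibit a family $\tsS_\varepsilon \to \tsS$ of series of rank exactly $r$. Using the structure theory of finite-rank series (the correspondence between the minimal GLRR, its characteristic polynomial, and the parametric form, cf. the proposition in Section~\ref{sec:rank_calc} and \cite[Corollary 5.1, Definition 5.4]{Heinig1984}), the series $\tsS$ is a sum of polynomial-exponential-sinusoidal terms of total rank $\rho$; adding $r-\rho$ further terms with fresh, mutually distinct exponents and frequencies and coefficient $\varepsilon$ yields a real series of rank exactly $r$ that converges to $\tsS$ as $\varepsilon \to 0$. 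Equivalently and more algebraically, one may invoke that rank-exactly-$r$ Hankel matrices are dense in the (irreducible) determinantal variety of Hankel matrices of rank at most $r$, whose complex case is \cite[Remark 1.46]{iarrobino1999power}; the present real-valued version is what must be verified. The hard part will be precisely this density/perturbation step: guaranteeing that the perturbation raises the rank to exactly $r$ (not higher, by adding only $r-\rho$ new modes, and not lower, by keeping them distinct from the existing ones) while remaining real-valued and arbitrarily close to $\tsS$. Once this is settled, combining both inclusions with the reformulation of the first paragraph completes the proof.
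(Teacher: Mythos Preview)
Your proof is essentially on the same track as the paper's: both reformulate the GLRR condition as $\rank \calT_{r+1}(\tsS) \le r$, handle the easy inclusion by closedness, and the hard inclusion by perturbing a rank-$\rho$ series up to rank $r$ with new exponential modes. Your closedness argument (via vanishing minors) is actually a bit more direct than the paper's, which routes through $\overline{\calM_{=r}} = \calM_{\le r}$ from \cite{Lewis2008}.

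For the density direction, however, your first proposed route through the parametric form has a genuine gap: not every real series of rank $\rho$ is a sum of polynomial-exponential-sinusoidal terms. The paper itself supplies the counterexample $\tsS = (1,1,1,1,1,2)^\rmT$, which has rank $2$ but satisfies no LRR, so the proposition in Section~\ref{sec:rank_calc} (which presupposes the explicit form \eqref{eq:model}) does not apply to $\tsS$ and cannot be invoked to compute the rank of $\tsS$ plus new modes. The paper avoids this by never writing $\tsS$ in parametric form: it adds a \emph{single} real exponential $\alpha\tsD$ not annihilated by the GLRR of $\tsS$, explicitly writes down a GLRR of order $r^\star+1$ for the sum to get the upper rank bound, and then uses the Marsaglia--Styan rank equality \cite[Corollary 8.1]{Marsaglia1974} (empty intersection of row and column spaces) to certify the rank jumps by exactly one; iterating gives rank $r$. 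Your second alternative, invoking density of rank-exactly-$r$ Hankel matrices in the Hankel determinantal variety, is in spirit what the paper establishes, but the one-mode-at-a-time rank argument is precisely the concrete device that fills the gap you flagged as ``the hard part''.
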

\begin{proof}
Let us consider the set of matrices $\calM_{\le r} \subset \spR^{L \times K}$ of rank not larger than $r$, and $\calM_{=r}$ the set of matrices of rank  $r$. Fix $L = r+1$, $K = N - L + 1 = N-r$.
Denote $\widehat \calD_r = \{\tsX\in \spR^N : \rank \calT_{r+1}(\tsX) \le r \} = \calT_{r+1}^{-1}(\calM_{\le r} \cap \calH) = \bigcup_{s=1}^r \calD_{s}$.
By definition,  $\calD_r = \{\tsX : \rank \calT_{r+1}(\tsX) = r \} = \calT_{r+1}^{-1}(\calM_{=r} \cap \calH)$.

It is known that $\overline{\calM_{=r}} = \calM_{\le r}$, see \cite{Lewis2008}. Thus, we have $\overline{\calD_r} = \overline{\calT_{r+1}^{-1}(\calM_{=r} \cap \calH)} = \calT_{r+1}^{-1}(\overline{\calM_{=r} \cap \calH}) \subset \calT_{r+1}^{-1}(\overline{\calM_{=r}} \cap \calH) = \calT_{r+1}^{-1}(\calM_{\le r} \cap \calH) = \widehat \calD_r$.

	To prove $\overline{\calD_r} = \widehat \calD_r$, we show that any $\tsS \in \widehat \calD_r$ can be approximated by a series $\tsX \in \calD_r$ with arbitrary precision. Let $\tilde{r} < r$ and $\tsS \in \calD_{\tilde{r}}$ satisfy a GLRR($\tilde{\bfa}$), $\tilde{\bfa} = (a_1, \ldots, a_{\tilde{r}+1})^\rmT \in \spR^{\tilde{r} + 1}$. It is sufficient to show that we can approximate $\tsS$ by $\tsX \in \calD_{\tilde{r} + 1}$; then we can obtain an approximating series from $\calD_r$ by subsequent approximations with ranks increased by 1.

Let us take such real $\mu$ that the series $\tsD=(\mu,\mu^2,\ldots,\mu^N)^\rmT$ of rank 1 is not governed by the GLRR($\tilde{\bfa}$). Denote $\bfd_M = (\mu,\mu^2,\ldots,\mu^M)^\rmT$; then $\tsD=\bfd_N$. For any real $\alpha \ne 0$, we have
	$\tsX(\alpha) = \tsS + \alpha \tsD \in \widehat \calD_{\tilde{r} + 1}$, since the series $\tsX(\alpha)$ is governed by the GLRR($\bfb$) with $\bfb = (\mu a_1, \mu a_2 - a_1, \mu a_3 - a_2, \ldots, \mu a_{\tilde{r}+1} - a_{\tilde{r}}, -a_{\tilde{r} + 1})^\rmT \in \spR^{\tilde{r} + 2}$. Thus, $\rank \tsX(\alpha) \le \tilde{r} +1$.
	
	Now let us show that $\rank \tsX(\alpha) \ge \tilde{r} +1$.
 We need to show that $\rank \calT_{\tilde{r} + 1}(\tsX(\alpha)) = \tilde{r}+1$ for any $\alpha \ne 0$.
 Due to \cite[Corollary 8.1]{Marsaglia1974}, it is enough to show that the column and row spaces of $\calT_{\tilde{r} +1}(\tsS)$ and $\calT_{\tilde{r} +1}(\alpha \tsD)$ have empty intersection.
 We know that $\colspace\left(\calT_{\tilde{r} +1}(\alpha \tsD)\right) = \sspan(\bfd_{\tilde{r} +1})$ and $\rowspace\left(\calT_{\tilde{r} +1}(\alpha \tsD)\right) = \sspan(\bfd_{N-\tilde{r}})$. Also, note that a vector $\bfv$ belongs to $\colspace\left({\calT_{\tilde{r} + 1}(\tsS)}\right)$ if and only if $\tilde{\bfa}^\rmT \bfv = 0$, and a vector $\bfu$ belongs to $\rowspace\left({\calT_{\tilde{r} + 1}(\tsS)}\right)$ if and only if
  $\left(\bfQ^{N-\tilde{r}, \tilde{r}}(\tilde{\bfa})\right)^\rmT \bfu= \bm{0}_{N-2\tilde{r}}$. However, by construction of $\tsD$, $\tilde{\bfa}^\rmT \bfd_{\tilde{r} +1} \ne 0$, and $\left(\bfQ^{N-\tilde{r}, \tilde{r}}(\tilde{\bfa})\right)^\rmT \bfd_{N-\tilde{r}}  \ne \bm{0}_{N-2\tilde{r}} $. Therefore, we have
 $\rowspace\left(\calT_{\tilde{r} + 1}(\tsS)\right) \cap \rowspace\left(\calT_{\tilde{r} + 1}(\alpha \tsD)\right) = \emptyset$ and $\colspace\left( \calT_{\tilde{r} + 1}(\tsS)\right) \cap \colspace\left(\calT_{\tilde{r} + 1}(\alpha \tsD)\right) = \emptyset$. The lemma is proved, since $\alpha$ can be an arbitrarily small positive number.
\end{proof}

    \section{Proofs of propositions from the paper}
    \subsection{Proof of Theorem~\ref{th:parametrization} and Proposition~\ref{prop:parametrization}}
    \label{sec:th:parametrization}

    \begin{proof}
    The first statement of Proposition~\ref{prop:parametrization} will provide the parameterizing mapping introduced in Theorem~\ref{th:parametrization}
    if we prove the correctness of \eqref{eq:param} and \eqref{eq:param_rev}, the uniqueness of $S_\tau$ satisfying relations of Theorem \ref{th:parametrization}, then prove that $S_\tau$ is an injective mapping and \eqref{eq:param_rev} defines the inverse of the mapping $S_\tau$ given in \eqref{eq:param}.

    Let us prove the correctness of \eqref{eq:param}.
    To begin with, we show that the matrix $\bfZ_{\row{\calI({\i0})}}$ is not singular and therefore invertible. This will be a consequence of non-singularity of $(\bfZ_0)_{\row{\calI({\i0})}}$ for any basis of $\calZ(\bfa_0)$.

    Let us represent $\bfa_0$ as $\bfa_0 = (0, \ldots, 0, b_{r_m +1 }, \ldots, b_{1}, 0, \ldots, 0)^\rmT\in \spR^{r+1}$, with $r_b$ zeroes at the beginning and $r_e$ zeroes at the end, $r_e + r_b + r_m = r$. Let us construct a matrix $\bfZ_0^\star= [\bfZ_\text{begin}: \bfZ_\text{middle}:\overline{} \bfZ_\text{end}]$ consisting of three blocks: $\bfZ_\text{begin} = \begin{pmatrix}
    \bfI_{r_b} \\
    \bm{0}_{(N - r_b)\times r_b}
    \end{pmatrix}$, $\bfZ_\text{middle} = \begin{pmatrix}
    \bm{0}_{r_b \times r_m} \\
    \widehat \bfZ_\text{middle} \\
    \bm{0}_{r_e \times r_m}
    \end{pmatrix} $, $\bfZ_\text{end} = \begin{pmatrix}
    \bm{0}_{(N - r_e)\times r_e} \\
    \bfI_{r_e}
    \end{pmatrix}$, where the columns of the matrix $\widehat \bfZ_\text{middle} \in \spR^{(N - r_b - r_e) \times r_m}$ form a basis of the space of time series of length $N - r_b - r_e$ governed by the LRR with coefficients $-b_2/b_1, \ldots, -b_{r_m +1 }/b_1$. Since $\{1, \ldots, r_b\} \cup \{N - {r_e}+1, \ldots, N\} \subset \calI(\i0)$ and any submatrix of size $r_m \times r_m$ of $\widehat \bfZ_\text{middle}$ is non-degenerate \cite[Prop. 2.3]{Usevich2010}, we obtain the non-degeneracy of $(\bfZ_0^\star)_{\row{\calI({\i0})}}$. Any other matrix which consists of basis vectors of $\calZ(\bfa_0)$ can be represented in the form $\bfZ_0^\star \bfP$ with a non-singular matrix $\bfP \in \spR^{r\times r}$. Therefore, matrix $(\bfZ_0^\star \bfP)_{\row{\calI({\i0})}}$ is also non-degenerate.

    Now let us prove the non-degeneracy of $\bfZ_{\row{\calI({\i0})}}$. Since $\calZ(\bfa)$ is the orthogonal complement to $\calQ(\bfa)$, $\Proj_{\calZ(\bfa)}$ can be represented as a continuous function $\Proj_{\calZ (\bfa)} = \bfI_N - \Proj_{\bfQ(\bfa)}$ of $\bfa$, $\bfa \ne \bm{0}_{r+1}$, where $\bfQ(\bfa)$ is defined in \eqref{op:Q}.
   Note that the determinant of $\bfZ_{\row{\calI({\i0})}}$ is a continuous function of $\bfZ$. In turn, $\bfZ$ continuously depends on $\Ai0$.
   Since $\bfZ(\bfa_0) = \bfZ_0$ and the determinant of $(\bfZ_0)_{\row{\calI({\i0})}}$ is non-zero, there is a neighborhood of $(\bfa_0)_{\calK(\i0)}$, such that
   the determinant of $\bfZ_{\row{\calI({\i0})}}$ is not zero; therefore, the matrix $\bfZ_{\row{\calI({\i0})}}$ is invertible.

    The constructed mapping \eqref{eq:param} does not depend on $\bfZ_0$. Indeed, for any non-singular matrix $\bfP\in \spR^{r\times r}$:
    $\left(\Proj_{\calZ (\bfa)} \bfZ_0 \bfP\right) \left((\Proj_{\calZ (\bfa)} \bfZ_0 \bfP)_{\row{\calI({\i0})}}\right)^{-1} = \bfZ \left(\bfZ_{\row{\calI({\i0})}}\right)^{-1}$.

	Let us demonstrate that the properties of $S_\tau$, which are stated in Theorem~\ref{th:parametrization}, are fulfilled; i.e., show that $\tsS \in \calD_r$, the series $\tsS$ satisfies the GLRR($\bfa$) and $(\tsS)_{\vecrow{\calI(\i0)}} = \Si0$. The series $\tsS$ satisfies the GLRR($\bfa$), since each column of the matrix $\bfZ$ satisfies the GLRR($\bfa$).
	To prove that $\tsS \in \calD_r$, consider the matrix $\calT_{r+1}(\tsS_0)$ and choose a submatrix of size $r \times r$ with
non-zero determinant. Then take the submatrix $\bfB$ of the matrix $\calT_{r+1}(S_\tau(\Si0, \Ai0))$ with the same location. Its determinant is
 a continuous function of $(\Si0, \Ai0)$, since the function given in \eqref{eq:param} is continuous. Therefore, there exists a neighborhood
 of $\left((\tsS_0)_{\calI(\i0)}, (\bfa_0)_{\calK(\i0)}\right)^\rmT$, where the determinant of $\bfB$ is non-zero; thus, $\tsS \in \calD_r$.
The condition $(\tsS)_{\vecrow{\calI(\i0)}} = \Si0$ is fulfilled, since
    \begin{equation*}
    (\tsS)_{\vecrow{\calI(\i0)}} =
    \left(\bfZ_{\row{\calI({\i0})}} \left(\bfZ_{\row{\calI({\i0})}}\right)^{-1}\right) \Si0 = \Si0.
    \end{equation*}

    Let us explain the uniqueness of the mapping $S_\tau$ satisfying the relations of Theorem~\ref{th:parametrization}.
    Let  $\widehat S$ be a different mapping satisfying the relations of Theorem~\ref{th:parametrization}, $\widehat \tsS = \widehat S(\Si0, \Ai0) \in \calD_r$. We know that $\widehat \tsS \in \calZ(\bfa)$. Therefore, columns of $\bfZ$ contain a basis of $\calZ(\bfa)$. Let $\widehat \tsS = \bfZ \bfv$ and $\bfv \in \spR^r$ be the coefficients of the expansion of $\widehat \tsS$ in the columns of $\bfZ$. Then the following is fulfilled: $(\bfZ \bfv)_{\calI({\i0})} = \Si0$. However, $\bfZ_{\row{\calI({\i0})}} \bfv = \Si0$ together with the invertibility of $\bfZ_{\row{\calI({\i0})}}$ leads to $\bfv =  \left(\bfZ_{\row{\calI({\i0})}}\right)^{-1} \Si0$. Therefore, $\widehat \tsS = \left(\bfZ \left(\bfZ_{\row{\calI({\i0})}}\right)^{-1}\right) \Si0 = \tsS$.

    Let us prove that $S_\tau$ is an injective mapping. We choose two different sets of parameters $\big(\Si0^{(1)}, \Ai0^{(1)}\big)^\rmT$, $\big(\Si0^{(2)}, \Ai0^{(2)}\big)^\rmT$ in the vicinity of $\left((\tsS_0)_{\calI(\i0)}, (\bfa_0)_{\calK(\i0)}\right)^\rmT$ and consider $\tsX_1 = S_\tau\big(\Si0^{(1)}, \Ai0^{(1)}\big)$, $\tsX_2 = S_\tau\big(\Si0^{(2)}, \Ai0^{(2)}\big)$. If $\Si0^{(1)} \ne \Si0^{(2)}$, then $\tsX_1 \ne \tsX_2$, since $(\tsX_1)_{\vecrow{\calI(\i0)}} \ne (\tsX_2)_{\vecrow{\calI(\i0)}}$. Let $\Si0^{(1)} = \Si0^{(2)}$ be fulfilled, but $\Ai0^{(1)} \ne \Ai0^{(2)}$. This means that the orthogonal complements $\sspan(\fullop(\Ai0^{(1)}))$ and $\sspan(\fullop(\Ai0^{(2)}))$ to $\colspace{\left(\calT_{r+1}(\tsX_1)\right)}$ and $\colspace{\left(\calT_{r+1}(\tsX_2)\right)}$ respectively are different and therefore these column spaces differs.
    Thus, $\tsX_1\ne \tsX_2$.

    Let us prove the correctness of \eqref{eq:param_rev}.
    According to the statement of Proposition~\ref{prop:parametrization}, $\Ai0$ defined in \eqref{eq:param_rev} is obtained from a renormalization of $\hat \bfa = \hat \bfa(\tsS)$ such that the $\i0$-th element becomes equal to $-1$.
    Let us prove the correctness of this definition of $\Ai0$, i.e., the possibility to renormalize $\hat \bfa$. Consider the matrix $\bfS = \calT_{r+1}(\tsS) \in \spR^{(r+1)\times (N-r)}$. Let $\calJ$ be a subset of indices such that the submatrix $(\bfS_0)_{\col{\calJ}}\in \spR^{(r+1)\times r}$ has rank $r$, where $\bfS_0 = \calT_{r+1}(\tsS_0)$. Then $\Proj_{\calL(\tsS)}$ can be represented as a continuous function $\Proj_{\calL(\tsS)} = \Proj_{\bfS_{\col{\calJ}}}$ in the vicinity of $\tsS_0$; therefore, we can choose a neighborhood of $\tsS_0$ in which $\hat a_\i0$ does not vanish.

    Let us explain that $\eqref{eq:param_rev}$ gives the inverse of the mapping $S_\tau$. Let $\tsS = S_\tau(\Si0, \Ai0)$. The values $\Si0 = (\tsS)_{\vecrow{\calI({\i0})}}$ are taken directly from the time series. The series $\tsS$ is governed by the GLRR($\hat \bfa$) since the vector $\hat \bfa$ is orthogonal to $\colspace(\calT_{r+1}(\tsS))$ by its definition. But the series $\tsS$ is governed by the GLRR($\bfa$); hence, $\bfa$ coincides with $\hat \bfa$ up to normalization. Therefore, renormalization of $\hat \bfa$ gives us the required $\Ai0$. This consideration concludes the proof.
    \end{proof}

    \subsection{Proof of Theorem \ref{th:param_smooth}}
    \label{sec:th:param_smooth}
    \begin{proof}
    	We need to show that $\Proj_{\calL(\tsS)}$ and $\Proj_{\calZ(\bfa)}$ from Proposition \ref{prop:parametrization} are smooth projections in the vicinity of $\bfS_0$ and $\bfZ_0$ respectively.
    	
    	Since $(\bfS_0)_{\col{\calJ}}$ has full rank, $\Proj_{\calL(\tsS)} = \bfS_{\col{\calJ}} \left(\left(\bfS_{\col{\calJ}}\right)^\rmT \bfS_{\col{\calJ}}\right)^{-1} \bfS^\rmT_{\col{\calJ}}$ is a smooth function in the vicinity of $\bfS_0$.
    	Since $\bfQ(\bfa)$ has full rank, see definition \eqref{op:Q}, $$\Proj_{\calZ(\bfa)} = \bfI_N - \bfQ(\bfa)\left(\bfQ^\rmT(\bfa) \bfQ(\bfa)\right)^{-1} \bfQ^\rmT(\bfa)$$ is smooth everywhere except $\bfa = \bfzero_{r+1}$.
    	
    	It is clearly seen that the other mappings involved in the parameterization are smooth in the corresponding vicinities.
    \end{proof}

\subsection{Proof of Theorem~\ref{th:tangent}}
\label{sec:th:tangent}

Let us start with two lemmas.
    It is convenient to separate the parameters ($2r$ arguments of the mapping $S_\tau$) into two parts, $\Si0$ and $\Ai0$.
    Then $\bfJ_{S_\tau} = [ \bfF_\bfs:\bfF_\bfa ]$, where $\bfF_\bfs = (\bfJ_{S_\tau})_{\col{\{1, \dots, r\}}}$, $\bfF_\bfa = (\bfJ_{S_\tau})_{\col{\{r+1, \dots, 2r\}}}$. Let $\bfa = \fullop(\Ai0)$.

    \begin{lemma}
    	\label{eqa:derivS}
    	$\bfQ^\rmT(\bfa) \bfF_\bfs = \bm{0}_{(N-r) \times r}$; $\colspace(\bfF_\bfs) = \calZ(\bfa)$.
    \end{lemma}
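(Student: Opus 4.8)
The plan is to read off both assertions directly from the explicit form of the parameterization supplied by Proposition~\ref{prop:parametrization}. There we have $S_\tau(\Si0, \Ai0) = \bfG\, \Si0$, where $\bfG = \bfZ \left(\bfZ_{\row{\calI({\i0})}}\right)^{-1}$ with $\bfZ = \Proj_{\calZ(\bfa)} \bfZ_0$ and $\bfa = \fullop(\Ai0)$. The crucial structural feature is that $\bfG$ depends only on the second block of parameters $\Ai0$ and not on $\Si0$, so the map is \emph{linear} in $\Si0$. First I would differentiate $\bfG\,\Si0$ with respect to $\Si0$ (holding $\Ai0$ fixed); since the expression is linear in $\Si0$ with constant coefficient matrix $\bfG$, the partial Jacobian is simply $\bfF_\bfs = \bfG = \bfZ \left(\bfZ_{\row{\calI({\i0})}}\right)^{-1}$.

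For the identity $\bfQ^\rmT(\bfa)\bfF_\bfs = \bfzero_{(N-r)\times r}$, I would use that $\calZ(\bfa)$ is by definition the left nullspace of $\bfQ(\bfa)$ (Section~\ref{subsec:subspace_approach}). Since the columns of $\bfZ = \Proj_{\calZ(\bfa)}\bfZ_0$ lie in $\calZ(\bfa)$, we get $\bfQ^\rmT(\bfa)\bfZ = \bfzero_{(N-r)\times r}$, and multiplying on the right by the $r\times r$ factor $\left(\bfZ_{\row{\calI({\i0})}}\right)^{-1}$ leaves this zero, giving $\bfQ^\rmT(\bfa)\bfF_\bfs = \bfzero_{(N-r)\times r}$.

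For the equality $\colspace(\bfF_\bfs) = \calZ(\bfa)$, I would invoke the non-degeneracy of $\bfZ_{\row{\calI({\i0})}}$ established in the proof of Proposition~\ref{prop:parametrization}: this makes $\left(\bfZ_{\row{\calI({\i0})}}\right)^{-1}$ an invertible $r\times r$ matrix, so $\bfF_\bfs$ and $\bfZ$ have the same column space and $\bfF_\bfs$ has full column rank $r$. Because $\bfQ(\bfa)$ has full rank $N-r$, its left nullspace $\calZ(\bfa)$ has dimension exactly $r$; hence the $r$ linearly independent columns of $\bfF_\bfs$, all lying in $\calZ(\bfa)$, must span it, yielding $\colspace(\bfF_\bfs) = \calZ(\bfa)$.

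There is no genuine analytic obstacle: once the linearity in $\Si0$ is observed, both claims are consequences of the rank and nullspace facts already secured in Proposition~\ref{prop:parametrization}. The only step deserving attention is confirming that $\bfF_\bfs$ has full rank $r$, which rests precisely on the invertibility of $\bfZ_{\row{\calI({\i0})}}$ proved there together with the dimension count $\dim\calZ(\bfa) = r$.
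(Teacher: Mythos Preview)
Your argument is correct. The paper's proof takes a marginally different route: rather than computing $\bfF_\bfs$ explicitly from the formula $S_\tau(\Si0,\Ai0)=\bfG\,\Si0$, it differentiates the defining constraint $\bfQ^\rmT(\bfa)\,S_\tau(\Si0,\Ai0)=\bfzero_{N-r}$ with respect to each coordinate of $\Si0$ to obtain $\bfQ^\rmT(\bfa)\bfF_\bfs=\bfzero$ directly, and then observes $(\bfF_\bfs)_{\row{\calI(\i0)}}=\bfI_r$ to conclude full rank. Your explicit computation $\bfF_\bfs=\bfG=\bfZ\left(\bfZ_{\row{\calI(\i0)}}\right)^{-1}$ yields the same facts (indeed $(\bfG)_{\row{\calI(\i0)}}=\bfI_r$ is exactly the paper's observation), so the two arguments are essentially equivalent; the paper's implicit-differentiation step avoids invoking the explicit form of $\bfG$, while your version makes the linear-algebra structure more transparent.
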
	
    \begin{proof}
    	Let $\bfF_\bfs = [F_{s,1}:\ldots:F_{s,r}]$. Consider the equality $\bfQ^\rmT(\bfa) S_\tau(\Si0, \Ai0) = \bfzero_{N-r}$ and differentiate it with respect to $(\Si0)_{\vecrow{(i)}}$. We obtain $\bfQ^\rmT(\bfa) F_{s, i} = \bfzero_{N-r}$, which means that $\colspace(\bfF_\bfs) \subset \calZ(\bfa)$.
    	The fact $(\bfF_{\bfs})_{\row{\calI({\i0})}} = \bfI_r$ completes the proof.
    \end{proof}

    \begin{lemma}
    	\label{eqa:derivA}
    	$\bfQ^\rmT(\bfa) \bfF_{\bfa} = \bfM$, where $\bfM = - (\bfS_{\row{\calK({\i0})}})^\rmT$ and $\bfS = \calT_{r+1}(\tsS)$;
    	$\colspace(\bfF_\bfa) \subset \calZ(\bfa^2)$.
    \end{lemma}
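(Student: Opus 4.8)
The plan is to differentiate the defining identity $\bfQ^\rmT(\bfa)\tsS=\bfzero_{N-r}$ with respect to the parameters $\Ai0$, exactly in the spirit of Lemma~\ref{eqa:derivS}, but now taking into account that $\bfa=\fullop(\Ai0)$ itself depends on those parameters (this dependence was absent when differentiating in the $\Si0$ directions). Writing $\bfF_\bfa=[F_{\bfa,1}:\ldots:F_{\bfa,r}]$, its $j$-th column is $F_{\bfa,j}=\partial\tsS/\partial(\Ai0)_j$, and the product rule applied to $\bfQ^\rmT(\bfa)\tsS=\bfzero_{N-r}$ gives
\begin{equation*}
\bfQ^\rmT(\bfa)F_{\bfa,j}=-\left(\frac{\partial}{\partial(\Ai0)_j}\bfQ^\rmT(\bfa)\right)\tsS.
\end{equation*}
Compared with Lemma~\ref{eqa:derivS}, the only genuinely new ingredient is the extra term coming from the $\bfa$-dependence of $\bfQ$.

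I would then evaluate that extra term. Since $\bfb\mapsto\bfQ^{N,r}(\bfb)$ is linear and $\fullop$ maps the $j$-th coordinate of $\Ai0$ to the entry $a_{k_j}$ of $\bfa$, where $k_j$ is the $j$-th element of $\calK(\i0)$ (the inserted $-1$ at position $\i0$ being constant), I get $\partial\bfQ(\bfa)/\partial(\Ai0)_j=\bfQ(\bfe_{k_j})$. Using the elementary rewriting $\bfQ^\rmT(\bfb)\tsS=\bfS^\rmT\bfb$, valid for every $\bfb\in\spR^{r+1}$ (it is just $\bfb^\rmT\calT_{r+1}(\tsS)$ transposed), the extra term equals $\bfQ^\rmT(\bfe_{k_j})\tsS=\bfS^\rmT\bfe_{k_j}=(\bfS_{\row{k_j}})^\rmT$, the transpose of the $k_j$-th row of $\bfS$. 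Collecting the columns for $j=1,\ldots,r$, i.e. with $k_j$ running through $\calK(\i0)$, yields the first assertion $\bfQ^\rmT(\bfa)\bfF_\bfa=-(\bfS_{\row{\calK(\i0)}})^\rmT$.

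For the inclusion $\colspace(\bfF_\bfa)\subset\calZ(\bfa^2)$ the key tool is a convolution identity for the banded operators: applying the relation twice equals applying the self-convolved relation once, $\bfQ^{N-r,r}(\bfa)^\rmT\,\bfQ^{N,r}(\bfa)^\rmT=\bfQ^{N,2r}(\bfa^2)^\rmT$, which follows by matching the coefficient $\sum_{l+k=m+2}a_l a_k$ of $v_{i+m}$ against the definition of $\bfa^2$. Granting it, the first part gives $\bfQ^\rmT(\bfa^2)\bfF_\bfa=\bfQ^{N-r,r}(\bfa)^\rmT\bfQ^\rmT(\bfa)\bfF_\bfa=-\bfQ^{N-r,r}(\bfa)^\rmT(\bfS_{\row{\calK(\i0)}})^\rmT$. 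It then remains to see that this vanishes: each row $\bfS_{\row{k_j}}$ is the contiguous window $(s_{k_j},\ldots,s_{k_j+N-r-1})$ of $\tsS$, so applying $\bfQ^{N-r,r}(\bfa)^\rmT$ produces the entries $\sum_{l=1}^{r+1}a_l s_{(k_j+i-1)+l-1}$, each of which is an entry of $\bfQ^\rmT(\bfa)\tsS=\bfzero_{N-r}$ provided the shift $k_j+i-1$ lies in $\{1,\ldots,N-r\}$. Hence $\bfQ^\rmT(\bfa^2)\bfF_\bfa=\bfzero$ and every column of $\bfF_\bfa$ lies in $\calZ(\bfa^2)$.

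The main obstacle is purely bookkeeping rather than conceptual: establishing the convolution identity and, most importantly, verifying the index range $1\le k_j+i-1\le N-r$ for all $k_j\in\calK(\i0)\subseteq\{1,\ldots,r+1\}$ and $i\in\{1,\ldots,N-2r\}$, which is exactly what guarantees that each windowed subseries inherits the full GLRR($\bfa$). Everything else reduces to the same differentiation as in Lemma~\ref{eqa:derivS} together with the linearity of the operator $\bfQ$.
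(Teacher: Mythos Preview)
Your proof is correct and follows essentially the same route as the paper: differentiate the identity $\bfQ^\rmT(\bfa)\tsS=\bfzero$ in the $\Ai0$-directions using the linearity of $\bfQ$ to get the first claim, then compose with $\bfQ^{N-r,r}(\bfa)^\rmT$ and use the convolution identity $\bfQ^{N-r,r}(\bfa)^\rmT\bfQ^{N,r}(\bfa)^\rmT=\bfQ^{N,2r}(\bfa^2)^\rmT$ for the second. Your treatment is in fact more explicit than the paper's: where the paper simply writes ``Due to the first statement, the equality $\left(\bfQ^{N-r, r}(\bfa)\right)^\rmT \left(\bfQ^{N, r}(\bfa)\right)^\rmT \bfF_{\bfa} = \bm{0}$ is valid'' and cites \cite{Usevich2017} for the convolution identity, you spell out the index-range verification that makes $\bfQ^{N-r,r}(\bfa)^\rmT(\bfS_{\row{\calK(\i0)}})^\rmT$ vanish, which is the step the paper leaves implicit.
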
	
    \begin{proof}
    	Let $\bfF_\bfa = [F_{a,1}:\ldots:F_{a,r}]$. Consider the equality $\bfQ^\rmT(\bfa) S_\tau(\Si0, \Ai0) = \bfzero_{N-r}$ and differentiate it with respect to $(\Ai0)_{\vecrow{(i)}}$, i.e. $i$-th element of $\bfa_{\calK({\i0})} = \Ai0$, $i = 1, \ldots, r$. Then we obtain $\bfQ^\rmT(\bfe_{j}) \tsS + \bfQ^\rmT(\bfa) F_{a, i} = \bfzero_{N-r}$, where $\bfe_j\in \spR^{r+1}$ and $j=(\calK({\i0}))_i$ is $i$-th element of $\calK({\i0})$. (Note that $\bfQ^\rmT(\bfe_{j}) \tsS$ is the $j$-th column of the transposed $(r+1)$-trajectory matrix $\bfS^\rmT$.) Therefore, the equation $\bfQ^\rmT(\bfa) \bfF_{\bfa} = - (\bfS_{\row{\calK({\i0})}})^\rmT$ is proved.
    	
    	To prove the second statement of the lemma, let us take the matrix $\bfQ^{N-r, r}(\bfa) \in \spR^{N \times (N - r)}$. Due to the first statement, the equality $\left(\bfQ^{N-r, r}(\bfa)\right)^\rmT \left(\bfQ^{N, r}(\bfa)\right)^\rmT \bfF_{\bfa} = \bm{0}_{(N-2r) \times r}$ is valid. From \cite[Sections 2.1 and 2.2]{Usevich2017} it follows that then we have $\left(\bfQ^{N-r, r}(\bfa)\right)^\rmT \left(\bfQ^{N, r}(\bfa)\right)^\rmT = \bfQ^\rmT(\bfa^2)$. Therefore, $\bfQ^\rmT(\bfa^2) \bfF_{\bfa} = \bm{0}_{(N-2r) \times r}$.
    \end{proof}

    Now we can prove Theorem \ref{th:tangent}.
    \begin{proof}
    	It follows from Lemma \ref{eqa:derivS} that
    	\begin{equation*}
    	\bfQ^\rmT(\bfa^2) \bfF_{S} = (\bfQ^{N-r, r}(\bfa))^\rmT \bfQ^\rmT(\bfa) \bfF_{S} = \bm{0}_{(N-2r) \times r}.
    	\end{equation*}
    	Therefore,  $\colspace(\bfJ_{S_\tau}) \subset \calZ(\bfa^2)$. Also, $\tsS \in \calZ(\bfa^2)$. Since we have a diffeomorphism at the point $\tsS$, the Jacobian matrix $\bfJ_{S_\tau}$ has full rank $2r$. Hence, $\colspace(\bfJ_{S_\tau}) = \calZ(\bfa^2)$.
    \end{proof}

     \subsection{Proof of Lemma \ref{lemma:minindr}}
    \label{sec:lemma:minindr}
    \begin{proof}
    	Assume the contrary. Denote $\tsS^\star = \tsS_0$ a point of global minimum in the problem \eqref{eq:wls} and assume that $\tsS_0 \in \calD_{r_0}$, $r_0 < r$, is such that $\tsS_0$ satisfies a GLRR($\bfa_0$), $\bfa_0 = (a_1, \ldots, a_{r_0+1})^\rmT \in \spR^{r_0 + 1}$. Construct $N$ linearly independent exponential series $\tsS^{(i)}$ of length $N$, $\tsS^{(i)} = (e^{\lambda_i}, e^{2 \lambda_i}, \ldots, e^{N \lambda_i})^\rmT$, which are governed by the GLRR($\bfa^{(i)}$) with $\bfa^{(i)} = (e^{\lambda_i}, -1)$, $i=1,\ldots,N$, where all $\lambda_i$ are different. Then for any real $\alpha$ we have
    	$\tsS_0 + \alpha \tsS^{(i)} \in \overline{\calD_r}$ since the series $\tsS_0 + \alpha \tsS^{(i)}$ is governed by the GLRR($\bfb_i$) with $\bfb_i = (e^{\lambda_i} a_1, e^{\lambda_i} a_2 - a_1, e^{\lambda_i} a_3 - a_2, \ldots, e^{\lambda_i} a_{r_0+1} - a_{r_0}, -a_{r_0 + 1})^\rmT \in \spR^{r_0 + 2}$.
    	
    	Denote $\langle \tsZ, \tsY \rangle_{\Sigminus} = \tsZ^\rmT \Sigminus \tsY$ the weighted inner product  corresponding to the norm $\| \cdot \|_{\Sigminus}$. By the condition of the lemma, $\tsX - \tsS_0 \ne \bm{0}_N$. Consider the inner products $\langle \tsX - \tsS_0, \tsS^{(i)} \rangle_{\Sigminus}$, $i = 1, 2, \ldots, N$. Since $\tsS^{(i)}$, $i=1,\ldots,N$, form a basis of $\spR^N$, there exists an index $j$ such that $\langle \tsX - \tsS_0, \tsS^{(j)} \rangle_{\Sigminus} \ne 0$. Let us take $\tsS_1 = \tsS_0 + \frac{\langle \tsX - \tsS_0, \tsS^{(j)} \rangle_{\Sigminus}}{\langle \tsS^{(j)}, \tsS^{(j)} \rangle_{\Sigminus}} \tsS^{(j)}$ governed by the GLRR($\bfb_i$) (hence, $\tsS_1$ belongs to $\overline{\calD_r}$), and show that $\|\tsX - \tsS_1\|_{\Sigminus} < \|\tsX - \tsS_0\|_{\Sigminus}$. Indeed,
    	\begin{equation*}
    	\langle \tsX - \tsS_0, \tsX - \tsS_0  \rangle_{\Sigminus} - \langle \tsX - \tsS_1, \tsX - \tsS_1  \rangle_{\Sigminus} = \frac{\left( \langle \tsX - \tsS_0, \tsS^{(j)} \rangle_{\Sigminus} \right)^2}{\langle \tsS^{(j)}, \tsS^{(j)} \rangle_{\Sigminus}}>0.
    	\end{equation*}
    	We obtain a contradiction with the initial suggestion that $\tsS_0 = \tsS^\star$ is a point of global minimum in the problem \eqref{eq:wls}.
    \end{proof}

\subsection{Proof of Theorem~\ref{th:equivalency}}
\label{sec:th:equivalency}
Let us fix the iteration number $k$. 
Denote by $\bfF_\bfs = \left(\bfJ_{S_\tau} \right)_{\col{\{1, \ldots, r\}}}$ the first $r$ columns of the Jacobian matrix $\bfJ_{S_\tau} = \bfJ_{S}(\Si0^{(k)}, \Ai0^{(k)})$, and by $\bfF_\bfa = \left(\bfJ_{S_\tau} \right)_{\col{\{r+1, \ldots, 2r\}}}$ the last $r$ columns of $\bfJ_{S_\tau}$.

\begin{proof}
Let us rewrite the weighted pseudoinverse in the \eqref{eq:iterGNA} as
\begin{equation*}
	\left( \winverse{\bfJ_{S_{\tau}}(\Si0^{(k)}, \Ai0^{(k)})}{\bfW} \big(\tsX - S_{\tau}^\star(\Ai0^{(k)})\big) \right) _{\col{\{r+1, \ldots, 2r\}}}
	= \left( \inverse{\left(\bfW^{1/2} \bfJ_{S_{\tau}}(\Si0^{(k)}, \Ai0^{(k)})\right)} \bfW^{1/2}\big(\tsX - S_{\tau}^\star(\Ai0^{(k)})\big) \right) _{\col{\{r+1, \ldots, 2r\}}}.
\end{equation*}	
  Applying the Frisch-Waugh-Lovell theorem \cite{Lovell2008simple} about the partitioned regression to the obtained pseudoinverse for regressors $\bfW^{1/2} \bfF_{\bfs}$ and $\bfW^{1/2} \bfF_{\bfa}$, we get the following sequence of equalities:
\begin{multline*}
\left( \inverse{\left(\bfW^{1/2} \bfJ_{S_{\tau}}(\Si0^{(k)}, \Ai0^{(k)})\right)} \bfW^{1/2}\big(\tsX - S_{\tau}^\star(\Ai0^{(k)})\big) \right) _{\col{\{r+1, \ldots, 2r\}}} = \\
\inverse{\left( (\bfI_N - \Proj_{\bfW^{1/2} \bfF_{\bfs}})\bfW^{1/2} \bfF_{\bfa} \right)}(\bfI_N - \Proj_{\bfW^{1/2} \bfF_{\bfs}}) \bfW^{1/2} (\tsX - S_{\tau}^\star(\Ai0^{(k)})) = \\
\winverse{(\bfI_N - \Proj_{\calZ(\fullop(\Ai0^{(k)})), \bfW})\bfF_{\bfa}}{\bfW}(\bfI_N - \Proj_{\calZ(\fullop(\Ai0^{(k)})), \bfW}) (\tsX - \Proj_{\calZ(\fullop(\Ai0^{(k)})), \bfW}(\tsX)).
\end{multline*}	
Since $\bfI_N - \Proj_{\calZ(\fullop(\Ai0^{(k)})), \bfW}$ is a projector, $(\bfI_N - \Proj_{\calZ(\fullop(\Ai0^{(k)})), \bfW})^2 = \bfI_N - \Proj_{\fullop(\calZ(\bfa^{(k)})), \bfW}$. Thus, we obtain the following iteration equivalent to \eqref{eq:iterGNA}:
\begin{equation}
\Ai0^{(k+1)}
= \Ai0^{(k)} + \gamma \winverse{(\bfI_N - \Proj_{\calZ(\fullop(\Ai0^{(k)})), \bfW})\bfF_{\bfa}}{\bfW}(\bfI_N - \Proj_{\calZ(\fullop(\Ai0^{(k)})), \bfW})\tsX.
\end{equation}
By Lemma~\ref{eqa:derivA}, $\bfQ^\rmT(\fullop(\Ai0^{(k)})) \bfF_\bfa = \bfM$. By the theorem's conditions, $\bfQ^\rmT(\fullop(\Ai0^{(k)})) \widehat \bfF_\bfa = \bfM$. Thus, $\bfQ^\rmT(\fullop(\Ai0^{(k)}))(\bfF_\bfa - \widehat \bfF_\bfa) = \bfzero_{(N-r)\times r}$. Since $\calQ(\bfa)$ is the orthogonal complement to $\calZ(\bfa)$, $(\bfI_N - \Proj_{\calZ(\fullop(\Ai0^{(k)})), \bfW})(\bfF_\bfa - \widehat \bfF_\bfa) = \bfzero_{N\times r}$, which finishes the proof.
\end{proof}


\subsection{Proof of Theorem~\ref{th:gamma}}
\label{sec:th:gamma}
\begin{proof}

Denote by $\upangle{x}{y}$ the angle between two points on the complex unit circle $\spT$, $0 \le \upangle{x}{y} \le \pi$. Let us prove the first statement. Consider a root $z_1 \in \spT$ of multiplicity $t_1$, $t_1\le t$, of the polynomial $g_\bfa(z)$; then for any $\alpha$ we have $\min_{w \in \calW(\alpha)} \upangle{w}{z_1} \le \frac{\pi}{N}$ by the Dirichlet principle. Let us fix any $0 \le \alpha_0 < 2 \pi$ and choose $w_0 = \argmin_{w \in \calW(\alpha_0)} \upangle{w}{z_1}$. Since $|z_1 - w_0|=O(1/N)$, we have $|\lambda_\text{min}(\alpha)| \le |g_\bfa(w_0)| = O(N^{-t})$.
	
	To prove the second statement, let us find any point $x \in \spT$ for which $|g_\bfa(x)| = \max_{z \in \spT} |g_\bfa(z)| > 0$ is fulfilled. Again, by the Dirichlet principle, we have $\min_{w \in \calW(\alpha)} \upangle{w}{x} \le \frac{\pi}{N}$ for any $\alpha$. Let us choose $w_1 = \argmin_{w \in \calW(\alpha_0)} \upangle{w}{x}$. Since $|x - w_1|=O(1/N)$ and $g_\bfa(z)$ is continuous, we have $|\lambda_\text{max}(\alpha)| \ge |g_\bfa(w_1)| = \Omega(1)$, which with $|\lambda_\text{max}(\alpha)| = O(1)$ proves the second part.
	
	To prove the third statement, let us construct a piecewise approximation of $g_\bfa(z)$ in $z$. Consider the decomposition $g_\bfa(z) = p_\bfa(z) q_\bfa(z)$, where the roots of $p_\bfa(z)$ belong to $\spT$ while the roots of $q_\bfa(z)$ do not. By construction, $\inf_{z \in \spT} |q_\bfa(z)| > 0$.
	
	Let $z_1, \ldots, z_k$ be the roots of $p_\bfa(z)$ with multiplicities $t_1, \ldots, t_k$. We split the circle $\spT$ into $k$ semi-open non-intersecting arcs $\calS_1, \ldots, \calS_k$, $\spT = \bigcup_{1 \le i \le k} \calS_i$, such that $z_i \in \calS_i$ for any $i$ and $z_j \notin \overline{\calS_i}$ for any $j \ne i$ ($\overline{\calS_i}$ denotes the closure of $\calS_i$), which leads to $\inf_{z \in \calS_i} \left| {p_\bfa(z)}/{(z-z_i)^{t_i}} \right| > 0 $.
	
	To finish the proof we need to show that there exists $0 \le \alpha = \alpha(N) < 2 \pi$
	such that we have $\min\limits_{w \in \calW(\alpha(N)), \; 1 \le i \le k} \upangle{w}{z_i} = \Theta(1/N)$.
	Let us denote for $0 \le \mu < \pi / N$ and $z \in \spT$
	$$\calB_{z, \mu} = \{0 \le \alpha < 2 \pi : \min_{w \in \calW(\alpha)} \upangle{w}{z} \le \mu \}.$$
	The set $\calB_{z, \mu}$ has the explicit form:
	$$\calB_{z, \mu} = \bigcup_{0 \le j \le N-1} \Big\{ \Arg \left( \exp \left( \unit \left( \frac{2 \pi j}{N} + y \right)  / z \right) \Big|_{-\mu \le y \le \mu} \right) \Big\}.$$

	Let us comment this expression. Consider $\omega_j^{(\alpha)} = \exp \left(\unit \left(\frac{2 \pi j}{N} - \alpha \right) \right)$ and choose $\alpha_j$ such that $\upangle{\omega_j^{(\alpha_j)}}{z} \le \mu$. This means that the polar angle of the ratio $\omega_j^{(\alpha_j)}/z$ belongs to the interval $[-\mu, \mu]$, i.e. $\omega_j^{(\alpha_j)}/z \in \{ \exp \left( \unit x \right) |_{-\mu \le x \le \mu} \}$. Let us perform equivalent transformations:
$$\exp \left(\unit \left({2 \pi j}/{N} - \alpha_j \right) \right) \in \{ z \exp \left( \unit x \right) |_{-\mu \le x \le \mu} \},$$
 $$\exp \left(\unit \left(\alpha_j - {2 \pi j}/{N} \right) \right) \in \{ \exp \left( \unit y \right) / z |_{-\mu \le y \le \mu} \},$$
  where $y = -x$. Finally, note that $\alpha_j \in  \calM_j = \left\{ \Arg (\exp \left( \unit ({2 \pi j}/{N} + y) \right) / z ) |_{-\mu \le y \le \mu} \right\}$. The inequality $\min_{w \in \calW(\alpha)} \upangle{w}{z} \le \mu$ is valid if $\alpha$ is equal to one of $\alpha_0, \ldots, \alpha_j$.
Therefore, the union of all such sets $\calM_j$ for $j = 0, \ldots, N-1$ gives us $\calB_{z, \mu}$.
	
	The Lebesgue measure of $\calB_{z, \mu}$ is equal to $\mes \calB_{z, \mu} = 2 \mu N$ for $\mu < \pi/N$.
	Let us take $\mu = \frac{\pi}{2Nk}$ and consider $\calB = \bigcup_{1 \le i \le k} \calB_{\mu, z_i}$.
	Since $\mes \calB \le \pi$, we obtain $\mes \widehat \calB \ge \pi$ for $\widehat \calB = [0; 2 \pi) \setminus \calB$,
	which means that $\widehat \calB$ is not the empty set.
	Thus, we have proved that for any $\alpha \in \widehat \calB$
	$$\min_{w \in \calW(\alpha), \; 1 \le i \le k} \upangle{w}{z_i} > \frac{\pi}{2Nk}.$$
	
	Let us fix an arbitrary $\alpha_0 \in \widehat \calB$ and consider any $w \in \calW(\alpha_0)$. 
	For each $i$ such that $w \in \calS_i$, $|w - z_i| = \Theta(1/N)$.
	Then $|g_\bfa(w)| = |q_\bfa(w)| \left| \displaystyle{\frac{p_\bfa(w)}{(w-z_i)^{t_i}}} \right| |(w-z_i)^{t_i}| \ge C \Theta(N^{-t_i})$,
	where $C > 0$ is some constant.
\end{proof}

\section{Details of algorithms}
\subsection{Formulas for calculation of the iteration step \eqref{eq:gauss_simple} in VPGN} \label{sec:MUdetails}
An explicit form of the step \eqref{eq:gauss_simple} is contained in \cite[Proposition 3]{Usevich2014}.
Here we write down the formulas in our notation and also present a new form for the Jacobian $\bfJ_{\tsS_\tau^\star}$, which is more convenient for implementation.
\begin{lemma}
	\label{th:varproj}
	Let $\bfW$ be positive definite. The projection $\Proj_{\calZ(\bfa), \bfW}$ can be calculated as
	\begin{equation} \label{eq:spZaa_kostya}
	\Proj_{\calZ(\bfa), \bfW} \tsX = \left( \bfI_N - \bfW^{-1} \bfQ(\bfa) \bm\Gamma^{-1}(\bfa) \bfQ^\rmT(\bfa) \right) \tsX,
	\end{equation}
	where $\bm\Gamma(\bfa) = \bfQ^\rmT(\bfa) \Sigminus^{-1} \bfQ(\bfa)$.\\
	The columns of $\bfJ_{\tsS_\tau^\star}$ has the form
	\begin{equation} 	\label{eq:vpformulaa}
	(\bfJ_{\tsS_\tau^\star})_{\col{i}} = -\bfW^{-1} \bfQ(\bfa) \bm\Gamma^{-1}(\bfa) \bfQ^\rmT(\bfe_j) \Proj_{\calZ(\bfa), \bfW} \tsX
    -\Proj_{\calZ(\bfa), \bfW} \bfW^{-1} \bfQ(\bfe_j) \bm\Gamma^{-1}(\bfa) \bfQ^\rmT(\bfa) \tsX,
	\end{equation}
	where $\bfa = \fullop(\Ai0)$ and $j = (\calK({\i0}))_i$ is $i$-th element of $\calK({\i0})$.
	
\end{lemma}
\begin{proof}
    The equality
	\begin{equation*}
	S_\tau^\star(\Ai0) = \Proj_{\calZ(\fullop(\Ai0)), \bfW}(\tsX)
	\end{equation*}
	corresponds to the solution of the following quadratic problem:
	\begin{equation}
    \label{eq:lin_constraint}
	S_\tau^\star(\Ai0) = \argmin_{\substack{\tsY: \  \bfQ^\rmT(\bfa) \tsY=0}} \left(\frac{1}{2} \tsY^\rmT \bfW\tsY - \tsY \bfW \tsX \right).
	\end{equation}
    The problem \eqref{eq:lin_constraint} is the equality-constrained quadratic optimization problem, which can be written as a linear system \cite[Section 16.1]{nocedal2006numerical}. The Schur-complement method described in \cite[Section 16.2]{nocedal2006numerical} provides the expression \eqref{eq:spZaa_kostya} after substituting the corresponding notation.
	
	Proof of equality \eqref{eq:vpformulaa} is done by taking derivatives of \eqref{eq:spZaa_kostya} with respect to $a_j$:
	\begin{multline*}
	(\Proj_{\calZ(\bfa), \bfW} \tsX)'_{a_j} = - \bfW^{-1} \bfQ(\bfe_j) \left(\bfQ^\rmT(\bfa) \bfW^{-1} \bfQ(\bfa)\right)^{-1} \bfQ^\rmT(\bfa) \tsX \\
	-\bfW^{-1} \bfQ(\bfa) \left(\bfQ^\rmT(\bfa) \bfW^{-1} \bfQ(\bfa)\right)^{-1} \bfQ^\rmT(\bfe_j) \tsX \\+  \bfW^{-1} \bfQ(\bfa) \left(\bfQ^\rmT(\bfa) \bfW^{-1} \bfQ(\bfa)\right)^{-1}  \\
\times \left(\bfQ^\rmT(\bfe_j) \bfW^{-1} \bfQ(\bfa) + \bfQ^\rmT(\bfa) \bfW^{-1} \bfQ(\bfe_j) \right) \left(\bfQ^\rmT(\bfa) \bfW^{-1} \bfQ(\bfa)\right)^{-1} \\ \times \bfQ^\rmT(\bfa) \tsX
	=-\bfW^{-1} \bfQ(\bfa) \left(\bfQ^\rmT(\bfa) \bfW^{-1} \bfQ(\bfa)\right)^{-1} \bfQ^\rmT(\bfe_j) \Proj_{\calZ(\bfa), \bfW} \tsX \\
    -\Proj_{\calZ(\bfa), \bfW} \bfW^{-1} \bfQ(\bfe_j) \left(\bfQ^\rmT(\bfa) \bfW^{-1} \bfQ(\bfa)\right)^{-1} \bfQ^\rmT(\bfa) \tsX.
	\end{multline*}
\end{proof}

\subsection{The compensated Horner scheme for calculation of polynomials in Algorithms~\ref{alg:fourier_basis_A} and \ref{alg:fourier_grad}}
\label{subsec:hornerscheme}
The Horner scheme is an algorithm for evaluating univariate polynomials in floating point arithmetic. The accuracy of the compensated Horner scheme \cite[Algorithm \code{CompHorner}]{Graillat2008} is similar to the one given by the Horner scheme computed in twice the working precision. Let us describe how the Horner scheme (we will consider its compensated version) can be used for calculating the basis of $\bfZ(\bfa)$ and the matrix $\widehat \bfF_{\bfa}$ with improved accuracy.

The Horner scheme can be directly applied in Algorithms \ref{alg:fourier_basis_A} and \ref{alg:fourier_grad} for calculating the polynomial $g_\bfa$.
Moreover, the Horner scheme can improve the accuracy of the calculation of $\bfU_r$ at step 4 of Algorithm~\ref{alg:fourier_basis_A}; this improvement is important if $\bfL_r$
is ill-conditioned.

To use the advantage of the Horner scheme, let us consider a new way of calculating the matrix $\bfU_r$.
Let $\bfO_r$ be such that $\bfL_r \bfO_r$ consists of orthonormal columns; $\bfO_r$ can be found by either the QR factorization or the SVD.
Then $\bfU_r = \bfA_g^{-1} (\bfR_r \bfO_r)$, where the matrix $\bfR_r$ is calculated at step 3 of Algorithm~\ref{alg:fourier_basis_A}.
Since $(\bfR_r)_{\row{k}} = \left(\exp\big(\frac{\unit 2 \pi r k}{N}\big), \exp\big(\frac{\unit 2 \pi (r-1) k}{N}\big), \ldots, \exp\big(\frac{\unit 2 \pi k}{N}\big)\right)$,
we can reduce the multiplication of $\bfR_r$ by a vector to the calculation of a polynomial of degree $r$ at the point $\exp\big(\frac{\unit 2 \pi k}{N}\big)$.
Therefore, we can accurately calculate the multiplication of $\bfR_r$ by a vector with the help of the Horner scheme. In particular, $\bfR_r \bfO_r$ can be calculated in this way.

\begin{algorithm}
	\caption{Calculation of the basis of $\calZ(\bfa)$ using the Compensated Horner Scheme}
	\label{alga:fourier_basis_A_comp}
    \Input{$\bfa \in \spR^r$.}
	\begin{algorithmic}[1]
		\State Compute $\alpha_0$ and $\bfA_g$ in the same way as at steps 1 and 2 of Algorithm~\ref{alg:fourier_basis_A} except for the use
of the algorithm \code{CompHorner} for calculation of values of the polynomials $g_\bfa$.
		\State Compute $\bfL_r$ and $\bfR_r$ in the same way as at step 3 of Algorithm~\ref{alg:fourier_basis_A}.
		\State Compute $\bfU_r$ in a new manner: find $\bfO_r$ such that $\bfL_r \bfO_r$ consists of orthonormal columns;
calculate $\bfB=\bfR_r \bfO_r$ by means of the algorithm \code{CompHorner}; calculate $\bfU_r = \bfA_g^{-1} (\bfB)$ directly by matrix multiplication.
\State\Return Matrix $\bfZ$, which is calculated in the same way as at steps 5 and 6 of Algorithm~\ref{alg:fourier_basis_A}, $\alpha_0$, $\bfA_g$.
	\end{algorithmic}
\end{algorithm}

Algorithm~\ref{alga:fourier_basis_A_comp} is a stable analogue of Algorithm~\ref{alg:fourier_basis_A}.
The stable version of Algorithm~\ref{alg:fourier_grad} differs by the change of the first step
``Compute $\alpha_0$, $\bfA_g$ using Algorithm \ref{alg:fourier_basis_A}'' to
``Compute $\alpha_0$, $\bfA_g$ using Algorithm \ref{alga:fourier_basis_A_comp}''.

\subsection{Computational details of the numerical example from Section~\ref{subsec:basisacc}}
\label{sec:alg_details}

\paragraph{Construction of the example} In Section~\ref{subsec:basisacc}, the example of an appropriate time series is theoretically constructed.
In practice, we should generate the time series $\tsX_N=(x_1,\ldots,x_N)^\rmT$ from this example with high numerical precision which is enough for comparing
the algorithms, which solve the problem \eqref{eq:wls}, by their accuracy.
The main difficulty lies in calculating the projection $\Proj_{\calZ(\bfa_0^2), \Sigminus}$.
The GLRR($\bfa_0$) with $\bfa_0 = (1, -3, 3, -1)^\rmT$ corresponds to the characteristic polynomial $g_{\bfa_0}(t)=(t-1)^3$ with the coefficients taken from $\bfa_0$.
Therefore, the GLRR($\bfa_0^2$) corresponds to the characteristic polynomial
$g^2_{\bfa_0}(t)=(t-1)^6$ and a basis of $\calZ(\bfa_0^2)$ consists of polynomials of degree not greater than $5$.
To obtain
the projection, we use Legendre polynomials \cite{Belousov2014} of degree from $0$ to $5$, which are calculated at the points $x_i$ as a basis of $\calZ(\bfa_0^2)$.
Then the constructed basis is orthogonalized.

\paragraph{Line search and stopping criteria} Let us provide details concerning the implementation of the line search at step 8 and the stopping criterion in Algorithms \ref{alg:gauss_newton_vp} and  \ref{alg:gauss_newton_our}.
We implemented the backtracking line search method \cite[Section 3.1]{nocedal2006numerical} in the direction $\Delta_k$ starting from the step size $\gamma = 1$ (the full step) and then dividing $\gamma$ by 2. The backtracking stops when
\begin{equation}
\label{eq:searchstop}
\|\tsX - \tsS^\star(\Ai0^{(k)} + \gamma \Delta_k) \|_{\bfW} \le \|\tsX - \tsS^\star(\Ai0^{(k)}) \|_{\bfW};
\end{equation}
 then $\gamma_k = \gamma$. If there is no such $\gamma$ for $\gamma = 1, 1/2, 1/4, \ldots, 2^{-16}$, then we set $\gamma_k = 0$.
  The stopping criterion of the whole algorithm is the equality $\gamma_k = 0$, which means that the current iteration can not improve the approximation to the solution. However, this approach has the following issue. Let for $\gamma = 1$, which corresponds to the full Gauss-Newton step, the relative change be very small, e.g.,
\begin{equation}\label{eq:searchthr}
\left\Vert \frac{\tsS^\star(\Ai0^{(k)} + \Delta_k) - \tsS^\star(\Ai0^{(k)})}{\tsS^\star(\Ai0^{(k)})} \right\Vert < \zeta,
\end{equation}
where $\zeta$ has the order of a square root of machine epsilon ($\zeta = 5\cdot 10^{-8}$ in the numerical experiments).  Then the backtracking line search with the stopping rule \eqref{eq:searchstop}
is unstable due to a poor accuracy of calculating the objective function $\|\tsX - \tsS^\star(\Ai0^{(k)} + \gamma \Delta_k) \|^2_{\bfW}$, which is caused by the calculation of ill-conditioned inner products.

Let us modify the line search in the direction $\Delta_k$ for the case when the condition \eqref{eq:searchthr} is valid.
Both MGN and VPGN methods can be considered in two ways, as iterations of the parameters $\Ai0^{(k)}$ and as iterations of the series $\tsS^\star(\Ai0^{(k)})$.
When \eqref{eq:searchthr} is fulfilled at the $k$-th iteration step, we do not realize the backtracking line search; instead, we make a choice between two step sizes: $\gamma_k=1$ (the full step) or $\gamma_k=0$ (which stops the whole algorithm), where the choice is performed with the help of the vectors of parameters.
Denote the difference between the vectors of parameters at adjacent iterations as  $\widetilde \Delta_k = \Ai0^{(k+1)} - \Ai0^{(k)}$, which coincides with the direction vector $\Delta_k$ when $\gamma_k = 1$ according to step 8 of the algorithms.
 Let \eqref{eq:searchthr} be fulfilled.
 If $k=0$, we perform the full step with $\gamma_k=1$.
 Otherwise, we compare $\| \widetilde  \Delta_{k} \|$ and $\| \widetilde  \Delta_{k-1} \|$. If $\| \widetilde  \Delta_{k} \| < \| \widetilde  \Delta_{k-1} \|$,
 then we set $\gamma_k = 1$; otherwise we put $\gamma_k = 0$ and stop the algorithm.
 Thus, we propose a combination of the line search at step 7 and the algorithm stopping criterion with improved accuracy and stability.

\end{document}